\providecommand{\U}[1]{\protect\rule{.1in}{.1in}}
\newtheorem{theorem}{Theorem}
\newtheorem{corollary}[theorem]{Corollary}
\newtheorem{definition}[theorem]{Definition}
\newtheorem{example}[theorem]{Example}
\newtheorem{lemma}[theorem]{Lemma}
\newtheorem{proposition}[theorem]{Proposition}
\newtheorem{remark}[theorem]{Remark}
\thanks{}
\email{lvitagliano@unisa.it}
\begin{document}
\title{On The Strong Homotopy Associative Algebra of a Foliation}
\author{\textsc{{Luca Vitagliano}}}
\address{DipMat, Universit\`a degli Studi di Salerno, \& Istituto Nazionale di Fisica
Nucleare, GC Salerno, Via Ponte don Melillo, 84084 Fisciano (SA), Italy.}

\begin{abstract}
An involutive distribution $C$ on a smooth manifold $M$ is a Lie-algebroid
acting on sections of the normal bundle $TM / C$. It is known that the
Chevalley-Eilenberg complex associated to this representation of $C$ possesses
the structure $\mathbb{X}$ of a strong homotopy Lie-Rinehart algebra. It is natural to
interpret $\mathbb{X}$ as the (derived) Lie-Rinehart algebra of vector fields on the
space $\boldsymbol{P}$ of integral manifolds of $C$. In this paper, I show that
$\mathbb{X}$ is embedded in an $A_{\infty}$-algebra $\mathbb{D}$ of (normal)
differential operators. It is natural to interpret $\mathbb{D}$ as the
(derived) associative algebra of differential operators on $\boldsymbol{P}$.
Finally, I speculate about the interpretation of $\mathbb{D}$ as the
\emph{universal enveloping strong homotopy algebra} of $\mathbb{X}$.
\end{abstract}

\maketitle

\section*{Introduction}

Let $M$ be a finite dimensional smooth manifold and $C$ an involutive
distribution on it. In view of Fr\"{o}benius theorem the datum of $C$ is
equivalent to the datum of a foliation of $M$. The pair $(M,C)$ is a finite
dimensional instance of a \emph{diffiety }(or a $D$\emph{-scheme}, in the
algebraic geometry language) which is a geometric object formalizing the
concept of \emph{partial differential equation}. There is a rich cohomological
calculus, sometimes called \emph{secondary calculus} \cite{v84,v98,v01},
associated to a diffiety $(M,C)$. Secondary calculus may be interpreted to
some extent as a differential calculus on the space of integral manifolds of
$C$. All constructions of standard calculus on manifolds (vector fields,
differential forms, differential operators, etc.) have a secondary analogue,
i.e., a formal analogue within secondary calculus. For instance,
\emph{secondary functions} are characteristic cohomologies of $C$,
\emph{secondary vector fields} are characteristic cohomologies with local
coefficients in normal vector fields, etc. (see the first part of \cite{v09}
for a compact review of \emph{secondary Cartan calculus}). In \cite{vi12} I
speculated that secondary calculus is actually a \emph{derived differential
calculus} in the sense that \textquotedblleft\emph{all secondary constructions
come from suitable algebraic structures up to homotopy at the level of
(characteristic) cochains}\textquotedblright. As a fundamental motivation
behind this conjecture, I discussed in \cite{vi12} the strong homotopy
Lie-Rinehart algebra of secondary vector fields.

This is a companion paper of \cite{vi12}. Here, I present a further motivation
behind the above mentioned conjecture: the $A_{\infty}$-algebra of secondary
(linear, scalar) differential operators. The main technical tools to show the
existence of such $A_{\infty}$-algebra are homological perturbations and
homotopy transfer. The strategy of the proof is the following. Let
$\mathcal{D}(\overline{\Lambda})$ be the associative differential graded (DG) algebra of
differential operators on longitudial differential forms $\overline{\Lambda}$ (i.e., differential
forms along $C$). It projects naturally onto the DG module $\overline{\Lambda
}\otimes\overline{\mathcal{D}}$ of $\overline{\Lambda}$-valued differential
operators on $C^{\infty}(M)$, normal to $C$. Actually, there are
\emph{contraction data} for $\mathcal{D}(\overline{\Lambda})$ over
$\overline{\Lambda}\otimes\overline{\mathcal{D}}$ (see Subsection
\ref{SecHPHT} for the definition of contraction data). The latter allow to
induce an $A_{\infty}$-algebra structure on $\overline{\Lambda}\otimes
\overline{\mathcal{D}}$ from the DG algebra structure on $\mathcal{D}%
(\overline{\Lambda})$. Suitable contraction data can be constructed using
purely geometric (supplementary) data as follows. First construct
\emph{Poincar\'{e}-Birkhoff-Witt} (\emph{PBW}) type isomorphisms
$\mathcal{D}(\overline{\Lambda})\approx S^{\bullet}\mathrm{Der}\overline
{\Lambda}$ and $\overline{\Lambda}\otimes\overline{\mathcal{D}}\approx
\overline{\Lambda}\otimes S^{\bullet}\overline{\mathfrak{X}}$ (here
$\mathrm{Der}\overline{\Lambda}$ is the DG Lie-Rinehart algebra of derivations
of $\overline{\Lambda}$, and $\overline{\mathfrak{X}}$ is the module of
sections of the normal bundle $TM/C$). Second, notice that $S^{\bullet
}\mathrm{Der}\overline{\Lambda}$ and $\overline{\Lambda}\otimes S^{\bullet
}\overline{\mathfrak{X}}$ are commutative DG algebras and there are simple
contraction data for $S^{\bullet}\mathrm{Der}\overline{\Lambda}$ over
$\overline{\Lambda}\otimes S^{\bullet}\overline{\mathfrak{X}}$. Third, use the
Homological Perturbation Theorem (and the PBW isomorphisms) to construct
contraction data for $\mathcal{D}(\overline{\Lambda})$ over $\overline
{\Lambda}\otimes\overline{\mathcal{D}}$, from contraction data for
$S^{\bullet}\mathrm{Der}\overline{\Lambda}$ over $\overline{\Lambda}\otimes
S^{\bullet}\overline{\mathfrak{X}}$.

The paper is basically self-consistent and it is organized as follows. It is
devided into three sections. In the first one, I collect the algebraic
preliminaries: namely, differential operators on graded algebras, strong
homotopy structures, homological perturbations and homotopy transfer. In
subsection \ref{SecHTUE}, I show how, under suitable regularity conditions
(namely, the existence of a PBW type isomorphism), the universal enveloping
algebra of a DG Lie-Rinehart algebra contracting over a complex $(\underline
{\mathcal{K}},\underline{\delta})$, can be homotopy transferred to produce an
$A_{\infty}$-algebra structure on $S^{\bullet}\underline{\mathcal{K}}$, the
symmetric algebra of $\underline{\mathcal{K}}$ (see below for details). To my
knolewdge, this remark appears here for the first time. In the second section,
I present my main framework, which consists of some basic geometry and
homological algebra of a foliation, including few not so standard aspects like
(normal) differential operators on a foliated manifold. Moreover, I define a
distinguished class of connections on a foliated manifold, that I call
\emph{adapted connections}. Finally, I use adapted connections to construct
two suitable PBW type isomorphisms $\mathcal{D}(\overline{\Lambda})\approx
S^{\bullet}\mathrm{Der}\overline{\Lambda}$ and $\overline{\Lambda}%
\otimes\overline{\mathcal{D}}\approx\overline{\Lambda}\otimes S^{\bullet
}\overline{\mathfrak{X}}$. Notice that a concept more general than an adapted
connection is used in the note \cite{lsx12} (see also \cite{csx12}) for
similar purposes, in the much wider context of \emph{Lie pairs}.
Unfortunately, \cite{lsx12} does not contain proofs. In the third section, I
collect all the constructions introduced in the preceeding sections to get the
$A_{\infty}$-algebra structure on $\overline{\Lambda}\otimes\overline
{\mathcal{D}}$ as outlined above. Finally, I compute the higher order
components of all higher operations and, in particular, prove that they vanish
from the fourth on. In the conclusions, I speculate about the interpretation
of the $A_{\infty}$-algebra $\overline{\Lambda}\otimes\overline{\mathcal{D}}$
as the \emph{universal enveloping strong homotopy algebra }of the strong
homotopy Lie-Rinehart algebra $(\overline{\Lambda},\overline{\Lambda}%
\otimes\overline{\mathfrak{X}})$.

\subsection{Conventions and notations}

I will adopt the following notations and conventions throughout the paper. Let
$k_{1},\ldots,k_{\ell}$ be positive integers. I denote by $S_{k_{1}%
,\ldots,k_{\ell}}$ the set of $(k_{1},\ldots,k_{\ell})$\emph{-unshuffles},
i.e., permutations $\sigma$ of $\{1,\ldots,k_{1}+\cdots+k_{\ell}\}$ such that
\[
\sigma(k_{1}+\cdots+k_{i-1}+1)<\cdots<\sigma(k_{1}+\cdots+k_{i-1}+k_{i}),\quad
i=1,\ldots,\ell.
\]

The degree of a homogeneous element $v$ in a graded vector space will be
denoted by $\bar{v}$. However, when it appears in the exponent of a sign
$(-)$, I will always omit the overbar, and write, for instance, $(-)^{v}$
instead of $(-)^{\bar{v}}$.

Every vector space will be over a field $K$ of zero characteristic, which will
actually be $\mathbb{R}$ in Section \ref{SecA}. If $V=\bigoplus_{i}V^{i}$ is a
graded vector space, I denote by $V[1]=\bigoplus_{i}V[1]^{i}$ its suspension,
i.e., the graded vector space defined by putting $V[1]^{i}=V^{i+1}$.

If $W$ is a (left) module over a graded, associative, graded commutative,
unital algebra $A$, I denote by $\odot$ the symmetric product in the (graded)
symmetric algebra $S_{A}^{\bullet}W$ of $W$.

Let $V_{1},\ldots,V_{n}$ be graded vector spaces,
\[
\boldsymbol{v}=(v_{1},\ldots,v_{n})\in V_{1}\times\cdots\times V_{n},
\]
and $\sigma\in S_{n}$ a permutation. I denote by $\chi(\sigma,\boldsymbol{v})$
the sign implicitly defined by
\[
v_{\sigma(1)}\wedge\cdots\wedge v_{\sigma(n)}=\chi(\sigma,\boldsymbol{v}%
)\,v_{1}\wedge\cdots\wedge v_{n},
\]
where $\wedge$ is the graded skew-symmetric product in the (graded) exterior
algebra of $V_{1}\oplus\cdots\oplus V_{n}$.

Now, let $M$ be a smooth manifold. I denote by $C^{\infty}(M)$ the real
algebra of smooth functions on $M$, by $\mathfrak{X}(M)$ the Lie-Rinehart
algebra of vector fields on $M$, and by $\Lambda(M)$ the DG algebra of
differential forms on $M$. Elements in $\mathfrak{X}(M)$ are always understood
as derivations of $C^{\infty}(M)$. Homogeneous elements in $\Lambda(M)$ are
always understood as $C^{\infty}(M)$-valued, skew-symmetric, multilinear maps
on $\mathfrak{X}(M)$. I denote by $d:\Lambda(M)\longrightarrow\Lambda(M)$ the
exterior differential. Every tensor product will be over $K$, if
not explicitly stated otherwise, and will be simply denoted by $\otimes$. {The tensor product over $C^\infty (M)$ will be denoted by $\otimes_M$}. I
adopt the Einstein summation convention.

By a \emph{connection }I will mean a linear connection in $T^{\ast}M$ or,
which is the same, in $TM$. Moreover, I will always understand the obvious
extension of a connection to the whole tensor bundle $\bigoplus_{i,j}%
TM^{\otimes i}\otimes T^{\ast}M^{\otimes j}$. Let $\nabla$ be a connection,
$\ldots,z^{a},\ldots$ coordinates in $M$, and $T$ a covariant tensor on $M$
locally given by
\[
T=T_{a_{1}\ldots a_{k}}dz^{a_{1}}\otimes\cdots\otimes dz^{a_{k}}.
\]
I denote by $\nabla_{a}T_{a_{1}\ldots a_{k}}$ the components of the covariant
derivative $\nabla T$ of $T$ with respect to $\nabla$, i.e.,
\[
\nabla T=\nabla_{a}T_{a_{1}\ldots a_{k}}dz^{a}\otimes dz^{a_{1}}\otimes
\cdots\otimes dz^{a_{k}}.
\]
Finally, the round bracket in $T_{(a_{1}\cdots a_{k})}$ denotes
symmetrization, i.e., $T_{(a_{1}\cdots a_{k})}=\frac{1}{k!}\sum_{\sigma\in
S_{k}}T_{a_{\sigma(1)}\cdots a_{\sigma(k)}}$.

\section{Algebraic Preliminaries}

\subsection{Differential Operators over Graded Commutative Algebras}

Let $A$ be an associative, graded commutative, unital $K$-algebra, and let
$P,Q$ be (left) $A$-modules. An element $a\in A$, define endomorphisms
(multiplications by $a$) $P\longrightarrow P$ and $Q\longrightarrow Q$ which,
abusing the notation, I denote again by $a$. Consider the graded $A$-linear
map
\[
\delta_{a}:\mathrm{Hom}_{K}(P,Q)\longrightarrow\mathrm{Hom}_{K}(P,Q)
\]
defined by
\[
\delta_{a}\phi:=[a,\phi]:=a\circ\phi-(-)^{a\phi}\phi\circ a,
\]
where $[\cdot,\cdot]$ is the graded commutator. A graded, $K$-linear map%
\[
\square:P\longrightarrow Q
\]
is a (linear) differential operator of order $k$ if
\[
\delta_{a_{0}}\delta_{a_{1}}\cdots\delta_{a_{k}}\square=0\quad\text{for all
}a_{0},a_{1},\ldots,a_{k}\in A.
\]

\begin{example}
A derivation of $A$ is a differential operator of order $1$. More generally, a
\emph{derivation }$\square:P\longrightarrow P$ \emph{of }$P$\emph{ subordinate
to} a derivation $\Delta$ in $A$, i.e., an operator $\square$ such that
\[
\square(ap)=\Delta(a)p+(-)^{a\square}a\square p,\quad a\in A,\quad p\in P,
\]
is a differential operator of order $1$.
\end{example}

The left $A$-module of differential operators $\square:P\longrightarrow Q$ of
order $k$ will be denoted by $\mathcal{D}_{k}(P,Q)$. Clearly, $\mathcal{D}%
_{0}(P,Q)=\mathrm{Hom}_{A}(P,Q)$ and there is a sequence of inclusions%
\[
\mathcal{D}_{0}(P,Q)\subset\cdots\subset\mathcal{D}_{k}(P,Q)\subset
\mathcal{D}_{k+1}(P,Q)\subset\cdots,
\]
defining a filtration in the $A$-module $\mathcal{D}(P,Q):=\bigcup
_{k}\mathcal{D}_{k}(P,Q)$. The associated graded object $\mathcal{S}%
(P,Q):=\bigoplus_{k}\mathcal{S}_{k}(P,Q)$, $\mathcal{S}_{k}(P,Q):=\mathcal{D}%
_{k}(P,Q)/\mathcal{D}_{k-1}(P,Q)$, is called the module of \emph{symbols}. I
denote by
\[
\sigma_{k}:\mathcal{D}_{k}(P,Q)\longrightarrow\mathcal{S}_{k}(P,Q)
\]
the projection.

Let $R$ be another $A$-module. The composition $\square_{1}\circ\square
_{2}:P\longrightarrow R$ of differential operators $\square_{1}%
:Q\longrightarrow R$ and $\square_{2}:P\longrightarrow Q$, of order $\ell_{1}$
and $\ell_{2}$, respectively, is a differential operator of order $\ell
_{1}+\ell_{2}$. Accordingly, there is a well defined $A$-bilinear map
\[
\odot{}:\mathcal{S}(Q,R)\otimes\mathcal{S}(P,Q)\longrightarrow\mathcal{S}(P,R)
\]
defined by
\[
\sigma_{\ell_{1}}(\square_{1})\odot\sigma_{\ell_{2}}(\square_{2}%
):=\sigma_{\ell_{1}+\ell_{2}}(\square_{1}\circ\square_{2}),\quad\square_{i}%
\in\mathcal{D}_{\ell_{i}}(P,Q),\quad i=1,2.
\]

I denote simply by $\mathcal{D}(A)=\bigcup_{k}$ $\mathcal{D}_{k}(A)$ (or just
$\mathcal{D}=\bigcup_{k}\mathcal{D}_{k}$, if this does not lead to confusion)
the graded, associative, filtered, unital $K$-algebra $\mathcal{D}(A,A)$ of
differential operators $A\longrightarrow A$ and by $\mathcal{S}(A)$ (or just
$\mathcal{S}$) the corresponding module of symbols. The bilinear map
$\mathcal{S}(A)\otimes\mathcal{S}(A)\longrightarrow\mathcal{S}(A)$ defined
above, gives $\mathcal{S}(A)$ the structure of an associative, graded
commutative, unital $K$-algebra. Notice that the (graded) commutator
$[\square_{1},\square_{2}]$ of differential operators $\square_{1},\square
_{2}:A\longrightarrow A$ of order $\ell_{1},\ell_{2}$, respectively, is a
differential operator of the order $\ell_{1}+\ell_{2}-1$. Accordingly, there
is a well defined $K$-bilinear bracket
\[
\{\cdot,\cdot\}:\mathcal{S}(A)\otimes\mathcal{S}(A)\longrightarrow
\mathcal{S}(A)
\]
defined by
\[
\{\sigma_{\ell_{1}}(\square_{1}),\sigma_{\ell_{2}}(\square_{2})\}:=\sigma
_{\ell_{1}+\ell_{2}-1}([\square_{1},\square_{2}]),\quad\square_{i}%
\in\mathcal{D}_{\ell_{i}},\quad i=1,2.
\]
The bracket $\{\cdot,\cdot\}$ gives $\mathcal{S}$ the structure of a graded
Poisson $K$-algebra. Notice that $\mathcal{D}_{0}=\mathcal{S}_{0}=A$,
$\mathcal{D}_{1}=A\oplus\mathrm{Der}A$ and $\mathcal{S}_{1}=\mathrm{Der}A$,
where $\mathrm{Der}A$ denotes the $A$-module of derivations of $A$.

Denote by \textrm{Der}$_{k}(A,Q)$, the $A$-module of graded symmetric,
$Q$-valued multiderivations of $A$ with $k$ entries. The map
\[
\varepsilon_{k}:\mathcal{S}_{k}(A,Q)\longrightarrow\mathrm{Der}_{k}(A,Q)
\]
given by
\[
\varepsilon_{k}\sigma_{k}(\square)(a_{1},\ldots,a_{k}):= (\delta_{a_{1}}%
\cdots\delta_{a_{k}}\square) 1,\quad\square\in\mathcal{D}_{k},\quad a_{1}%
,\ldots,a_{k}\in A
\]
is a well defined $A$-linear map.

\begin{remark}
Let $A$ be the $\mathbb{R}$-algebra of smooth functions on a graded manifold
$N$. Then $\mathrm{Der}_{k}(A,Q)\simeq Q\otimes S_{A}^{k}\mathrm{Der}A$ and
$\varepsilon_{k}$ is an isomorphism of $A$-modules, whose inverse
\[
Q\otimes S_{A}^{k}\mathrm{Der}A\longrightarrow\mathcal{S}_{k}(A,Q)
\]
si defined by
\[
q\otimes X_{1}\odot\cdots\odot X_{k}\longmapsto\sigma_{k}(qX_{1}\circ
\cdots\circ X_{k}),
\]
$q\in Q$, $X_{1},\ldots,X_{k}\in\mathfrak{X}(N)$. Moreover, $(\mathcal{S}%
,\{\cdot,\cdot\})$ is the Poisson algebra of fiber-wise polynomial functions
on $T^{\ast}N$.
\end{remark}

\subsection{Universal Enveloping of a Lie-Rinehart Algebra}

Let $A=\bigoplus_{i}A_{i}$ be an associative, graded commutative, unital
$K$-algebra, and $(A,Q)$ a graded \emph{Lie-Rinehart algebra}, i.e.,
1) $Q$ is a graded Lie algebra and 2) an $A$-module, 3) $A$ is a $Q$-module,
and 4) the following compatibility conditions hold
\begin{align*}
(a\cdot q)\cdot b  &  =a\cdot(q\cdot b)\\
q\cdot(a\cdot b)  &  =(q\cdot a)\cdot b+(-)^{aq}a\cdot(q\cdot b)\\
\lbrack a\cdot q,r]  &  =a\cdot\lbrack q,r]-(-)^{r(a+q)}(r \cdot a)\cdot q
\end{align*}
for all $a,b\in A$, $q,r\in Q$. In particular $Q$ acts on $A$ via derivations.
The prototype of a Lie-Rinehart algebra is $(A,\mathrm{Der}A)$.

An \emph{enveloping algebra} of the Lie-Rinehart algebra $(A,Q)$ is a graded,
associative, unital $K$-algebra $E$ together with 1) a morphism
$j:A\longrightarrow E$ of $K$-algebras, and 2) a morphism of Lie algebras
$J:Q\longrightarrow E$ such that 3)
\begin{align*}
J(a\cdot q)  &  =j(a)J(q)\\
j(q\cdot a)  &  =J(q)j(a)-(-)^{aq}j(a)J(q)
\end{align*}
for all $a\in A$, $q\in Q$. As an example, notice that the associative algebra
$\mathcal{D}(A)$ is an enveloping algebra of $(A,Q)$, with morphisms $j$, $J$
given by the canonical injection $A\longrightarrow\mathcal{D}(A)$ and the
action $Q\longrightarrow\mathrm{Der}A\subset\mathcal{D}(A)$.

A \emph{morphism of the enveloping algebras} $E$ and $E^{\prime}$ is a
morphism $f:E\longrightarrow E^{\prime}$ of graded, unital $K$ -algebras such
that diagrams
\[%
\begin{array}
[c]{c}%
\xymatrix@C=10pt{ E \ar[rr]^-f  & & E^\prime\\
&    A \ar[ur] \ar[ul]  & }%
\end{array}
\text{\quad and\quad}%
\begin{array}
[c]{c}%
\xymatrix@C=10pt{ E \ar[rr]^-f  & & E^\prime\\
&    Q \ar[ur] \ar[ul]  & }%
\end{array}
\]
commute. A \emph{universal enveloping algebra} is an enveloping algebra $U(Q)$
such that for any other enveloping algebra $E$ there is a unique morphism
$U(Q)\longrightarrow E$ of enveloping algebras. In particular an enveloping
algebra of $Q$ acts on $A$ by differential operators, i.e., there is a
morphism of $K$-algebras
\begin{equation}
U(Q)\longrightarrow\mathcal{D}(A). \label{15}%
\end{equation}

Universal enveloping algebras are clearly unique up to (unique) isomorphisms.
A canonical one can be constructed as follows. Let $\mathcal{U}$ be the tensor
algebra of the graded vector space $A\oplus Q$, and $I\subset\mathcal{U}$ the
two sided ideal generated by relations
\begin{align*}
a\otimes b  &  =a\cdot b\\
a\otimes q  &  =a\cdot q\\
q\otimes a-(-)^{aq}a\otimes q  &  =q\cdot a\\
q\otimes r-(-)^{qr}r\otimes q  &  =[q,r],
\end{align*}
for all $a,b\in A$, and $q,r\in Q$. Put $U(Q):=\mathcal{U}/I$. Then $U(Q)$ is
clearly a universal enveloping algebra of $Q$ with morphisms $j$, and $J$
given by the compositions of the canonical injections $A\longrightarrow
\mathcal{U}$, and $Q\longrightarrow\mathcal{U}$, with the projection
$\mathcal{U}\longrightarrow U(Q)$.

It follows from the above construction that $U(Q)$ possesses an algebra
filtration
\begin{equation}
U_{0}(Q)\subset U_{1}(Q)\subset\cdots\subset U_{i}(Q)\subset\cdots\subset U(Q)
\label{13}%
\end{equation}
bounded from below, where $U_{i}(Q)\subset U(Q)$ is the left $A$-submodule
generated by products of at most $i$ elements of the form $J(q)$, $q\in Q$. I
denote by $\mathrm{Gr}U(Q)=\bigoplus_{i}\mathrm{Gr}_{i}U(Q)$ the graded
algebra associated to the filtration (\ref{13}), i.e., $\mathrm{Gr}%
_{i}U(Q):=U_{i}(Q)/U_{i-1}(U)$. Since
\[
\lbrack U_{i}(Q),U_{j}(Q)]\subset U_{i+j-1}(Q),
\]
$\mathrm{Gr}U(Q)$ is a commutative algebra, and the commutator in $U(Q)$
induce a graded Poisson bracket in it. Notice that $U_{0}(Q)=\mathrm{Gr}%
_{0}U(Q)=A$ and $U_{1}(Q)=\mathrm{Gr}_{1}U(Q)\oplus A$ where the splitting
$U_1(Q)\longrightarrow A$ of the exact sequence
\[
0\longrightarrow A\longrightarrow U_{1}(Q)\longrightarrow\mathrm{Gr}%
_{1}U(Q)\longrightarrow0
\]
is given by
\[
\Delta \longmapsto \Delta(1).
\]

There is a canonical $A$-linear, surjective, Poisson map
\begin{equation}
S_{A}^{\bullet}Q\longrightarrow\mathrm{Gr}U(Q) \label{14}%
\end{equation}
mapping $S_{A}^{i}Q$ to $\mathrm{Gr}_{i}U(Q)$, and given by
\[
q_{1}\odot\cdots\odot q_{i}\longmapsto J(q_{1})\cdots J(q_{i})+U_{i-1}(Q).
\]

\begin{remark}
If $A$ is the graded algebra of smooth functions on a graded manifold $N$ and $Q$ is
the module of sections of a graded Lie algebroid over $N$ then $(A,Q)$ is a graded
Lie-Rinehart algebra and 1) projection (\ref{14}) is an isomorphism, moreover
2) exact sequences $0\longrightarrow U_{i-1}(Q)\longrightarrow U_{i}%
(Q)\longrightarrow\mathrm{Gr}_{i}U(Q)\longrightarrow0$ split (in a non
canonical way). Therefore there is a (non-canonical)
\emph{Poincar\'{e}-Birkhoff-Witt (PBW) type isomorphism }of (filtered)
$A$-modules
\[
U(Q)\approx S_{A}^{\bullet}Q,
\]
(for details about how to construct such isomorphism in the non-graded case
see, for instance, \cite{nwx99}). Notice that, if $(A,Q)$ is the Lie-Rinehart algebra 
of vector fields over $N$, then (\ref{15}) is an isomorphism and $U(Q)$
identifies with $\mathcal{D}(A)$ in a canonical way. Consequentely,
$\mathrm{Gr}U(Q)$ identifies with the algebra $\mathcal{S}(A)$ of symbols.
\end{remark}

Now, suppose that $A$ is a commutative DG algebra with differential $\delta$,
and $(A,Q)$ is a DG Lie-Rinehart algebra, i.e., $Q$ is endowed with a degree
$1$ differential $\delta_{0}$ such that 1) $\delta_{0}$ is a derivation of the
graded Lie algebra structure, 2) $\delta_{0}$ is a derivation of the
$A$-module $Q$ subordinate to $\delta$, i.e.,
\[
\delta_{0}(a\cdot q)=\delta a\cdot q+(-)^{a}a\cdot\delta_{0}q.
\]
In the above hypothesis, $\delta$ and $\delta_{0}$ can be extended to a unique
derivation of the tensor algebra $\mathcal{U}$. Moreover, such derivation
preserves the ideal $I$ and, therefore, descends to a derivation of $U(Q)$
which becomes a DG algebra (satisfying a DG version of the universal
properties of universal enveloping algebras) called the \emph{universal
enveloping DG algebra of the DG Lie-Rinehart algebra} $Q$.

\begin{example}
Let $A$ be the DG algebra of smooth functions on a DG manifold $N$ with
homological vector field $d$, let $Q=$\emph{ }$\mathrm{Der}A$, and let
$\delta_{0}:\mathrm{Der}A\longrightarrow\mathrm{Der}A$ be the inner derivation
$[d,\cdot]$. Then $U(Q)$ identifies with $\mathcal{D}(A)$ and the differential
in it is again $[d,\cdot]$. Hence, $\mathrm{Gr}U(Q)$ identifies with
$S_{A}^{\bullet}Q$, the DG Poisson algebra of fiberwise polynomial functions
on $T^{\ast}N$.
\end{example}

For more details about the material contained in this subsection see, for
instance, \cite{h04,mm10}.

\subsection{Strong Homotopy Structures\label{SHS}}

In this paper, conventions about strong homotopy algebras are the same as in
\cite{lm95}. Let $(V,\delta)$ be a cochain complex of vector spaces and $\mathscr{A}$
be any kind of algebraic structure (associative algebra, Lie algebra, module,
etc.). Roughly speaking, a homotopy $\mathscr{A}$-structure on $(V,\delta)$ is
an algebraic structure on $V$ which is of the kind $\mathscr{A}$ only up to
$\delta$-homotopies, and a \emph{strong homotopy (SH)} $\mathscr{A}$%
\emph{-structure} is a homotopy structure possessing a full system of
(coherent) \emph{higher homotopies}. In this paper, I will basically deal with
four kinds of SH structures, namely SH associative algebras (also named
$A_{\infty}$-algebras), SH modules (also named $A_{\infty}$-modules), SH
Lie-Rinehart algebras, and Poisson $L_\infty$-algebras. For them I provide detailed
definitions below.

\begin{definition}
\label{Def1}An $A_{\infty}$\emph{-algebra} is a pair $(\mathcal{A}%
,\mathscr{A})$, where $\mathcal{A}$ is a graded vector space, and
$\mathscr{A}=\{\alpha_{k},\ k\in\mathbb{N}\}$ is a family of $k$-ary,
multilinear, degree $2-k$ operations
\[
\alpha_{k}:{}\mathcal{A}^{\otimes k}\longrightarrow\mathcal{A},\quad
k\in\mathbb{N}.
\]
such that
\[
\sum_{i+j=k}(-)^{ij}\sum_{\ell=0}^{i+j}(-)^{\ell(i+1)+i(x_{1}+\cdots+x_{\ell
})}\alpha_{j+1}(x_{1},\ldots,x_{\ell},\alpha_{i}(x_{\ell+1},\ldots,x_{\ell
+i}),x_{\ell+i+1},\ldots,x_{i+j})=0
\]
for all $x_{1},\ldots,x_{k}\in\mathcal{A}$, $k\in\mathbb{N}$ (in
particular, $(\mathcal{A},\alpha_{1})$ is a cochain complex and $H(\mathcal{A}%
,\alpha_{1})$ is a graded associative algebra).
\end{definition}

If $\mathcal{A}$ is concentrated in degree $0$, then an $A_{\infty}$-algebra
structure on $\mathcal{A}$ is simply an associative algebra structure for
degree reasons. Similarly, if $\alpha_{k}=0$ for all $k>2$, then
$(\mathcal{A},\mathscr{A})$ is a DG (associative) algebra.

Let $(\mathcal{A},\mathscr{A})$ be an $A_{\infty}$-algebra.

\begin{definition}
A \emph{strict unit }in $\mathcal{A}$ is a degree $0$ element $e\in
\mathcal{A}$ such that $\alpha_{2}(e,x)=\alpha_{2}(x,e)=x$ for all
$x\in\mathcal{A}$ and $\alpha_{k}=0$, for all $k\neq2$, whenever one of the
entries is equal to $e$. An $A_{\infty}$-algebra with a strict unit is called
\emph{strictly unital}.
\end{definition}

Now let $M$ be a graded vector space and $\mathscr{M}=\{\mu_{k},\ k\in
\mathbb{N}\}$ a family of $k$-ary, multilinear, degree $2-k$ operations,
\[
\mu_{k}:\mathcal{A}^{\otimes(k-1)}\otimes M\longrightarrow M,\quad
k\in\mathbb{N}.
\]
Define new operations
\[
\alpha_{k}^{\oplus}:{}(\mathcal{A}\oplus M)^{\otimes k}\longrightarrow
\mathcal{A}\oplus M,\quad k\in\mathbb{N},
\]
extending the previous ones by linearity, and the condition that the result is
zero if one of the first $k-1$ entries is from $M$.

\begin{definition}
\label{Def2}An $A_{\infty}$\emph{-module} over $(\mathcal{A},\mathscr{A})$ is
a pair $(M,\mathscr{M})$, where $M$ is a graded vector space, and
$\mathscr{M}=\{\mu_{k},\ k\in\mathbb{N}\}$ is a family of $k$-ary,
multilinear, degree $2-k$ operations,
\[
\mu_{k}:\mathcal{A}^{\otimes(k-1)}\otimes M\longrightarrow M,\quad
k\in\mathbb{N},
\]
such that
\[
\sum_{i+j=k}(-)^{ij}\sum_{\ell=0}^{i+j}(-)^{\ell(i+1)+i(y_{1}+\cdots+y_{\ell
})}\alpha_{j+1}^{\oplus}(y_{1},\ldots,y_{\ell},\alpha_{i}^{\oplus}(y_{\ell
+1},\ldots,y_{\ell+i}),y_{\ell+i+1},\ldots,y_{i+j})=0
\]
for all $y_{1},\ldots,y_{k}\in\mathcal{A}\oplus M$, $k\in\mathbb{N}$ (in
particular, $(M,\mu_{1})$ is a complex and $H(M,\mu_{1})$ is a graded
$H(\mathcal{A},\alpha_{1})$-module).
\end{definition}

If both $\mathcal{A}$ and $M$ are concentrated in degree $0$, then an
$A_{\infty}$-module structure on $M$ over $\mathcal{A}$ is simply a left
module structure over the associative algebra $\mathcal{A}$. Similarly, if
$\alpha_{k}=0$ and $\mu_{k}=0$ for all $k>2$, then $(M,\mathscr{M})$ is a DG
module over the DG algebra $\mathcal{A}$.

\begin{definition}
An $L_{\infty}$\emph{-algebra} is a pair $(L,\mathscr{L})$, where $L$ is a
graded vector space, and $\mathscr{L}=\{\lambda_{k},\ k\in\mathbb{N}\}$ is a
family of $k$-ary, graded skew-symmetric, multilinear, degree $2-k$
operations
\[
\lambda_{k}:{}L^{\otimes k}\longrightarrow L,\quad k\in\mathbb{N},
\]
such that
\[
\sum_{i+j=k}(-)^{ij}\sum_{\sigma\in S_{i,j}}\chi(\sigma,\boldsymbol{v}%
)\,\lambda_{j+1}(\lambda_{i}(v_{\sigma(1)},\ldots,v_{\sigma(i)}),v_{\sigma
(i+1)},\ldots,v_{\sigma(i+j)})=0,
\]
for all $\boldsymbol{v}=(v_{1},\ldots,v_{k})$, $v_1, \ldots, v_k \in L$, $k\in
\mathbb{N}$ (in particular, $(L,\lambda_{1})$ is a complex and $H(L,\lambda
_{1})$ is a graded Lie algebra).
\end{definition}

If $L$ is concentrated in degree $0$, then an $L_{\infty}$-algebra structure
on $L$ is simply a Lie algebra structure. Similarly, if $\lambda_{k}=0$ for
all $k>2$, then $(L,\mathscr{L})$ is a DG Lie algebra.

Let $(L,\mathscr{L})$ be an $L_{\infty}$-algebra, $N$ a graded vector space,
and let $\mathscr{N}=\{\nu_{k},\ k\in\mathbb{N}\}$ be a family of $k$-ary,
graded skew-symmetric (in the first $k-1$ arguments), multilinear, degree
$2-k$ operations
\[
\nu_{k}:{}L^{\otimes(k-1)}\otimes N\longrightarrow N,\quad k\in\mathbb{N}.
\]
Define new operations
\[
\lambda_{k}^{\oplus}:{}(L\oplus N)^{\otimes k}\longrightarrow L\oplus N,\quad
k\in\mathbb{N},
\]
extending the previous ones by linearity, skew-symmetry, and the condition
that the result is zero if more than one entry are from $N$.

\begin{definition}
An $L_{\infty}$\emph{-module }is a pair $(N,\mathscr{N})$, where $N$ is a
graded vector space, and $\mathscr{N}=\{\nu_{k},\ k\in\mathbb{N}\}$ is a
family of $k$-ary, graded skew-symmetric (in the first $k-1$ arguments),
multilinear, degree $2-k$ operations
\[
\nu_{k}:{}L^{\otimes(k-1)}\otimes N\longrightarrow N,\quad k\in\mathbb{N},
\]
such that
\[
\sum_{i+j=k}(-)^{ij}\sum_{\sigma\in S_{i,j}}\chi(\sigma,\boldsymbol{b}%
)\,\lambda_{j+1}^{\oplus}(\lambda_{i}^{\oplus}(b_{\sigma(1)},\ldots
,b_{\sigma(i)}),b_{\sigma(i+1)},\ldots,b_{\sigma(i+j)})
\]
for all $\boldsymbol{b}=(v_{1},\ldots,v_{k-1},n)$, $v_1,\ldots,v_{k-1} \in L$, $n \in N$,
$k\in\mathbb{N}$ (in particular, $(N,\nu_{1})$ is a complex and $H(N,\nu_{1})$
is a graded $H(L,\lambda_{1})$-module).
\end{definition}

If both $L$ and $N$ are concentrated in degree $0$, then an $L_{\infty}%
$-module structure on $N$ over $L$ is simply a Lie module structure over the
Lie algebra $L$. Similarly, if $\lambda_{k}=0$ and $\nu_{k}=0$ for all $k>2$,
then $(N,\mathscr{N})$ is a DG Lie module over the DG Lie algebra $L$.

I now define SH Lie-Rinehart algebras \cite{k01}. For simplicity, I call the
resulting objects $LR_{\infty}$\emph{-algebras}.

\begin{definition}
\label{defLRalg}An $LR_{\infty}$\emph{-algebra }is a pair $(A,\mathcal{Q})$,
where $A$ is an associative, graded commutative, unital algebra, and
$(\mathcal{Q},\mathscr{Q})$ is an $L_{\infty}$-algebra, $\mathscr{Q}=\{\lambda
_{k},\ k\in\mathbb{N}\}$. Moreover, $\mathcal{Q}$ possesses the structure of
an $A$-module, and $A$ possesses the structure $\mathscr{M}=\{\nu_{k}%
,\ k\in\mathbb{N}\}$ of an $L_{\infty}$-module over $\mathcal{Q}$, such that

\begin{enumerate}
\item $\nu_{k}:\mathcal{Q}^{\otimes(k-1)}\otimes A\longrightarrow A$ is a
derivation in the last argument, and $A$-multilinear in the first $k-1$ arguments;

\item Formula
\begin{equation}
\lambda_{k}(q_{1},\ldots,q_{k-1},aq_{k})=\nu_{k}(q_{1},\ldots,q_{k-1}%
\,|\,a)\cdot q_{k}+(-)^{a(q_{1}+\cdots+q_{k-1}-k)}a\cdot\lambda_{k}%
(q_{1},\ldots,q_{k-1},q_{k}), \label{LRP}%
\end{equation}
holds for all $q_{1},\ldots,q_{k}\in\mathcal{Q}$, $a\in A$, $k\in\mathbb{N}$
(in particular, $(\mathcal{Q},\lambda_{1})$ is a DG module over $(A,\nu_{1})$,
and $(H(A,\nu_1),H(\mathcal{Q},\lambda_{1}))$ is a graded Lie-Rinehart algebra.
\end{enumerate}
\end{definition}

If $\mathcal{Q}$ and $A$ are concentrated in degree $0$, then $(A,Q)$ is simply a Lie-Rinehart algebra. Similarly, if $\lambda_{k}=0$ and $\nu_{k}=0$ for all $k>2$, then
$(A, Q)$ is a DG Lie-Rinehart algebra.

{In the \emph{smooth setting}, i.e., when $A$ is the algebra of smooth
functions on a smooth manifold }$M${ (in particular }$A$ is concentrated in
degree $0${), and $\mathcal{Q}[-1]$ is the }$A$-module of sections of a graded
bundle $\mathcal{E}$ over {$M$}, then $\mathcal{E}${ is sometimes
called an $L_{\infty}$-algebroid \cite{sss09,b11,bp12}.}

\begin{definition}
A \emph{Poisson $L_\infty$-algebra} is an $L_{\infty}$-algebra $(\mathcal{P}%
,\mathscr{P})$, $\mathscr{P}=\{\Lambda_{k},\ k\in\mathbb{N}\}$, such that
$\mathcal{P}$ possesses the structure of an associative, graded commutative,
unital algebra and $\Lambda_{k}$ is a graded multiderivation for all
$k\in\mathbb{N}$.
\end{definition}

{\begin{remark}
Poisson $L_\infty$-algebras are called $P_\infty$-algebras in \cite{cf07}. Notice that they are homotopy versions of Poisson algebra where \textquotedblleft only the Poisson bracket is homotopyfied\textquotedblright, while the associative, commutative product is not. More general versions of Poisson algebras up to homotopy can be obtained via the (systematic) operadic approach to homotopy algebras (see, for instance, \cite{v99}). This is the main reason why, as suggested by an anonymous referee, I do not use the name \emph{SH Poisson algebras} for Poisson $L_\infty$-algebras. Similar considerations hold actually for Definition \ref{defLRalg} where \textquotedblleft only the Lie bracket and Lie module structure on a Lie-Rinehart algebra are homotopyfied\textquotedblright\ while the associative, commutative product, and the corresponding module structure are not. In this case, however, it is safer to keep the name \emph{SH Lie-Rinehart algebra} since an operadic approach in this context is still missing. 
\end{remark}}

Notice that if $\mathcal{P}$ is concentrated in degree $0$, then a SH Poisson
algebra structure on $\mathcal{P}$ is simply a Poisson algebra structure.
Similarly, if $\Lambda_{k}=0$ for all $k>2$, then $(\mathcal{P},\mathscr{P})$
is a DG Poisson algebra.

\begin{remark}
\label{RemLRP}Let $A$ be an associative, graded commutative, unital algebra
and $\mathcal{Q}$ an $A$-module. The datum of an $LR_{\infty}$-algebra
structure on $(A,\mathcal{Q})$ is equivalent to the datum of a SH Poisson
algebra structure on $S_{A}^{\bullet}\mathcal{Q}$ such that
\[
\Lambda_{k}(u_{1},\ldots,u_{k})\in S_{A}^{p_{1}+\cdots+p_{k}-k+1}\mathcal{Q}%
\]
whenever $u_{i}\in S_{A}^{p_{i}}\mathcal{Q}$, $i=1,\ldots,k$ \cite{b11}. The
operations in $S_{A}^{\bullet}\mathcal{Q}$ can be obtained from the ones in
$\mathcal{Q}$, extending the latter as multiderivations.
\end{remark}

Finally, notice that the canonical construction of a Lie algebra from an
associative algebra can be generalized to the SH context as follows. Let
$(\mathcal{A},\mathscr{A})$ be an $A_{\infty}$-algebra, $\mathscr{A}=\{\alpha
_{k},\ k\in\mathbb{N}\}$. Define new operations
\[
\mathsf{A}\alpha_{k}:\mathcal{A}^{\otimes k}\longrightarrow\mathcal{A},
\]
by putting
\[
(\mathsf{A}\alpha_{k})(x_{1},\ldots,x_{k}):=\sum_{\sigma\in S_{k}}\chi
(\sigma,\boldsymbol{x})\alpha_{k}(x_{\sigma(1)},\ldots,x_{\sigma(k)}),
\]
$\boldsymbol{x}=(x_{1},\ldots,x_{k})$, $x_1, \ldots, x_k \in\mathcal{A}$, i.e.,
$\mathsf{A}\alpha_{k}$ is the \emph{skew-symmetrization} of $\alpha_{k}$. The
$\mathsf{A}\alpha$'s give to $\mathcal{A}$ the structure of an $L_{\infty}%
$-algebra \cite{ls93}.

\begin{remark}
The theory of universal enveloping of $L_{\infty}$-algebras (see, for
instance, \cite{b08}) is not fully developed, not to speak about universal
enveloping of $LR_{\infty}$-algebras. However, few (naive) remarks can be done
in this respect. First of all, recall that a morphism $f:\mathcal{A}%
\longrightarrow\mathcal{A}^{\prime}$ (resp., $f:L\longrightarrow L^{\prime}$)
of $A_{\infty}$-algebras (resp., $L_{\infty}$-algebras) is a family of
$K$-multilinear (resp., skew-symmetric, $K$-multilinear) maps $f_{k}%
:\mathcal{A}^{\otimes k}\longrightarrow\mathcal{A}^{\prime}$ (resp.,
$f_{k}:L^{\otimes k}\longrightarrow L^{\prime})$ satisfying suitable
compatibility conditions (see, for instance, \cite{lm95} for details). It is
tempting to define an \emph{enveloping SH algebra} for an $LR_{\infty}%
$-algebra $\mathcal{Q}$ over a DG algebra $A$, as an $A_{\infty}$-algebra
$\mathcal{E}$ together with 1) a morphism of DG algebras $j:A\longrightarrow
\mathcal{E}$, and 2) a morphism of $L_{\infty}$-algebras $J:Q\longrightarrow
\mathcal{E}$ such that 3)
\[
J_{k}(a\cdot q_{1},q_{2}\ldots,q_{k})=j(a)J_{k}(q_{1},\ldots q_{k})
\]
and
\begin{align}
&  j(\nu_{k}(q_{1},\ldots,q_{k-1}|a))\nonumber\\
&  =\quad\ \sum_{\ell=1}^{k-1}\sum_{\substack{k_{1}+\cdots+k_{\ell}%
=k-1\\k_{1}\leq\cdots\leq k_{\ell}}}\sum_{\sigma\in S_{k_{1},\ldots,k_{\ell}%
}^{<}}\chi(\sigma,\boldsymbol{q})(\mathsf{A}\alpha_{\ell+1})(J_{k_{1}%
}(q_{\sigma(1)},\ldots),\ldots,J_{k_{\ell}}(\ldots,q_{\sigma(k-1)}),ja),
\label{23}%
\end{align}
(here $S_{k_{1},\ldots,k_{\ell}}^{<}\subset S_{k_{1},\ldots,k_{\ell}}$ is the
set of $(k_{1},\ldots,k_{\ell})$-unshuffles such that
\[
\sigma(k_{1}+\cdots+k_{i-1}+1)<\sigma(k_{1}+\cdots+k_{i-1}+k_{i}+1)\text{\quad
whenever }k_{i}=k_{i+1}\text{,}%
\]
see the definition of morphism of $L_{\infty}$-algebras, e.g., in
\cite{lm95}). One could then define a \emph{universal enveloping SH algebra}
as an enveloping SH algebra satisfying (obvious) universal properties, and try
to construct it. Developing these ideas, however, goes beyond the scopes of this paper.
\end{remark}

\subsection{Homological Perturbations and Homotopy Transfer\label{SecHPHT}}

The main homological tools used in this paper are the \emph{Perturbation
Lemma} and the \emph{Homotopy Transfer Theorem}. I recall in this section
those versions of them that will be used below.

Let $(\mathcal{K},\delta)$ and $(\underline{\mathcal{K}},\underline{\delta})$
be cochain complexes of vector spaces, $p:(\mathcal{K},\delta)\longrightarrow
(\underline{\mathcal{K}},\underline{\delta})$ and $j:(\underline{\mathcal{K}%
},\underline{\delta})\longrightarrow(\mathcal{K},\delta)$ cochain maps, and
let $h:\mathcal{K}\longrightarrow\mathcal{K}$ be a degree $-1$ endomorphism:
\[
\xymatrix{     *{ \quad \quad(\mathcal{K}, \delta)\ }
\ar@(dl,ul)[]^-{h}\
\ar@<0.5ex>[r]^-{p} & *{\
(\underline{\mathcal{K}},\underline{\delta})\quad\ \  \ \quad}  \ar@<0.5ex>[l]^-{j}}
\]

\begin{definition}
The data $(p,j,h)$ are \emph{contraction data for} $(\mathcal{K},\delta)$
\emph{over} $(\underline{\mathcal{K}},\underline{\delta})$ if

\begin{enumerate}
\item $j$ is a right inverse of $p$, i.e., $pj=\mathrm{id}$,

\item $h$ is a contracting homotopy, i.e., $[h,\delta]=\mathrm{id}-jp$,

\item the \emph{side conditions} $h^{2}=0$, $hj=0$, $ph=0$ are satisfied.
\end{enumerate}
\end{definition}

Now, let $(p_{0},j_{0},h_{0})$ be contraction data for a cochain complex
$(\mathcal{K},\delta_{0})$ over $(\underline{\mathcal{K}},\underline{\delta}%
)$. Suppose that there is another differential $\delta$ in $\mathcal{K}$, and
put $t:=\delta_{0}-\delta$. The Perturbation Lemma allows one to construct
contraction data for $(\mathcal{K},\delta)$ over a suitable new complex
$(\underline{\mathcal{K}},\underline{\delta}{}_{t})$.

\begin{theorem}
[Perturbation Lemma]Let $th_{0}:\mathcal{K}\longrightarrow\mathcal{K}$ be
locally nilpotent, i.e., for any $x\in\mathcal{K}$ there is $k\in\mathbb{N}$
such that $(th_{0})^{k}(x)=0$, and
\[
X:=t+th_{0}t+th_{0}th_{0}t+\cdots=\sum_{i=0}^{\infty}t(h_{0}t)^{i}=\sum
_{i=0}^{\infty}(th_{0})^{i}t.
\]
Moreover, let $\underline{\delta}{}_{t},p_{t},j_{t},h_{t}$ be defined as%
\begin{align}
\underline{\delta}{}_{t}  &  :=\underline{\delta}-p_{0}Xj_{0}\nonumber\\
p_{t}  &  :=p_{0}(\mathrm{id}+Xh_{0})\label{p}\\
j_{t}  &  :=(\mathrm{id}+h_{0}X)j_{0}\label{j}\\
h_{t}  &  :=h_{0}+h_{0}Xh_{0}. \label{hi}%
\end{align}
Then $(p_{t},j_{t},h_{t})$ are contraction data for $(\mathcal{K},\delta)$
over $(\underline{\mathcal{K}},\underline{\delta}{}_{t})$.
\end{theorem}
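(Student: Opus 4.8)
The plan is to reduce everything to a single master identity describing how the perturbation series $X$ interacts with $\delta_{0}$, and then to treat all the defining properties of contraction data as bookkeeping built on top of it. First I would record the elementary consequences of the definition of $X$. Writing $\Sigma:=\mathrm{id}+h_{0}X$ and $\Sigma^{\prime}:=\mathrm{id}+Xh_{0}$ (both well defined by local nilpotence of $th_{0}$, since $\Sigma^{\prime}=\sum_{i}(th_{0})^{i}=(\mathrm{id}-th_{0})^{-1}$), the perturbed operators become $j_{t}=\Sigma j_{0}$, $p_{t}=p_{0}\Sigma^{\prime}$ and $h_{t}=h_{0}\Sigma^{\prime}=\Sigma h_{0}$. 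The defining geometric series gives the recursions $X=t+th_{0}X=t+Xh_{0}t$, whence $th_{0}X=Xh_{0}t=X-t$ and $t\Sigma=\Sigma^{\prime}t=X$; these will repeatedly let me trade a factor of $X$ for one of $t$. Finally, since $\delta=\delta_{0}-t$ with $\delta_{0}^{2}=\delta^{2}=0$, the perturbation satisfies the integrability relation $\delta_{0}t+t\delta_{0}=t^{2}$.

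The side conditions and $p_{t}j_{t}=\mathrm{id}$ are purely formal, and I would dispatch them first using only $h_{0}^{2}=0$ and the side conditions for $(p_{0},j_{0},h_{0})$. Indeed $h_{0}\Sigma=h_{0}$ and $\Sigma^{\prime}h_{0}=h_{0}$ (because $h_{0}^{2}=0$), so $h_{t}^{2}=\Sigma h_{0}\cdot h_{0}\Sigma^{\prime}=0$, $h_{t}j_{t}=\Sigma(h_{0}\Sigma)j_{0}=\Sigma h_{0}j_{0}=0$ and $p_{t}h_{t}=p_{0}(\Sigma^{\prime}h_{0})\Sigma^{\prime}=p_{0}h_{0}\Sigma^{\prime}=0$; while expanding $p_{t}j_{t}=p_{0}(\mathrm{id}+Xh_{0}+h_{0}X)j_{0}$ (the $h_{0}^{2}$-term drops) collapses to $p_{0}j_{0}=\mathrm{id}$ via $p_{0}h_{0}=0$ and $h_{0}j_{0}=0$.

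The crux is the master identity
\[
[\delta_{0},X]=\delta_{0}X+X\delta_{0}=Xj_{0}p_{0}X .
\]
I would obtain it by applying the graded derivation $\mathsf{D}:=[\delta_{0},\cdot\,]$ (of the composition product) to the recursion $X=t+th_{0}X$. The inputs are $\mathsf{D}(t)=\delta_{0}t+t\delta_{0}=t^{2}$ (integrability) and $\mathsf{D}(h_{0})=\delta_{0}h_{0}+h_{0}\delta_{0}=\mathrm{id}-j_{0}p_{0}$ (the contraction homotopy for the unperturbed data; all three operators $t,h_{0},\delta_{0}$ are odd). Expanding by the Leibniz rule and using $t\Sigma=X$ to cancel the $t^{2}$-terms leaves $(\mathrm{id}-th_{0})\,\mathsf{D}(X)=tj_{0}p_{0}X$; inverting $\mathrm{id}-th_{0}$ by $\Sigma^{\prime}$ and using $\Sigma^{\prime}t=X$ yields the identity. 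This is the step I expect to be the main obstacle, both because the derivation calculus must be set up with the correct signs and because this is the single place where integrability enters.

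With the master identity in hand, the remaining checks are matching computations. For $\underline{\delta}_{t}^{\,2}=0$ I would expand $(\underline{\delta}-p_{0}Xj_{0})^{2}$ and rewrite the cross terms using the cochain conditions $\underline{\delta}p_{0}=p_{0}\delta_{0}$, $j_{0}\underline{\delta}=\delta_{0}j_{0}$ into $-p_{0}(\delta_{0}X+X\delta_{0})j_{0}=-p_{0}Xj_{0}p_{0}Xj_{0}$, which cancels the quadratic term. For the cochain property of $p_{t}$ (and symmetrically $j_{t}$) I would push $\delta=\delta_{0}-t$ through $p_{t}=p_{0}(\mathrm{id}+Xh_{0})$, eliminating $h_{0}\delta_{0}$ via $\mathsf{D}(h_{0})$, eliminating $X\delta_{0}$ via the master identity, and using $Xh_{0}t=X-t$; the result factors as $(\underline{\delta}-p_{0}Xj_{0})p_{t}=\underline{\delta}_{t}p_{t}$. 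Finally, for the contraction homotopy I would split $[h_{t},\delta]=\mathsf{D}(h_{t})-(h_{t}t+th_{t})$: the recursions collapse $h_{t}t+th_{t}$ to $h_{0}X+Xh_{0}$, while expanding $\mathsf{D}(h_{t})=\mathsf{D}(h_{0})+\mathsf{D}(h_{0}Xh_{0})$ (again via the master identity and $h_{0}j_{0}=0=p_{0}h_{0}$) produces exactly $\mathrm{id}-j_{0}p_{0}-j_{0}p_{0}Xh_{0}-h_{0}Xj_{0}p_{0}-h_{0}Xj_{0}p_{0}Xh_{0}$, which is precisely $\mathrm{id}-j_{t}p_{t}$ after expanding $j_{t}p_{t}=\Sigma j_{0}p_{0}\Sigma^{\prime}$. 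This last matching is the longest step, but it is mechanical once the master identity is available.
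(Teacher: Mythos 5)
Your proposal is correct, but there is nothing in the paper to compare it against: the Perturbation Lemma is quoted there as a classical result of homological perturbation theory and is stated without proof (the paper only builds on it), so your argument has to stand on its own --- and it does. The reductions $j_t=\Sigma j_0$, $p_t=p_0\Sigma'$, $h_t=h_0\Sigma'=\Sigma h_0$, the recursions $t\Sigma=\Sigma' t=X$ and $th_0X=Xh_0t=X-t$, and the purely formal verification of $p_tj_t=\mathrm{id}$ together with the three side conditions from $h_0^2=0$, $h_0j_0=0$, $p_0h_0=0$ are all right. The master identity $\delta_0X+X\delta_0=Xj_0p_0X$ is indeed the crux, and your derivation of it is valid: applying $[\delta_0,\cdot\,]$ to $X=t+th_0X$, with inputs $\delta_0t+t\delta_0=t^2$ and $[\delta_0,h_0]=\mathrm{id}-j_0p_0$, gives $(\mathrm{id}-th_0)[\delta_0,X]=tj_0p_0X$ after the $t^2$-cancellation, and inverting by $\Sigma'$ finishes it (all of $t,h_0,\delta_0,X$ are odd, so the graded Leibniz signs work as you use them). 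The downstream checks --- $\underline{\delta}{}_t^{\,2}=0$, the chain-map properties of $p_t$ and $j_t$, and $[h_t,\delta]=\mathrm{id}-j_tp_t$ --- all go through exactly along the lines you describe; I verified each. One small imprecision in the last step: the expression $\mathrm{id}-j_0p_0-j_0p_0Xh_0-h_0Xj_0p_0-h_0Xj_0p_0Xh_0$ is not what expanding $\mathsf{D}(h_t)$ produces by itself (that expansion gives the same expression plus $Xh_0+h_0X$); it is what remains after subtracting $h_tt+th_t=h_0X+Xh_0$. Since your sentence already sets up that subtraction, this is a matter of wording, not of substance.
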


\begin{remark}
\label{Rem1} A rather standard situation, which will be also encountered in
this paper, is when $\mathcal{K}$ and $\underline{\mathcal{K}}$ are endowed
with filtrations
\begin{align*}
\mathcal{K}_{0}  &  \subset\mathcal{K}_{1}\subset\cdots\subset\mathcal{K}%
_{i}\subset\cdots\subset\mathcal{K},\\
\underline{\mathcal{K}}{}_{0}  &  \subset\underline{\mathcal{K}}{}_{1}%
\subset\cdots\subset\underline{\mathcal{K}}{}_{i}\subset\cdots\subset
\underline{\mathcal{K}},
\end{align*}
bounded from below, and such that, 1) they are preserved by $\delta
_{0},\underline{\delta},p_{0},j_{0},h_{0}$, and 2) $t(\mathcal{K}_{i}%
)\subset\mathcal{K}_{i-1}$. In this case, $th_{0}$ is automatically locally
nilpotent and the Perturbation Lemma applies.
\end{remark}

Contraction data for $(\mathcal{K},\delta)$ over $(\underline{\mathcal{K}%
},\underline{\delta})$ can be used to transfer SH structures from the former
to the latter, in particular when the SH structure one begins with does not
possess higher homotopies. This is a rather rich source of SH structures.
Moreover, there are explicit formulas for the higher homotopies of the induced structure.

{
\begin{theorem}
[Homotopy Transfer Theorem, see, e.g., \cite{m99,h10}]\label{HTT}Let
$(\mathcal{V},\delta)$ and $(\underline{\mathcal{V}},\underline{\delta})$ be
cochain complexes and let $(p,j,h)$ be contraction data for $(\mathcal{V}%
,\delta)$ over $(\underline{\mathcal{V}},\underline{\delta})$.
\begin{enumerate}
\item Assume $(\mathcal{V},\delta)$ possesses the structure $\circ$ of a DG
algebra, and let $\mathscr{A}=\{\alpha_{k},\ k\in\mathbb{N}\}$ be the family
of graded operations
\[
\alpha_{k}:\underline{\mathcal{V}}^{\otimes k}\longrightarrow\underline
{\mathcal{V}}%
\]
defined by
\[
\alpha_{1}:=\underline{\delta},\quad\alpha_{k}:=p\beta_{k},\quad k\geq2,
\]
where the $\beta$'s are inductively defined by
\[
\gamma_{1}:=-j,\quad\gamma_{k}:=h\beta_{k},
\]
and
\[
\beta_{k}(x_{1},\ldots,x_{k}):=\sum_{\ell+m=k}(-)^{a(\ell,m,\boldsymbol{x}%
)}\gamma_{\ell}(x_{1},\ldots,x_{\ell})\circ\gamma_{m}(x_{\ell+1}%
,\ldots,x_{\ell+m}),
\]
$x_{1},\ldots,x_{k}\in\underline{\mathcal{V}}$, where $a(\ell,m,\boldsymbol{x}%
):=\ell-1+(m-1)%
{\textstyle\sum_{i=1}^{\ell}}
\bar{x}_{i}$, $k\geq2$. Then $(\underline{\mathcal{V}},\mathscr{A})$ is an
$A_{\infty}$-algebra. Moreover, if $(\mathcal{V},\delta)$ is a unital DG
algebra with unit $1_{\mathcal{V}}$ such that $(jp)1_{\mathcal{V}%
}=1_{\mathcal{V}}$, then $(\underline{\mathcal{V}},\mathscr{A})$ is a strictly
unital $A_{\infty}$-algebra with unit $p1_{\mathcal{V}}$.
\item Assume $(\mathcal{V},\delta)$ possesses the structure $[{}\cdot{}%
,{}\cdot{}]$ of a DG Lie algebra, and let $\mathscr{L}=\{\lambda_{k}%
,\ k\in\mathbb{N}\}$ be the family of graded operations
\[
\lambda_{k}:\underline{\mathcal{V}}^{\otimes k}\longrightarrow\underline
{\mathcal{V}}%
\]
defined by
\begin{equation}
\lambda_{1}:=\underline{\delta},\quad\lambda_{k}:=p\phi_{k},\quad
k\geq2\label{For}%
\end{equation}
where the $\phi$'s are inductively defined by
\[
\psi_{1}:=-j,\quad\psi_{k}:=h\phi_{k},
\]
and
\begin{align*}
&  \phi_{k}(x_{1},\ldots,x_{k})\\
&  :=\sum_{\ell+m=k}\sum_{\sigma\in S_{\ell,m}}(-)^{b(\ell,m,\boldsymbol{x}%
)}\chi(\sigma,\boldsymbol{x})[\psi_{\ell}(x_{\sigma(1)},\ldots,x_{\sigma
(\ell)}),\psi_{m}(x_{\sigma(\ell+1)},\ldots,x_{\sigma(\ell+m)})],
\end{align*}
$x_{1},\ldots,x_{k}\in\underline{\mathcal{V}}$, where $b(\ell,m,\boldsymbol{x}%
):=\ell-1+(m-1)\sum_{i=1}^{\ell}x_{\sigma(i)}$, $k\geq2$. Then
$(\underline{\mathcal{V}},\mathscr{L})$ is an $L_{\infty}$-algebra.
\end{enumerate}
\end{theorem}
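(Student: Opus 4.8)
The plan is to recognize the stated formulas for $\{\alpha_k\}$ and $\{\lambda_k\}$ as the corestriction components of a \emph{perturbed codifferential} on a cofree coalgebra, so that the $A_{\infty}$ (resp.\ $L_{\infty}$) relations collapse to the single statement that this perturbed differential squares to zero, which I can then read off directly from the Perturbation Lemma proved above. For part (1) I would encode the DG algebra structure on $(\mathcal{V},\delta)$ as a degree $1$ coderivation $b$ of the reduced tensor coalgebra $\mathcal{T}:=\bigoplus_{k\geq 1}(\mathcal{V}[1])^{\otimes k}$ whose only nonzero corestriction components are $b_{1}=\delta$ and $b_{2}=\circ$ (with the suspension-corrected sign); the DG algebra axioms, namely associativity of $\circ$, the fact that $\delta$ is a derivation of $\circ$, and $\delta^{2}=0$, are then equivalent to the single identity $b^{2}=0$. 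I would next lift the contraction data $(p,j,h)$ to the cofree coalgebras by the \emph{tensor trick}: $P:=\bigoplus_{k}p^{\otimes k}$ and $J:=\bigoplus_{k}j^{\otimes k}$ are strict coalgebra morphisms, and $H:=\sum_{k}\sum_{a+1+b=k}(jp)^{\otimes a}\otimes h\otimes \mathrm{id}^{\otimes b}$ is a contracting homotopy, yielding contraction data $(P,J,H)$ for $(\mathcal{T},d)$ over $(\underline{\mathcal{T}},\underline{d})$, where $d$ and $\underline{d}$ are the coderivations induced by $\delta$ and $\underline{\delta}$ on each tensor factor.

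I would then apply the Perturbation Lemma to the perturbation $t:=d-b$ (the part carrying the product $\circ$), playing the role of $\delta_{0}-\delta$. Filtering $\mathcal{T}$ by tensor length, $b_{2}$ lowers length by one while $H$ preserves it, so $tH$ strictly lowers length; hence $tH$ is locally nilpotent and Remark \ref{Rem1} guarantees the Lemma applies with the length filtration. Expanding the transferred differential $\underline{d}-P X J$ along the geometric series $X=\sum_{i}t(Ht)^{i}$ and extracting the corestriction components reproduces verbatim the recursion $\alpha_{1}=\underline{\delta}$, $\alpha_{k}=p\beta_{k}$, $\gamma_{1}=-j$, $\gamma_{k}=h\beta_{k}$, with $\beta_{k}$ the stated single-cut sum over $\ell+m=k$; the series is precisely the sum over planar binary trees with inner edges carrying $h$, leaves $j$, root $p$, and vertices $\circ$. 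Since the Lemma delivers genuine contraction data, the transferred map is a bona fide differential, i.e.\ squares to zero, and this unwinds exactly to the $A_{\infty}$ relations of Definition \ref{Def1}. Strict unitality under $(jp)1_{\mathcal{V}}=1_{\mathcal{V}}$ follows from the side conditions $ph=0$ and $hj=0$, which annihilate every tree carrying the unit at a leaf except the length-two product, forcing $\alpha_{2}(p1_{\mathcal{V}},-)=\alpha_{2}(-,p1_{\mathcal{V}})=\mathrm{id}$ and the vanishing of all higher $\alpha_{k}$ on $p1_{\mathcal{V}}$.

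For part (2) I would run the identical argument with the tensor coalgebra replaced by the cofree \emph{cocommutative} coalgebra $\mathcal{S}:=\bigoplus_{k\geq 1}(\mathcal{V}[1])^{\odot k}$. The DG Lie bracket becomes a square-zero coderivation of $\mathcal{S}$ (encoding the Jacobi identity, $\delta$ a derivation of $[\cdot,\cdot]$, and $\delta^{2}=0$), the maps $P,J,H$ descend to the symmetric quotients, and the Perturbation Lemma once more produces a square-zero coderivation. Keeping track of the Koszul signs $\chi(\sigma,\boldsymbol{x})$ and summing over the $(\ell,m)$-unshuffles $\sigma\in S_{\ell,m}$, its components are exactly the stated $\lambda_{k}$, $\phi_{k}$, $\psi_{k}$, and squaring to zero is the $L_{\infty}$ relation.

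The main obstacle is bookkeeping rather than conceptual: one must verify that the differential produced by the (purely linear) Perturbation Lemma is still a \emph{coderivation}, so that it genuinely defines an $A_{\infty}$ (resp.\ $L_{\infty}$) structure and not merely an odd square-zero endomorphism of $\underline{\mathcal{T}}$ (resp.\ $\underline{\mathcal{S}}$). This is exactly where the tensor-trick form of $H$ is essential: because $P,J$ are strict coalgebra morphisms and $H$ inserts a single $h$ against a background of $jp$, a direct comultiplication-compatibility check shows the perturbed map respects the coproduct. Reconciling the coalgebra signs with the explicit constants $a(\ell,m,\boldsymbol{x})$ and $b(\ell,m,\boldsymbol{x})$ absorbs the remaining effort. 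A reader wishing to bypass coalgebras altogether may instead prove the relations by induction on $k$, substituting the defining recursion for $\beta_{k}$ (resp.\ $\phi_{k}$) into the $A_{\infty}$ (resp.\ $L_{\infty}$) identity and collapsing the cross terms by means of $[h,\delta]=\mathrm{id}-jp$ and $\delta^{2}=0$; the side conditions $h^{2}=0$, $hj=0$, $ph=0$ are precisely what make those cross terms telescope.
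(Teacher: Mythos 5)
The paper never proves Theorem \ref{HTT} at all: it is imported from the literature (\cite{m99} for the explicit recursion, \cite{h10} for the perturbation--theoretic treatment of the Lie case), so your proposal can only be judged against that literature, and for part (1) it reconstructs the standard argument correctly. Encoding the DG algebra as a square-zero coderivation $b$ on the reduced tensor coalgebra, lifting $(p,j,h)$ by the tensor trick with $H=\sum_{a+1+b=k}(jp)^{\otimes a}\otimes h\otimes \mathrm{id}^{\otimes b}$ (which, as one checks termwise using $hj=0$, $ph=0$, $h^{2}=0$, does preserve the side conditions on the tensor coalgebra), perturbing with respect to the word-length filtration (so Remark \ref{Rem1} applies), and reading off corestriction components does yield the recursion $\gamma_{1}=-j$, $\gamma_{k}=h\beta_{k}$, $\alpha_{k}=p\beta_{k}$ as a sum over planar binary trees. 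You also correctly isolate the one point that is not bookkeeping: the Perturbation Lemma is blind to the coproduct, and the assertion that $\underline{\delta}-PXJ$ is again a \emph{coderivation} is a theorem in its own right (the coalgebra perturbation lemma of Gugenheim, Lambe and Stasheff), whose proof rests on exactly the two facts you invoke, strictness of $P,J$ and the special shape of $H$. Your unitality argument is likewise sound: with $e=p1_{\mathcal{V}}$ one has $je=1_{\mathcal{V}}$, and induction using $hj=0$, $h^{2}=0$, $ph=0$ kills every $\gamma_{k}$, $k\geq 2$, and every $\alpha_{k}$, $k\geq 3$, having an entry $e$.

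Part (2), however, contains a genuine gap. You claim that ``the maps $P,J,H$ descend to the symmetric quotients.'' This is true for $P$ and $J$, which are equivariant, but false for the tensor-trick homotopy: already on $(\mathcal{V}[1])^{\otimes 2}$ the operator $jp\otimes h+h\otimes\mathrm{id}$ is not symmetric, so $H$ induces nothing on $\bigoplus_{k}(\mathcal{V}[1])^{\odot k}$. The standard repair is to average $H$ over the symmetric groups; the averaged operator still satisfies $[H,d]=\mathrm{id}-JP$, $PH=0$ and $HJ=0$ (each summand of these identities is conjugation-invariant or vanishes termwise), but the side condition $H^{2}=0$ is destroyed by cross terms $\sigma H\sigma^{-1}\tau H\tau^{-1}$, and with it both the clean recursion and your telescoping mechanism. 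Producing a homotopy that is simultaneously equivariant, coproduct-compatible and side-condition-compatible is precisely the technical problem that the cited paper \cite{h10} was written to solve, so ``run the identical argument'' is not available for the $L_{\infty}$ case. Your closing alternative---proving the relations by induction on $k$ directly from the recursion, collapsing cross terms with $[h,\delta]=\mathrm{id}-jp$, $\delta^{2}=0$ and the side conditions---does go through for both parts and is the shortest route to a self-contained proof; if you insist on the coalgebra picture for part (2), you must either follow \cite{h10} or construct a corrected equivariant homotopy, not merely quotient the one from part (1).
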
}

\subsection{Homotopy Transfer of Universal Enveloping\label{SecHTUE}}

In this subsection I present an abstract algebraic model for the concrete
geometric framework of the next section.

SH module structures can be transferred along contraction data similarly as in
the previous subsection. Even more, one can transfer a SH Lie-Rinehart algebra
structure along suitable contraction data. Namely, let $(A,\delta)$ be a
commutative, unital DG algebra, let $\mathcal{K}$ be a DG Lie-Rinehart algebra
over $(A,\delta)$ with differential $\delta_{0}$ and Lie-bracket $[{}\cdot
{},{}\cdot{}]$, and let $\underline{\mathcal{K}}$ be a DG-module over
$(A,\delta)$ with differential $\underline{\delta}$. Moreover, suppose that
there are $A$-linear contraction data $(p_{0},j_{0},h_{0})$ for $(\mathcal{K}%
,\delta_{0})$ over $(\underline{\mathcal{K}},\underline{\delta})$. Then, it is
easy to see that there is an $LR_{\infty}$-algebra structure $\mathscr{Q}$ in
$\underline{\mathcal{K}}$ defined in a similar way as in Theorem \ref{HTT}. I
do not report here the obvious details.

Now, consider the symmetric DG algebras $S_{A}^{\bullet}\mathcal{K}$ and
$S_{A}^{\bullet}\underline{\mathcal{K}}$. In view of Remark \ref{RemLRP}, they
are endowed with a DG Poisson structure and a Poisson $L_\infty$-algebra structure
$\mathscr{P}$, respectively. I denote 1) by $\{{}\cdot{},{}\cdot{}\}$ the
Poisson bracket in $S_{A}^{\bullet}\mathcal{K}$, and 2) again by $\delta_{0}$
and $\underline{\delta}$ the differentials in $S_{A}^{\bullet}\mathcal{K}$ and
$S_{A}^{\bullet}\underline{\mathcal{K}}$, respectively. I claim that the
contraction data $(p_{0},j_{0},h_{0})$ extend to contraction data%
\[
\xymatrix{     *{ \quad \quad(S_A^\bullet \mathcal{K}, \delta_0)\ }
\ar@
(dl,ul)[]+<-3ex,-1ex>;[]+<-3ex,1ex>^-{h_0}
\ar@<0.5ex>[r]^-{p_0} & *{\
(S_A^\bullet\underline{\mathcal{K}},\underline{\delta})\quad\ \  \ \quad}  \ar@<0.5ex>[l]^-{j_0}}
\]
such that the above mentioned Poisson $L_\infty$-algebra structure on $S_{A}^{\bullet
}\underline{\mathcal{K}}$ is obtained from the DG Poisson structure on
$S_{A}^{\bullet}\mathcal{K}$ via homotopy transfer. Indeed, put $\mathcal{Z}%
:=\ker p_{0}$. Then $\mathcal{K}=\underline{\mathcal{K}}\oplus\mathcal{Z}$ and
$S_{A}^{\bullet}\mathcal{K}\simeq S_{A}^{\bullet}\underline{\mathcal{K}%
}\otimes_A S_{A}^{\bullet}\mathcal{Z}$. Now, extend $p_{0}$ and $j_{0}$ as
algebra morphisms, and let $h^{\prime}:S_{A}^{\bullet}\mathcal{K}%
\longrightarrow S_{A}^{\bullet}\mathcal{K}$ be the extension of $h_{0}$ as a
derivation. For $\Sigma\in S_{A}^{\bullet}\underline{\mathcal{K}}\otimes
S_{A}^{i}\mathcal{Z}\subset S_{A}^{\bullet}\mathcal{K}$, put
\[
h_{0}\Sigma:=\left\{
\begin{array}
[c]{cc}%
0 & \text{if }i=0\\
\frac{1}{i}h^{\prime}\Sigma & \text{if }i>0
\end{array}
\right.  .
\]
It is easy to see that $(p_{0},j_{0},h_{0})$ are contraction data for
$(S_{A}^{\bullet}\mathcal{K},\delta_{0})$ over $(S_{A}^{\bullet}%
\underline{\mathcal{K}},\underline{\delta})$ extending the previous ones.
Thus, in view of the Homotopy Transfer Theorem, there is an $L_{\infty}%
$-algebra structure $\mathscr{L}=\{\lambda_{k},\ k\in\mathbb{N}\}$ in
$S_{A}^{\bullet}\underline{\mathcal{K}}$ given by Formulas (\ref{For}). Notice
that $h_{0}:S_{A}^{\bullet}\mathcal{K}\longrightarrow S_{A}^{\bullet
}\mathcal{K}$ is $S_{A}^{\bullet}\underline{\mathcal{K}}$-linear, i.e.,
\[
h_{0}(j_{0}\Sigma\odot\Sigma^{\prime})=(-)^{\Sigma}j_{0}\Sigma\odot
h_{0}\Sigma^{\prime},\quad\text{for all }\Sigma\in S_{A}^{\bullet}%
\underline{\mathcal{K}}\text{, }\Sigma^{\prime}\in S_{A}^{\bullet}\mathcal{K}.
\]

\begin{proposition}
The structures $\mathscr{P}$ and $\mathscr{L}$ coincide.
\end{proposition}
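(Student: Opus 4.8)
The plan is to exploit that both $\mathscr{P}$ and $\mathscr{L}$ are $L_{\infty}$-structures on the commutative DG algebra $S_{A}^{\bullet}\underline{\mathcal{K}}$ whose operations are multiderivations of $\odot$, so that, by Remark \ref{RemLRP}, each of them is completely determined by its restriction to the generators $A\oplus\underline{\mathcal{K}}$. The structure $\mathscr{P}$ is a Poisson $L_\infty$-algebra by its very construction. Hence the statement reduces to two claims: (i) $\mathscr{P}$ and $\mathscr{L}$ agree on generators, and (ii) the transferred structure $\mathscr{L}$ is again Poisson, i.e. each $\lambda_{k}$ is a multiderivation of $\odot$. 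Granting (i) and (ii), both structures are the multiderivation extensions of the same operations on $A\oplus\underline{\mathcal{K}}$, whence $\mathscr{P}=\mathscr{L}$.

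For (i), I would first record that the extended contraction data restrict to the original ones on the generators. Indeed $j_{0}$ is the inclusion $S_{A}^{\bullet}\underline{\mathcal{K}}=S_{A}^{\bullet}\underline{\mathcal{K}}\otimes_{A}S_{A}^{0}\mathcal{Z}\hookrightarrow S_{A}^{\bullet}\mathcal{K}$ and $p_{0}$ the projection killing $\mathcal{Z}$, so on $\mathcal{K}=S_{A}^{1}\mathcal{K}$ they coincide with the original $p_{0},j_{0}$; and for $h_{0}$ the prefactor $\tfrac{1}{i}$ equals $1$ when $i=1$, while $h_{0}$ vanishes on $S_{A}^{\bullet}\underline{\mathcal{K}}$, so $h_{0}$ also restricts to the original homotopy on $\mathcal{K}$ (using the original side condition $h_{0}j_{0}=0$). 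Since moreover the Poisson bracket $\{\cdot,\cdot\}$ restricts on $S_{A}^{1}\mathcal{K}\times S_{A}^{1}\mathcal{K}$ to the Lie bracket $[\cdot,\cdot]$ of $\mathcal{K}$, and on $S_{A}^{1}\mathcal{K}\times A$ to the anchor action of $\mathcal{K}$ on $A$, a straightforward induction shows that in Formula (\ref{For}) all intermediate maps $\psi_{\ell},\phi_{\ell}$ keep arguments from $A\oplus\underline{\mathcal{K}}$ inside $S_{A}^{1}\mathcal{K}=\mathcal{K}$, and there reproduce verbatim the transfer formulas defining the $L_{\infty}$-bracket and the $L_{\infty}$-module operations of $\mathscr{Q}$. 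By Remark \ref{RemLRP} these are exactly the restrictions to generators of $\mathscr{P}$, which gives (i).

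The substance of the proof is (ii), which I would establish by induction on the arity $k$, the inputs being that $p_{0}$ and $j_{0}$ are morphisms of graded algebras, that $\{\cdot,\cdot\}$ is a biderivation, and the displayed $S_{A}^{\bullet}\underline{\mathcal{K}}$-linearity of $h_{0}$. Substituting a product $u'\odot u''$ into one slot of $\lambda_{k}=p_{0}\phi_{k}$ and propagating it through the recursion, one finds two kinds of contributions: those in which $u'\odot u''$ enters an arity-one slot $\psi_{1}=-j_{0}$, where multiplicativity of $j_{0}$ together with the biderivation property of $\{\cdot,\cdot\}$ produces directly the Leibniz expansion, and those in which it enters a higher slot $\psi_{m\geq2}=h_{0}\phi_{m}$, where the biderivation property generates, besides the Leibniz terms, spurious terms. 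The main obstacle is precisely the control of these spurious terms: the intermediate operations $\phi_{k},\psi_{k}$ are themselves \emph{not} multiderivations (already $\phi_{3}$ fails to be one), so the Leibniz rule for $\lambda_{k}$ emerges only globally. The mechanism I would rely on is that every spurious term carries a factor lying in $\ker p_{0}$ coming from an $h_{0}\phi_{m}$ with $m\geq2$ (recall the side condition $p_{0}h_{0}=0$), together with the pairwise cancellations forced by the unshuffle signs $\chi(\sigma,\cdot)$ and the graded symmetry of $\{\cdot,\cdot\}$; organizing this bookkeeping so that, after the outermost projection $p_{0}$, only the Leibniz terms survive is the crux of the argument. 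This is the homotopy-transfer incarnation of the general principle that a contraction by algebra morphisms with a module homotopy transfers a DG Poisson structure to a Poisson $L_{\infty}$-structure, the $S_{A}^{\bullet}\underline{\mathcal{K}}$-linearity of $h_{0}$ being exactly the property that makes the Leibniz rule propagate through the homotopies.
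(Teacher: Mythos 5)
Your overall strategy is the same as the paper's: reduce the statement to (i) agreement of $\mathscr{P}$ and $\mathscr{L}$ on generators (the paper phrases this as ``$\mathscr{L}$ extends $\mathscr{Q}$'') plus (ii) the assertion that each transferred operation $\lambda_{k}=p_{0}\phi_{k}$ is a multiderivation, and then to kill the non-Leibniz terms using $p_{0}h_{0}=0$. Part (i) and the reduction are fine. The problem is that you leave (ii) -- which you yourself call ``the crux'' -- unproved, and the mechanism you gesture at is not the one that works.

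Concretely: it is \emph{not} enough that every spurious term ``carries a factor lying in $\ker p_{0}$ coming from an $h_{0}\phi_{m}$,'' because this property is not stable under the recursion. The subspace $\ker p_{0}$ is not a Poisson ideal: when a spurious term produced inside $\psi_{m}=h_{0}\phi_{m}$ is fed into the bracket $\{\cdot,\cdot\}$ at the next stage, the bracket acts as a biderivation, differentiates the factors, and can consume precisely the factor lying in $\mathrm{im}\,h_{0}$, leaving a term that no longer dies under $p_{0}$. The paper closes this loop with two ingredients you are missing. First, the inductive claim is formulated modulo the ideal $I$ of $(S_{A}^{\bullet}\mathcal{K},\odot)$ generated by $\mathrm{im}\,h_{0}$: $\phi_{k}$ is a multiderivation \emph{along} $j_{0}$ up to $I$ (Formula (\ref{EqI})). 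Second, and crucially, one needs the stability fact $h_{0}(I)\subset I\odot I$, which by the Leibniz rule of the bracket gives $\{h_{0}(I),S_{A}^{\bullet}\mathcal{K}\}\subset I$: since every $I$-type error passes through $h_{0}$ before re-entering a bracket, the homotopy doubles its $I$-factor, so the biderivation can destroy at most one of the two and the error stays in $I$; finally $I\subset\ker p_{0}$ (as $p_{0}$ is an algebra morphism with $p_{0}h_{0}=0$) kills it after the outermost projection. Your alternative suggestion that the spurious terms disappear through ``pairwise cancellations forced by the unshuffle signs'' is a red herring -- no cancellations occur or are needed in the actual argument -- so as written your induction does not close, and the missing ideal-stability argument is exactly the content of the paper's proof.
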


\begin{proof}
Since $\mathscr{L}$ extends $\mathscr{Q}$, it is enough to show that the
$\lambda$'s are multiderivations. This can be proved by induction as follows.
I claim that, for any $k$, $\phi_{k}$ is an \textquotedblleft
approximate\textquotedblright\ multiderivation along $j_{0}$ in the following
sense:
\begin{align}
&  \phi_{k}(\Sigma^{\prime}\odot\Sigma^{\prime\prime},\Sigma_{1},\ldots
,\Sigma_{k-1})\nonumber\\
&  =(-)^{\Sigma^{\prime}k}j_{0}\Sigma^{\prime}\odot\phi_{k}(\Sigma
^{\prime\prime},\Sigma_{1},\ldots,\Sigma_{k-1})+(-)^{\Sigma^{\prime\prime
}(\Sigma_{1}+\cdots+\Sigma_{k-1})}\phi_{k}(\Sigma^{\prime},\Sigma_{1}%
,\ldots,\Sigma_{k-1})\odot j_{0}\Sigma^{\prime\prime}+I \label{EqI}
\end{align}
for all $\Sigma^{\prime},\Sigma^{\prime\prime},\Sigma_{1},\ldots,\Sigma
_{k-1}\in S_{A}^{\bullet}\underline{\mathcal{K}}$, where $I$ is the ideal of
$(S_{A}^{\bullet}\mathcal{K},\odot)$ generated by the image of $h_{0}$. Since
$I\subset\ker p_{0}$, it follows from the claim and the side condition
$p_{0}h_{0}=0$ that $\lambda_{k}$ is a multiderivation. Now, prove the claim
by induction on $k$. First of all, a straightforward computation shows that
\[
\phi_{2}(\Sigma^{\prime}\odot\Sigma^{\prime\prime},\Sigma)=j_{0}\Sigma
^{\prime}\odot\phi_{2}(\Sigma^{\prime\prime},\Sigma)+(-)^{\Sigma\Sigma
^{\prime\prime}}\phi_{2}(\Sigma^{\prime},\Sigma)\odot j_{0}\Sigma
^{\prime\prime}.
\]
Now, assume that (\ref{EqI}) holds for all $k\leq n$, and prove it for $k=n+1$.
From skew-symmetry it is enough to check it on equal, odd elements $\Sigma
_{1}=\cdots=\Sigma_{n}=\Sigma$. Put $\boldsymbol{\Sigma}:=(\Sigma^{\prime
}\odot\Sigma^{\prime\prime},\Sigma^{n})$ and compute
\begin{align*}
&  \phi_{n+1}(\Sigma^{\prime}\odot\Sigma^{\prime\prime},\Sigma^{n})\\
&  =2\sum\nolimits_{\ell+m=n}(-)^{b(\ell,m,\boldsymbol{\Sigma})}\tbinom
{\ell+m}{\ell}\{\psi_{\ell+1}(\Sigma^{\prime}\odot\Sigma^{\prime\prime}%
,\Sigma^{\ell}),\psi_{m}(\Sigma^{m})\}\\
&  =-2\{j_{0}\Sigma^{\prime}\odot j_{0}\Sigma^{\prime\prime},h_{0}\phi
_{n}(\Sigma^{n})\}\\
&  \quad\ +2\sum\nolimits_{\ell=1}^{n-1}(-)^{b(\ell,n-\ell,\boldsymbol{\Sigma
})}\tbinom{n}{\ell}\{h_{0}\phi_{\ell+1}(\Sigma^{\prime}\odot\Sigma
^{\prime\prime},\Sigma^{\ell}),h_{0}\phi_{n-\ell}(\Sigma^{n-\ell})\}+I\\
&  =-2j_{0}\Sigma^{\prime}\odot\{j_{0}\Sigma^{\prime\prime},h_{0}\phi
_{n}(\Sigma^{n})\}-2(-)^{\Sigma^{\prime\prime}}\{j_{0}\Sigma^{\prime\prime
},h_{0}\phi_{n}(\Sigma^{n})\}\odot j_{0}\Sigma^{\prime\prime}\\
&  \quad\ +2\sum\nolimits_{\ell=1}^{n-1}(-)^{b(\ell,n-\ell,\boldsymbol{\Sigma
})}\tbinom{n}{\ell}[(-)^{\ell\Sigma^{\prime}}\{j_{0}\Sigma^{\prime}\odot
h_{0}\phi_{\ell+1}(\Sigma^{\prime\prime},\Sigma^{\ell}),h_{0}\phi_{n-\ell
}(\Sigma^{n-\ell})\}\\
&  \quad\ +(-)^{\ell\Sigma^{\prime\prime}}\{h_{0}\phi_{\ell+1}(\Sigma^{\prime
},\Sigma^{\ell})\odot j_{0}\Sigma^{\prime\prime},h_{0}\phi_{n-\ell}%
(\Sigma^{n-\ell})\}]+I\\
&  =(-)^{\Sigma^{\prime}(n+1)}j_{0}\Sigma^{\prime}\odot\phi_{n+1}%
(\Sigma^{\prime\prime},\Sigma^{n})+(-)^{n\Sigma^{\prime\prime}}\phi
_{n+1}(\Sigma^{\prime},\Sigma^{n})\odot j_{0}\Sigma^{\prime\prime}+I
\end{align*}
where I used the fact that, since $h_{0}(I)\subset I\odot I$, then
$\{h_{0}(I),S_{A}^{\bullet}\mathcal{K}\}\subset I$.
\end{proof}

Now, let $(U(\mathcal{K}),\delta)$ be the universal enveloping DG algebra of
$(\mathcal{K},\delta_{0})$, and suppose there is a PBW type isomorphism
$U(\mathcal{K})\approx S_{A}^{\bullet}\mathcal{K}$, i.e., an isomorphism
$\mathsf{PBW}:S_{A}^{\bullet}\mathcal{K}\longrightarrow U(\mathcal{K})$ of
filtered $A$-modules such that diagram
\[%
\xymatrix@C=10pt{ S_A^{\leq k} \mathcal{K}
\ar[rr]^-{\mathsf{PBW}} \ar[dr]  & & U_k(\mathcal{K}) \ar[dl] \\
&          \mathrm{Gr}_k U(\mathcal{K}) &}%
\]
commutes for all $k$, (the map $S_{A}^{\leq k}\mathcal{K}\longrightarrow
\mathrm{Gr}_{k}U(\mathcal{K})$ being the composition of projections
$S_{A}^{\leq k}\mathcal{K}\longrightarrow S_{A}^{k}\mathcal{K}$ and $S_{A}%
^{k}\mathcal{K}\longrightarrow\mathrm{Gr}_{k}U(\mathcal{K})$). Use
$\mathsf{PBW}$ to identify $U(\mathcal{K})$ and $S_{A}^{\bullet}\mathcal{K}$.
Then 1) the filtrations in $U(\mathcal{K})=S_{A}^{\bullet}\mathcal{K}$ and
$S_{A}^{\bullet}\underline{\mathcal{K}}$ are preserved by $\delta
_{0},\underline{\delta},p_{0},j_{0},h_{0}$, and 2) $t(U_{i}(\mathcal{K}%
))\subset U_{i-1}(\mathcal{K})$. It follows from Remark \ref{Rem1} and the
Perturbation Lemma that there are contraction data $(p_{t},j_{t},h_{t})$ for
$(U(\mathcal{K}),\delta)$ over $(S_{A}^{\bullet}\underline{\mathcal{K}%
},\underline{\delta}{}_{t})$. Hence, in view of the Homotopy Transfer Theorem,
there is an $A_{\infty}$-algebra structure on $S_{A}^{\bullet}\underline
{\mathcal{K}}$ canonically determined by the contraction data $(p_{0}%
,j_{0},h_{0})$ and the isomorphism $\mathsf{PBW}$.

{
\begin{remark}
The $A_\infty$-algebra structure (induced as above) on $S^\bullet_A \underline{\mathcal{K}}$ highly depends on the isomorphism $\mathsf{PBW}$ (besides the contraction data), and it could be hard to write explicit formulas in practice. In the case of the $A_\infty$-algebra of a foliation, I will only compute the highest order contributions to the first few homotopies (see Section \ref{SecA} for details).
\end{remark}
}

\begin{example}
\label{Examp}Let $M$ be a smooth manifold, $\mathcal{F}$ a foliation of $M$,
and $C$ its characteristic distribution. Moreover, let $(\mathcal{K}%
,\delta_{0})$ be the deformation complex of $\mathcal{F}$ \cite{cm08} and
$(\underline{\mathcal{K}},\underline{\delta})$ the Chevalley-Eilenberg complex
determined by the Bott connection in $TM/C$ (see Section \ref{SecDGHAF} for
more details). A splitting $TM=C\oplus V$ via a complementary distribution $V$
determines contraction data $(p_{0},j_{0},h_{0})$ for $(\mathcal{K},\delta
_{0})$ over $(\underline{\mathcal{K}},\underline{\delta})$. Accordingly, there
is an $LR_{\infty}$-algebra structure on $\underline{\mathcal{K}}$ which I
described in \cite{vi12} (see also \cite{h05,j12}). In Subsection \ref{PBW} I
show how to construct a PBW isomorphism $U(\mathcal{K})\approx S^{\bullet
}\mathcal{K}$, via purely geometric data (specifically, a connection). One
immediately concludes that there is an $A_{\infty}$-algebra structure on
$S^{\bullet}\underline{\mathcal{K}}$. I partially describe this $A_{\infty}%
$-algebra in Section \ref{SecA}. Here, I present the toy example when
$\mathcal{F}$ has just one leaf and $C=TM$, as an illustration of the main
technical aspects of the general case.

When $C=TM$, the deformation complex of $\mathcal{F}$ is $(\mathrm{Der}%
\Lambda(M),\delta_{0}=[d,\cdot])$, $TM/C=0$, and its Chevalley-Eilenberg
complex $(\underline{\mathcal{K}},\underline{\delta})$ is trivial. Put
$\Lambda:=\Lambda(M)$. There are contraction data $(0,0,h_{0})$ for
$(\mathrm{Der}\Lambda,\delta_{0})$ over the $0$ complex. The contracting
homotopy $h_{0}$ is defined as follows. Every element $\Delta\in
\mathrm{Der}\Lambda$ can be uniquely written as \cite{m08} $\Delta=i_{U}%
+L_{V}$, $U,V\in\Lambda\otimes_M\mathfrak{X}(M)$. Then $h_{0}(\Delta
):=(-)^{\Delta}i_{V}$. The homotopy $h_{0}$ is $\Lambda$-linear. Accordingly,
it determines contraction data $(p_{0},j_{0},h_{0})$ for $(\mathcal{S}%
(\Lambda)=S_{\Lambda}^{\bullet}\mathrm{Der}\Lambda,\delta_{0})$ over
$(S_{\Lambda}^{\bullet}\underline{\mathcal{K}},\underline{\delta}%
)=(\Lambda,d)$, where $p_{0}:S_{\Lambda}^{\bullet}\mathrm{Der}\Lambda
\longrightarrow\Lambda$ and $j_{0}:\Lambda\longrightarrow S_{\Lambda}%
^{\bullet}\mathrm{Der}\Lambda$ are the obvious maps. Notice that the SH
Poisson algebra structure induced on $(\Lambda,d)$ is trivial. The universal
enveloping DG algebra of $\mathrm{Der}\Lambda$ is $(\mathcal{D}(\Lambda
),\delta=[d,{}\cdot{}])$. A PBW isomorphism $\mathcal{D}(\Lambda
)\approx\mathcal{S}(\Lambda)$ can be constructed, exploiting a connection
$\nabla$, as follows (see \cite{f63,nwx99} for similar results). Extend the
covariant derivative $\nabla:\mathfrak{X}(M)\longrightarrow\mathrm{Der}%
\Lambda$ to the whole $\Lambda\otimes_M\mathfrak{X}(M)$ by $\Lambda$-linearity.
For $Z\in\Lambda\otimes_M\mathfrak{X}(M)$, $L_{Z}-\nabla_{Z}=i_{\nabla Z}$. It
follows that every element $\Delta$ in $\mathrm{Der}\Lambda$ can be uniquely
written in the form $\Delta=i_{U}+\nabla_{Z}$, $U,Z\in\Lambda\otimes_M
\mathfrak{X}(M)$, and the correspondence
\[
\Lambda\otimes_M\mathfrak{X}(M)[1]\oplus\Lambda\otimes_M\mathfrak{X}%
(M)\ni(U,Z)\longmapsto i_{U}+\nabla_{Z}\in\mathrm{Der}\Lambda
\]
is a well defined isomorphism of $\Lambda$-modules. Accordingly,
$\mathcal{S}(\Lambda)$ identifies with
\[
S_{\Lambda}^{\bullet}(\Lambda\otimes_M\mathfrak{X}(M)[1])\underset{\Lambda
}{\otimes}S_{\Lambda}^{\bullet}(\Lambda\otimes_M\mathfrak{X}(M))\simeq
\Lambda\otimes_M\Lambda^{\bullet}\mathfrak{X}(M)\otimes_M S^{\bullet}%
\mathfrak{X}(M).
\]
Now, let
\[
\Sigma=\omega\otimes Y_{1}\wedge\cdots\wedge Y_{j}\otimes P\in\Lambda
\otimes_M\Lambda^{j}\mathfrak{X}(M)\otimes_M S^{\ell}\mathfrak{X}(M),
\]
let $\ldots,z^{a},\ldots$ be coordinates in $M$, and let $P$ be locally given
by $P=P^{a_{1}\cdots a_{\ell}}\frac{\partial}{\partial z^{a_{1}}}\odot
\cdots\odot\frac{\partial}{\partial z^{a_{\ell}}}$. Define $\nabla_{P}%
:\Lambda\longrightarrow\Lambda$ via local formulas$\ \nabla_{P}:=P^{a_{1}%
\cdots a_{\ell}}\nabla_{a_{1}}\cdots\nabla_{a_{\ell}}$, and put
\begin{equation}
\mathsf{PBW}(\Sigma):=\omega i_{Y_{1}}\cdots i_{Y_{j}}\nabla_{P}\in
\mathcal{D}_{j+\ell}(\Lambda).\label{5''}%
\end{equation}
The restrictions $\mathsf{PBW}:\mathcal{S}_{i}(\Lambda)\longrightarrow
\mathcal{D}_{i}(\Lambda)$ split the exact sequences $0\longrightarrow
\mathcal{D}_{i-1}(\Lambda)\longrightarrow\mathcal{D}_{i}(\Lambda
)\longrightarrow\mathcal{S}_{i}(\Lambda)\longrightarrow0$, so that
$\mathsf{PBW}$ is the required PBW isomorphism. The Perturbation Lemma gives
now contraction data for $(\mathcal{D}(\Lambda),\delta)$ over $(\Lambda,d)$.
The $A_{\infty}$-algebra structure induced on $(\Lambda,d)$ is again trivial.
\end{example}

\section{Geometric Preliminaries}

\subsection{(A Bit of) Differential Geometry and Homological Algebra of a
Foliation\label{SecDGHAF}}

Let $M$ be a smooth manifold and $C$ an involutive $n$-dimensional
distribution on it. Now on, I will denote by $A$ the algebra of smooth
functions on $M$. I will denote by $C\mathfrak{X}$ the submodule of
$\mathfrak{X}(M)$ made of vector fields in $C$. Let $C\Lambda^{1}%
:=C\mathfrak{X}^{\bot}\subset\Lambda^{1}(M)$ be its annihilator, and put
\[
\overline{\mathfrak{X}}:=\mathfrak{X}(M)/C\mathfrak{X},\quad\overline{\Lambda
}{}^{1}:=\Lambda^{1}(M)/C\Lambda^{1}.
\]
Then $C\Lambda^{1}\simeq\overline{\mathfrak{X}}{}^{\ast}$ and $\overline
{\Lambda}{}^{1}\simeq C\mathfrak{X}^{\ast}$. In view of the Fr\"{o}benius
theorem, there always exist coordinates $\ldots,x^{i},\ldots,u^{\alpha}%
,\ldots$, $i=1,\ldots,n$, $\alpha=1,\ldots,\dim M-n$, adapted to $C$, i.e.,
such that $C\mathfrak{X}$ is locally spanned by $\ldots,\partial_{i}%
:=\partial/\partial x^{i},\ldots$ and $C\Lambda^{1}$ is locally spanned by
$\ldots,du^{\alpha},\ldots$. Consider the Chevalley-Eilenberg algebra
$(\overline{\Lambda},\overline{d})$ of the Lie algebroid $C$. Namely,
$\overline{\Lambda}$ is the exterior algebra of $\overline{\Lambda}{}^{1}$
and
\[
(\overline{d}\lambda)(X_{1},\dots,X_{k+1})=\sum_{i}(-)^{i+1}X_{i}%
(\lambda(\ldots,\widehat{X}_{i},\ldots))+\sum_{i<j}(-)^{i+j}\lambda
([X_{i},X_{j}],\ldots,\widehat{X}_{i},\ldots,\widehat{X}_{j},\ldots),
\]
where $\lambda\in\overline{\Lambda}{}^{k}$ is understood as a $C^{\infty}%
(M)$-valued, $k$-multilinear, skew-symmetric map on $C\mathfrak{X}$ and
$X_{1},\ldots,X_{k+1}\in C\mathfrak{X}$. The DG algebra $(\overline{\Lambda
},\overline{d})$ is the quotient of $(\Lambda(M),d)$ over the differentially
closed ideal generated by $C\Lambda^{1}$ which is made of differential forms
vanishing when acting on vector fields in $C\mathfrak{X}$. In particular, it
is generated by degree $0$, and $\overline{d}$-exact degree $1$ elements. In
the following, I write $\omega\longmapsto\overline{\omega}$ the projection
$\Lambda(M)\longrightarrow\overline{\Lambda}$.

The Lie algebroid $C\mathfrak{X}$ acts on $\overline{\mathfrak{X}}$ via the
\emph{Bott connection}. Namely, write $X\longmapsto\overline{X}$ the
projection $\mathfrak{X}(M)\longrightarrow\overline{\mathfrak{X}}$. Then
\[
X{}\cdot{}\overline{Y}:=\overline{[X,Y]}\in\overline{\mathfrak{X}},\quad X\in
C\mathfrak{X},\text{ }Y\in\mathfrak{X}(M).
\]
Accordingly, there is a DG module $(\overline{\Lambda}\otimes_M\overline
{\mathfrak{X}},\overline{d})$ over $(\overline{\Lambda},\overline{d})$ whose
differential is given by the usual Chevalley-Eilenberg formula:
\begin{align*}
&  (\overline{d}Z)(X_{1},\dots,X_{k+1})\\
&  =\sum_{i}(-)^{i+1}X_{i}\cdot Z(\ldots,\widehat{X}_{i},\ldots)+\sum
_{i<j}(-)^{i+j}Z([X_{i},X_{j}],\ldots,\widehat{X}_{i},\ldots,\widehat{X}%
_{j},\ldots),
\end{align*}
where $Z\in\overline{\Lambda}{}^{k}\otimes_M\overline{\mathfrak{X}}$ is
understood as a $\overline{\mathfrak{X}}$-valued, $k$-multilinear,
skew-symmetric map on $C\mathfrak{X}$, and $X_{1},\ldots,X_{k+1}\in
C\mathfrak{X}$. The tensor product $\Lambda(M)\otimes_M\mathfrak{X}%
(M)\longrightarrow\overline{\Lambda}\otimes_M\overline{\mathfrak{X}}$ of
projections $\Lambda(M)\longrightarrow\overline{\Lambda}$ and $\mathfrak{X}%
(M)\longrightarrow\overline{\mathfrak{X}}$ will be written $Z\longmapsto
\overline{Z}$.

\begin{remark}
\label{RemExt}The differentials $\overline{d}$ in $\overline{\Lambda}$ and
$\overline{\mathfrak{X}}$ can be uniquely extended to the whole tensor algebra%
\[
\bigoplus_{i,j}\overline{\Lambda}\otimes_M\overline{\mathfrak{X}}{}^{\otimes
i}\otimes_M(C\Lambda^{1})^{\otimes j},
\]
requiring Leibniz rules with respect to tensor products and contractions. Such
extension is nothing but the Chevalley-Eilenberg differential associated to
the canonical action of $C\mathfrak{X}$ on $\bigoplus_{i,j}\overline
{\mathfrak{X}}{}^{\otimes i}\otimes_M(C\Lambda^{1})^{\otimes j}$. In particular,
$\overline{d}$ extends to an homological derivation $\overline{d}%
_{\mathcal{S}}$ of $\overline{\Lambda}\otimes_M S^{\bullet}\overline
{\mathfrak{X}}$.
\end{remark}

The exact sequence
\[
0\longrightarrow C\mathfrak{X}\longrightarrow\mathfrak{X}(M)\longrightarrow
\overline{\mathfrak{X}}\longrightarrow0
\]
splits. The datum of a splitting is equivalent to the datum of a distribution
$V$ complementary to $C$. From now on fix such a distribution. I will always
identify $\overline{\mathfrak{X}}$ (resp., $\overline{\Lambda}$) with the
corresponding submodule (resp., subalgebra) in $\mathfrak{X}(M)$ (resp.,
$\Lambda(M)$) determined by $V$.

The distribution $V\simeq TM/C$ is locally spanned by vector fields
$\ldots,V_{\alpha},\ldots$ of the form $V_{\alpha}:=\partial/\partial
u^{\alpha}+V_{\alpha}^{i}\partial_{i}$, $\alpha=1,\ldots,\dim M-n$, for some
local functions $\ldots,V_{\alpha}^{i},\ldots$. Moreover,
\begin{equation}
\lbrack\partial_{i},V_{\alpha}]=\partial_{i}V_{\alpha}^{j}\partial_{j}%
,\quad\lbrack V_{\alpha},V_{\beta}]=R_{\alpha\beta}^{i}\partial_{i},
\label{18}%
\end{equation}
where $R_{\alpha\beta}^{i}:=V_{\alpha}V_{\beta}^{i}-V_{\beta}V_{\alpha}^{i}$.

Now, consider the \emph{deformation complex} $(\mathrm{Der}\overline{\Lambda
},\delta_{0}:=[\overline{d},\cdot])$ (see, for instance, \cite{cm08}) of the
integral foliation of $C$. The complementary distribution $V$ determines
$\overline{\Lambda}$-linear contraction data $(p_{0},j_{0},h_{0})$ for
$(\mathrm{Der}\overline{\Lambda},\delta_{0})$ over $(\overline{\Lambda}%
\otimes_M\overline{\mathfrak{X}},\overline{d})$. Accordingly, there is an
$LR_{\infty}$-algebra structure on $\overline{\Lambda}\otimes_M\overline
{\mathfrak{X}}$ (see the second appendix of \cite{vi12}). Recall that the
projection $p_{0}:\mathrm{Der}\overline{\Lambda}\longrightarrow\overline
{\Lambda}\otimes_M\overline{\mathfrak{X}}$ is actually independent of $V$ and is
defined as
\begin{equation}
p_{0}\Delta:=\overline{\Delta|_{C^\infty (M)}},\quad\Delta\in\mathrm{Der}\overline
{\Lambda}.\label{19}%
\end{equation}
The injection $j_{0}:\overline{\Lambda}\otimes_M\overline{\mathfrak{X}%
}\longrightarrow\mathrm{Der}\overline{\Lambda}$ depends on $V$ and is defined
by
\[
(j_{0}Z)(\omega):=\overline{L_{Z}\omega},\quad Z\in\overline{\Lambda}%
\otimes_M\overline{\mathfrak{X}},\quad\omega\in\overline{\Lambda}.
\]
Finally, the homotopy $h_{0}:\mathrm{Der}\overline{\Lambda}\longrightarrow
\mathrm{Der}\overline{\Lambda}$ can be described as follows. First of all, I
prove a useful

\begin{lemma}
An element $\Delta\in\mathrm{Der}\overline{\Lambda}$ can be uniquely written
in the form
\begin{equation}
\Delta=i_{U}+\overline{L}_{V}+\overline{L}_{W}, \label{17}%
\end{equation}
where $U,V\in\overline{\Lambda}\otimes_M C\mathfrak{X}$, $W\in\overline{\Lambda
}\otimes_M\overline{\mathfrak{X}}$, and for $X\in\Lambda(M)\otimes_M
\mathfrak{X}(M)$ I defined $\overline{L}_{X}\in\mathrm{Der}\overline{\Lambda}$
by $\overline{L}_{X}\omega:=\overline{L_{X}\omega}$, $\omega\in\overline
{\Lambda}$.
\end{lemma}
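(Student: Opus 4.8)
The plan is to exploit that $\overline{\Lambda}$ is generated as an $A$-algebra in degrees $0$ and $1$ --- indeed by $A=\overline{\Lambda}{}^{0}$ together with the exact forms $\overline{d}A$, which generate $\overline{\Lambda}{}^{1}$ over $A$ --- so that a derivation $\Delta$ is completely determined by its two restrictions $\Delta|_{A}$ and $\Delta|_{\overline{\Lambda}{}^{1}}$. Since the asserted decomposition is graded, I may assume $\Delta$ homogeneous of operator-degree $p$: contraction $i_{U}$ with $U$ of form-degree $q$ has operator-degree $q-1$, whereas each $\overline{L}_{X}$ preserves form-degree, so the ansatz forces $V,W$ of form-degree $p$ and $U$ of form-degree $p+1$, and summing over degrees recovers the general statement.

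For the \emph{Lie part} I first observe that $\Delta|_{A}\colon A\longrightarrow\overline{\Lambda}{}^{p}$ is a derivation of $A$ valued in the $A$-module $\overline{\Lambda}{}^{p}$, hence corresponds to a unique vector-valued form $\widetilde{X}\in\overline{\Lambda}{}^{p}\otimes_M\mathfrak{X}(M)$ under the canonical identification $\mathrm{Der}(A,\overline{\Lambda}{}^{p})\simeq\overline{\Lambda}{}^{p}\otimes_M\mathfrak{X}(M)$. Using the fixed splitting $\mathfrak{X}(M)=C\mathfrak{X}\oplus\overline{\mathfrak{X}}$ I write $\widetilde{X}=V+W$ with $V\in\overline{\Lambda}{}^{p}\otimes_M C\mathfrak{X}$ and $W\in\overline{\Lambda}{}^{p}\otimes_M\overline{\mathfrak{X}}$ uniquely determined. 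A direct check on $\overline{d}f$, where $\overline{L}_{\omega\otimes X}f=X(f)\,\omega$ up to sign, then shows $(\overline{L}_{V}+\overline{L}_{W})|_{A}=\Delta|_{A}$; here I use that $\overline{L}_{X}=\pi\circ L_{X}\circ\iota$ is genuinely a derivation of $\overline{\Lambda}$, with $\pi\colon\Lambda(M)\longrightarrow\overline{\Lambda}$ an algebra map splitting the inclusion $\iota$. Because every $i_{U}$ vanishes on $A=\overline{\Lambda}{}^{0}$, the pair $(V,W)$ is forced to be exactly this one, which yields uniqueness of $V$ and $W$.

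It then remains to treat $\Theta:=\Delta-\overline{L}_{V}-\overline{L}_{W}$, a derivation of degree $p$ with $\Theta|_{A}=0$, and to show it equals $i_{U}$ for a unique $U\in\overline{\Lambda}{}^{p+1}\otimes_M C\mathfrak{X}$. Since $\Theta$ kills $A$, the Leibniz rule forces $\Theta|_{\overline{\Lambda}{}^{1}}$ to be $A$-linear, i.e.\ an element of $\mathrm{Hom}_{A}(\overline{\Lambda}{}^{1},\overline{\Lambda}{}^{p+1})$. Using $\overline{\Lambda}{}^{1}\simeq C\mathfrak{X}^{\ast}$ together with the finite generation and projectivity of $C\mathfrak{X}$ (so that $C\mathfrak{X}$ is reflexive), this $\mathrm{Hom}$-module is identified with $\overline{\Lambda}{}^{p+1}\otimes_M C\mathfrak{X}$, producing the desired $U$; by construction $i_{U}$ agrees with $\Theta$ on $A$ and on $\overline{\Lambda}{}^{1}$, hence on all algebra generators, so $\Theta=i_{U}$ and $U$ is unique.

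The main obstacle is precisely this last step: the identification of the \emph{algebraic} derivations (those vanishing on $\overline{\Lambda}{}^{0}$) with the contractions $i_{U}$, $U\in\overline{\Lambda}\otimes_M C\mathfrak{X}$. This is the relative analogue of the classical Fr\"olicher--Nijenhuis decomposition $\Delta=i_{U}+L_{V}$ used in the absolute case (cf.\ \cite{m08}), the essential difference being that $\overline{\Lambda}{}^{1}$ is the annihilator-dual $C\mathfrak{X}^{\ast}$ rather than $\mathfrak{X}(M)^{\ast}$, so that contractions can only consume the longitudinal directions $C\mathfrak{X}$. This is exactly what confines both $U$ and the whole algebraic part to $\overline{\Lambda}\otimes_M C\mathfrak{X}$, while the freedom to differentiate functions in transverse directions is what produces the extra summand $\overline{L}_{W}$. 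Everything else is the bookkeeping that a derivation of an algebra generated in degrees $0$ and $1$ is determined by its values on generators.
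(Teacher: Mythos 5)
Your proof is correct, and its skeleton is the same as the paper's: reduce to the multiplicative generators of $\overline{\Lambda}$ (functions and degree-one elements), read off the pair $(V,W)$ from the restriction $\Delta|_{C^{\infty}(M)}$ through the fixed splitting $\mathfrak{X}(M)=C\mathfrak{X}\oplus\overline{\mathfrak{X}}$, and then identify the residual derivation, which kills $C^{\infty}(M)$, as a contraction $i_{U}$. The genuine difference is in how $U$ is produced. The paper exhibits it in closed form, $U:=[\Delta,\overline{d}]|_{C^{\infty}(M)}+(-)^{\Delta}\overline{d}W$, and verifies the decomposition by evaluating both sides on $f$ and $\overline{d}f$ with the help of the identity $[\overline{d},\overline{L}_{X}]=\overline{L}_{\overline{d}\,\overline{X}}$. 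You instead observe that the residual derivation $\Theta$ is $C^{\infty}(M)$-linear on $\overline{\Lambda}{}^{1}$ and appeal to the isomorphism $\mathrm{Hom}_{C^{\infty}(M)}(\overline{\Lambda}{}^{1},\overline{\Lambda}{}^{p+1})\simeq\overline{\Lambda}{}^{p+1}\otimes_{M}C\mathfrak{X}$, valid because $\overline{\Lambda}{}^{1}\simeq C\mathfrak{X}^{\ast}$ and $C\mathfrak{X}$ are finitely generated projective, hence reflexive; since the map $U\longmapsto i_{U}|_{\overline{\Lambda}{}^{1}}$ realizes exactly this isomorphism, existence and uniqueness of $U$ come for free. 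Your route buys a cleaner uniqueness argument and avoids any computation involving $\overline{d}$, at the price of characterizing $U$ only implicitly; the paper's route is constructive, giving an explicit formula for $U$ in terms of $\Delta$, in keeping with the computational style of the rest of the section. Both arguments correctly use the facts that $i_{U}$ vanishes on $C^{\infty}(M)$ (which isolates $V+W$) and that two derivations agreeing on $\overline{\Lambda}{}^{0}$ and $\overline{\Lambda}{}^{1}$ coincide, so each constitutes a complete proof.
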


\begin{proof}
It is easy to check the following identity
\[
\lbrack\overline{d},\overline{L}_{X}]=\overline{L}_{\overline{d}\,\overline
{X}},\quad X\in\overline{\Lambda}\otimes_M\mathfrak{X}(M).
\]
Now, let $\Delta\in\mathrm{Der}\overline{\Lambda}$, put
\[
W:=p_{0}\Delta,\quad V:=\Delta|_{C^\infty (M)}-p_{0}\Delta,\quad U:=[\Delta,\overline
{d}]|_{C^\infty (M)}+(-)^{\Delta}\overline{d}W
\]
and check (\ref{17}). It is enough to evaluate both sides of (\ref{17}) on
generators. Thus, for all $f\in C^\infty (M)$,
\[
\Delta f=(V+W)f=(i_{U}+\overline{L}_{V}+\overline{L}_{W})f
\]
Similarly,
\begin{align*}
\Delta\overline{d}f &  =[\Delta,\overline{d}]f+(-)^{\Delta}\overline{d}\Delta
f\\
&  =Uf-(-)^{\Delta}(\overline{d}W)(f)+(-)^{\Delta}\overline{d}(V+W)f\\
&  =i_{U}\overline{d}f-(-)^{\Delta}[\overline{d},\overline{L}_{W}%
]f+(-)^{\Delta}\overline{d}\overline{L}_{W}f+(-)^{\Delta}\overline{d}%
\overline{L}_{V}f\\
&  =(i_{U}+\overline{L}_{V}+\overline{L}_{W})\overline{d}f.
\end{align*}

\end{proof}

Then $h_{0}$ is given by
\[
h_{0}(i_{U}+\overline{L}_{V}+\overline{L}_{W})=(-)^{\Delta}i_{V},\quad
U,V\in\overline{\Lambda}\otimes_M C\mathfrak{X},\quad W\in\overline{\Lambda
}\otimes_M\overline{\mathfrak{X}}.
\]

\subsection{Differential Operators on a Foliated Manifold}

In $\mathcal{D}(M):=\mathcal{D}(C^\infty (M))$, consider the left ideal $\mathcal{D}%
(M)\circ C\mathfrak{X}$ generated by $C\mathfrak{X}$. Denote by $\overline
{\mathcal{D}}$ the quotient left $\mathcal{D}(M)$-module $\mathcal{D}%
(M)/\mathcal{D}(M)\circ C\mathfrak{X}$, and write $\square\longmapsto
\overline{\square}$ the projection $\mathcal{D}(M)\longrightarrow
\overline{\mathcal{D}}$. More generally, let $Q$ be the module of sections of
a vector bundle over $M$. Consider the submodule $\mathcal{D}(M,Q)\circ
C\mathfrak{X}$ in $\mathcal{D}(M,Q) := \mathcal{D}(C^\infty (M),Q)\simeq Q\otimes_M\mathcal{D}(M)$, and the
quotient $\overline{\mathcal{D}}(M,Q):=\mathcal{D}(M,Q)/\mathcal{D}(M,Q)\circ
C\mathfrak{X}$, and write again $\square\longmapsto\overline{\square}$ the
projection $\mathcal{D}(M,Q)\longrightarrow\overline{\mathcal{D}}(M,Q)$.
Clearly, $\overline{\mathcal{D}}(M,Q)\simeq Q\otimes_M\overline{\mathcal{D}}$,
and in the following I will often understand this canonical isomorphism.

The Lie algebroid $C\mathfrak{X}$ acts on $\overline{\mathcal{D}}$ as follows
\[
X\cdot\overline{\square}:=\overline{X\circ\square}=\overline{[X,\square
]},\quad X\in C\mathfrak{X},\quad\square\in\mathcal{D}(M).
\]
Notice that $\overline{\mathfrak{X}}$ can be understood as a submodule in
$\overline{\mathcal{D}}$ and the action of $C\mathfrak{X}$ on $\overline
{\mathfrak{X}}$ as the restricted action. Accordingly, the Chevalley-Eilenberg
complex $(\overline{\Lambda}\otimes_M\overline{\mathfrak{X}},\overline{d})$
extends to a Chevalley-Eilenberg complex $(\overline{\Lambda}\otimes_M
\overline{\mathcal{D}},\overline{d}_{\mathcal{D}})$ in an obvious way.

\begin{remark}
\label{Rem2}The differential
\[
\overline{d}_{\mathcal{D}}:\overline{\Lambda}\otimes_M\overline{\mathcal{D}%
}\longrightarrow\overline{\Lambda}\otimes_M\overline{\mathcal{D}}%
\]
identifies with
\[
\overline{d}_{\ast}:\overline{\mathcal{D}}(M,\overline{\Lambda})\ni
\overline{\square}\longmapsto\overline{d}_{\ast}\overline{\square}%
:=\overline{\overline{d}\circ\square}\in\overline{\mathcal{D}}(M,\overline
{\Lambda}),\quad\square\in\mathcal{D}(M,\overline{\Lambda}).
\]
Indeed, it is easy to see that both $\overline{d}_{\mathcal{D}}$ and
$\overline{d}_{\ast}$ are graded derivations subordinate to $\overline{d}$.
Therefore, it is enough to prove that they coincide on generators, namely, on
$\overline{\mathcal{D}}$. Let $\square\in\mathcal{D}(M,\overline{\Lambda
})=\overline{\Lambda}\otimes_M\mathcal{D}$. Since the isomorphism $\overline
{\Lambda}\otimes_M\mathcal{D}\longrightarrow\mathcal{D}(M,\overline{\Lambda})$
is given by $\omega\otimes\square\longmapsto\omega\square$, then
\[
\overline{\langle\square,X\rangle}=\langle\overline{\square},X\rangle,\quad
X\in C\mathfrak{X},
\]
where I indicated with $\langle W,X\rangle$ the contraction of $W\in
\overline{\Lambda}{}^{1}\otimes_M Q$ with a vector fields $X\in C\mathfrak{X}$.
Thus,%
\[
\langle\overline{d}_{\mathcal{D}}\overline{\square}|X\rangle=X\cdot
\overline{\square} =\overline{X\circ\square}=\overline{\langle\overline
{d}\circ\square|X\rangle}=\langle\overline{d}_{\ast}\overline{\square
}|X\rangle.
\]

\end{remark}

The module $\overline{\Lambda}\otimes_M\overline{\mathcal{D}}$ inherits a
filtration
\[
\overline{\Lambda}\otimes_M\overline{\mathcal{D}}_{0}\subset\overline{\Lambda
}\otimes_M\overline{\mathcal{D}}_{1}\subset\cdots\subset\overline{\Lambda
}\otimes_M\overline{\mathcal{D}}_{i}\subset\cdots\subset\overline{\Lambda
}\otimes_M\overline{\mathcal{D}}%
\]
from $\overline{\Lambda}\otimes_M\mathcal{D}$, and 1) the projection
$\mathcal{D}(M,Q)\longrightarrow\overline{\mathcal{D}}(M,Q)$, and 2) the
differential $\overline{d}_{\mathcal{D}}$, preserve this filtration.
Accordingly, the graded object $\mathrm{Gr}(\overline{\Lambda}\otimes_M
\overline{\mathcal{D}})$ identifies with $\overline{\Lambda}\otimes_M
S^{\bullet}\overline{\mathfrak{X}}$, and inherits a differential $\overline
{d}_{\mathcal{S}}$ which coincides with the one in Remark \ref{RemExt}. In
particular, $(\overline{\Lambda}\otimes_M S^{\bullet}\overline{\mathfrak{X}%
},\overline{d}_{\mathcal{S}})$ is a DG commutative algebra.

Consider again the complementary distribution $V$, and notice that, in view of
commutation relations (\ref{18}), $\overline{\mathcal{D}}$ is locally spanned
by
\[
V_{\alpha_{1}\cdots\alpha_{i}}:=\overline{V_{(\alpha_{1}}\cdots V_{\alpha
_{i})}},\quad i\geq0,
\]
and they are independent generators.

Now, consider the universal enveloping DG algebra $(\mathcal{D}(\overline
{\Lambda}),\delta_{\mathcal{D}})$ of the deformation complex $(\mathrm{Der}%
\overline{\Lambda},\delta_{0})$. In Section \ref{SecA} I show that the
contraction data $(p_{0},j_{0},h_{0})$ for $(\mathrm{Der}\overline{\Lambda
},\delta_{0})$ over $(\overline{\Lambda}\otimes_M\overline{\mathfrak{X}%
},\overline{d})$ extend to contraction data $(p,j,h)$ for $(\mathcal{D}%
(\overline{\Lambda}),\delta_{\mathcal{D}})$ over $(\overline{\Lambda}%
\otimes_M\overline{\mathcal{D}},\overline{d}_{\mathcal{D}})$. Here, I take only
two steps in this direction. Firstly, I define the projection $p:\mathcal{D}%
(\overline{\Lambda})\longrightarrow\overline{\Lambda}\otimes_M\overline
{\mathcal{D}}$, which is given by
\[
p\square:=\overline{\square|_{C^\infty (M)}},\quad\square\in\mathcal{D}(\overline
{\Lambda}),
\]
and clearly extends $p_{0}$ in (\ref{19}). Moreover, in view of Remark
\ref{Rem2} and the fact that $\overline{d}:C^\infty (M)\longrightarrow \overline
{\Lambda}$ does actually belong to $\mathcal{D}(M,\overline{\Lambda})\circ
C\mathfrak{X}$, $p:\mathcal{D}(\overline{\Lambda})\longrightarrow
\overline{\Lambda}\otimes_M\overline{\mathcal{D}}$ is a cochain map. Notice
that, as $p_{0}$, $p$ is canonical, i.e., it doesn't depend on any other
structure than the distribution $C$. Secondly, I consider the graded DG object
$(\mathcal{S}(\overline{\Lambda})\simeq S_{\overline{\Lambda}}^{\bullet
}\mathrm{Der}\overline{\Lambda},\delta_{\mathcal{S}})$ of $(\mathcal{D}%
(\overline{\Lambda}),\delta_{\mathcal{D}})$ and extend the contraction data
for $(\mathrm{Der}\overline{\Lambda},\delta_{0})$ over $(\overline{\Lambda
}\otimes_M\overline{\mathfrak{X}},\overline{d})$ to contraction data for
$(\mathcal{S}(\overline{\Lambda}),\delta_{\mathcal{S}})$ over $(S_{\overline
{\Lambda}}^{\bullet}(\overline{\Lambda}\otimes_M\overline{\mathfrak{X}}%
)\simeq\overline{\Lambda}\otimes_M S^{\bullet}\overline{\mathfrak{X}}%
,\overline{d}_{\mathcal{S}})$ as in Section \ref{SecHTUE}. The next step is to
construct \textquotedblleft PBW isomorphisms\textquotedblright\
\[
\overline{\Lambda}\otimes_M\overline{\mathcal{D}}\approx\overline{\Lambda
}\otimes_M S^{\bullet}\overline{\mathfrak{X}},\quad\mathcal{D}(\overline
{\Lambda})\approx\mathcal{S}(\overline{\Lambda}).
\]
This can be done exploiting an \emph{adapted connection}. I devote the next
section to the introduction of this geometric structure.

\subsection{Adapted Connections}

In this section $C,V$ are complementary distributions on $M$. I don't require
$C$ to be involutive. The above definitions of $C\mathfrak{X}$, $\overline
{\mathfrak{X}}$, $C\Lambda^{1}$, and $\overline{\Lambda}{}^{1}$ are still
valid in the present general situation. Moreover, let
\begin{align*}
\mathfrak{X}(M) &  \ni X\longmapsto CX\in C\mathfrak{X}\\
\Lambda^{1}(M) &  \ni\omega\longmapsto\omega^{C}\in C\Lambda^{1}%
\end{align*}
be the projections. The pair $(C,V)$ determines a distinguished class of
connections according to the following

\begin{definition}
The connection $\nabla$ is called \emph{adapted to the pair }$(C,V)$ (or
simply \emph{adapted}) if

\begin{enumerate}
\item it restricts to $\overline{\Lambda}{}^{1}$, i.e., $\nabla_{X}\omega
\in\overline{\Lambda}{}^{1}$ for all $X\in\mathfrak{X}(M)$ and $\omega
\in\overline{\Lambda}{}^{1}$,

\item it restricts to $C\Lambda^{1}$, i.e., $\nabla_{X}\omega\in C\Lambda^{1}$
for all $X\in\mathfrak{X}(M)$ and $\omega\in C\Lambda^{1}$,

\item $\nabla_{Y}\omega=\overline{L_{Y}\omega}$ for all $Y\in\overline
{\mathfrak{X}}$, $\omega\in\overline{\Lambda}{}^{1}$,

\item $\nabla_{X}\omega=(L_{X}\omega)^{C}$ for all $X\in C\mathfrak{X}$,
$\omega\in C\Lambda^{1}$.
\end{enumerate}
\end{definition}

\begin{proposition}
There exist adapted connections.
\end{proposition}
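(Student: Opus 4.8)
The plan is to read conditions (1)--(2) as the statement that $\nabla$ preserves the splitting $\Lambda^{1}(M)=\overline{\Lambda}{}^{1}\oplus C\Lambda^{1}$ determined by $(C,V)$, and then to build the two ``diagonal blocks'' of $\nabla$ separately. Indeed, (1) and (2) hold precisely when $\nabla$ is block diagonal with respect to $\overline{\Lambda}{}^{1}\oplus C\Lambda^{1}$, i.e. $\nabla=\nabla'\oplus\nabla''$ for a connection $\nabla'$ in $\overline{\Lambda}{}^{1}$ and a connection $\nabla''$ in $C\Lambda^{1}$. Under this decomposition condition (3) constrains only $\nabla'$, and only in the directions $Y\in\overline{\mathfrak{X}}=\Gamma(V)$, while condition (4) constrains only $\nabla''$, and only in the directions $X\in C\mathfrak{X}=\Gamma(C)$. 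Since these prescriptions live on complementary subbundles and along complementary directions of $TM$, they never interfere, and the remaining ``off'' components ($\nabla'$ along $C$, $\nabla''$ along $V$) stay free. So I will first check that the prescriptions of (3) and (4) define genuine partial connections, then extend them freely and reassemble.

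The heart of the argument is to verify that the assignment $(Y,\omega)\mapsto\overline{L_{Y}\omega}$, for $Y\in\Gamma(V)$ and $\omega\in\Gamma(\overline{\Lambda}{}^{1})$ (and symmetrically $(X,\omega)\mapsto(L_{X}\omega)^{C}$ for $X\in C\mathfrak{X}$, $\omega\in C\Lambda^{1}$), is $C^{\infty}(M)$-linear in the vector field and satisfies the Leibniz rule in the form, hence is a partial connection. The Leibniz rule is immediate from $L_{Y}(g\omega)=(Yg)\omega+gL_{Y}\omega$ together with $\overline{\omega}=\omega$ on $\overline{\Lambda}{}^{1}$. Tensoriality is the only point with content: from the Cartan identity
\[
L_{fY}\omega=f\,L_{Y}\omega+\omega(Y)\,df
\]
and the fact that $\omega\in\overline{\Lambda}{}^{1}$ annihilates $V\ni Y$, the term $\omega(Y)\,df$ vanishes, whence $\overline{L_{fY}\omega}=f\,\overline{L_{Y}\omega}$. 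The identical computation, now using that $\omega\in C\Lambda^{1}$ annihilates $C\ni X$, gives $(L_{fX}\omega)^{C}=f\,(L_{X}\omega)^{C}$. Thus both prescriptions are bona fide partial connections.

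Finally, I will extend each partial connection to a full one and reassemble. Choosing any auxiliary connections $\widetilde{\nabla}'$ in $\overline{\Lambda}{}^{1}$ and $\widetilde{\nabla}''$ in $C\Lambda^{1}$ (which exist globally) and writing $X=CX+VX$ with $VX:=X-CX\in\Gamma(V)$, I set
\[
\nabla'_{X}\omega:=\widetilde{\nabla}'_{CX}\omega+\overline{L_{VX}\omega},\qquad\nabla''_{X}\eta:=\widetilde{\nabla}''_{VX}\eta+(L_{CX}\eta)^{C},
\]
for $\omega\in\overline{\Lambda}{}^{1}$ and $\eta\in C\Lambda^{1}$; by the tensoriality just checked these are connections, and they restrict to the prescribed partial connections (take $X=Y\in\Gamma(V)$ in the first, $X\in C\mathfrak{X}$ in the second). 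Setting $\nabla:=\nabla'\oplus\nabla''$ then produces a connection on $\Lambda^{1}(M)$ (equivalently on $TM$) that is block diagonal, hence obeys (1)--(2), and obeys (3)--(4) by construction. I expect no genuine obstruction here: the four conditions are affine constraints on the affine space of connections, and they are preserved under the convex combinations one forms with a partition of unity, so there is no global cohomological obstruction once a solution exists. The only step that really requires care is the tensoriality check, which is exactly where the annihilation properties $\omega(V)=0$ on $\overline{\Lambda}{}^{1}$ and $\eta(C)=0$ on $C\Lambda^{1}$ are used.
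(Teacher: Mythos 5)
Your proposal is correct and follows essentially the same route as the paper: the paper's proof fixes a single fiducial connection $\tilde{\nabla}$ and sets $\nabla_{X}\omega=((\tilde{\nabla}_{\overline{X}}+L_{CX})\omega^{C})^{C}+\overline{(\tilde{\nabla}_{CX}+L_{\overline{X}})\overline{\omega}}$, which is precisely your block-diagonal connection $\nabla'\oplus\nabla''$ with $\widetilde{\nabla}'$ and $\widetilde{\nabla}''$ taken to be the projections of $\tilde{\nabla}$ to the two subbundles. The decisive step is also the same in both arguments, namely $C^{\infty}(M)$-linearity of the Lie-derivative terms in the vector field, which the paper proves via $df\wedge i_{CX}\omega^{C}=0$ and $df\wedge i_{\overline{X}}\overline{\omega}=0$ --- exactly your annihilation identities.
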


\begin{proof}
Let $\tilde{\nabla}$ be a fiducial connection. For $X\in\mathfrak{X}(M)$ and
$\omega\in\Lambda^{1}(M)$ put
\begin{equation}
\nabla_{X}\omega=((\tilde{\nabla}_{\overline{X}}+L_{CX})\omega^{C}%
)^{C}+\overline{(\tilde{\nabla}_{CX}+L_{\overline{X}})\overline{\omega}%
}.\label{9}%
\end{equation}
The operator $\nabla_{X}$ is clearly a derivation subordinate to $X$, i.e.,
$\nabla_{X}f\omega=X(f)\omega+f\nabla_{X}\omega$. Moreover, $\nabla_{X}$ is
$C^\infty (M)$-linear in $X$. Indeed, for $f\in C^\infty (M)$
\begin{align*}
L_{CfX}\omega^{C} &  =fL_{CX}\omega^{C}+df\wedge i_{CX}\omega^{C}%
=fL_{CX}\omega^{C}\\
L_{f\overline{X}}\overline{\omega} &  =fL_{\overline{X}}\overline{\omega
}+df\wedge i_{\overline{X}}\overline{\omega}=fL_{\overline{X}}\overline
{\omega}.
\end{align*}
Thus, the correspondence $X\longmapsto\nabla_{X}$ is a linear connection. The
four properties of adapted connections are obvious.
\end{proof}

\begin{proposition}
Let $\nabla$ be an adapted connection determined by a connection
$\tilde{\nabla}$ via Formula (\ref{9}). Then
\begin{equation}
\nabla_{X}Y=\overline{(\tilde{\nabla}_{\overline{X}}+L_{CX})\overline{Y}%
}+C(\tilde{\nabla}_{CX}+L_{\overline{X}})CY. \label{8}%
\end{equation}
In particular,

\begin{enumerate}
\item $\nabla$ restricts to $\overline{\mathfrak{X}}$, i.e., $\nabla_{X}%
Y\in\overline{\mathfrak{X}}$ for all $X\in\mathfrak{X}(M)$ and $Y\in
\overline{\mathfrak{X}}$,

\item $\nabla$ restricts to $C\mathfrak{X}$, i.e., $\nabla_{X}Y\in
C\mathfrak{X}$ for all $X\in\mathfrak{X}(M)$ and $Y\in C\mathfrak{X}$,

\item $\nabla_{Y}X=C[Y,X]$, and $\nabla_{X}Y=\overline{[Y,X]}$ for all
$Y\in\overline{\mathfrak{X}}$, $X\in C\mathfrak{X}$.
\end{enumerate}
\end{proposition}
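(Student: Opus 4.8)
The plan is to use the fact that, in this paper, a connection on $TM$ is by definition dual to the given connection on $T^{\ast}M$. Thus $\nabla_{X}Y$ is characterised by the contraction identity
\[
X\langle\omega,Y\rangle=\langle\nabla_{X}\omega,Y\rangle+\langle\omega,\nabla_{X}Y\rangle,\qquad\omega\in\Lambda^{1}(M),
\]
and, the pairing being nondegenerate, formula (\ref{8}) will follow once I check that its right-hand side satisfies this identity with $\nabla_{X}\omega$ given by (\ref{9}). First I would record two structural facts. (i) The pairing between $\Lambda^{1}(M)$ and $\mathfrak{X}(M)$ is ``block off-diagonal'': $C\Lambda^{1}=C\mathfrak{X}^{\bot}$ annihilates $C\mathfrak{X}$, whereas $\overline{\Lambda}{}^{1}\simeq V^{\bot}$ annihilates $\overline{\mathfrak{X}}=V$; hence $\langle\omega,Y\rangle=\langle\omega^{C},\overline{Y}\rangle+\langle\overline{\omega},CY\rangle$, and inside such a pairing any projection $(\cdot)^{C}$ or $\overline{(\cdot)}$ may be freely inserted or deleted in front of an argument lying in the complementary summand, the discarded component pairing to zero. (ii) Both $\tilde{\nabla}_{Z}$ and $L_{Z}$ obey the Leibniz rule for contraction, $Z\langle\alpha,W\rangle=\langle\tilde{\nabla}_{Z}\alpha,W\rangle+\langle\alpha,\tilde{\nabla}_{Z}W\rangle=\langle L_{Z}\alpha,W\rangle+\langle\alpha,[Z,W]\rangle$.

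Next I would plug (\ref{9}) into the first summand and the right-hand side of (\ref{8}) into the second. After deleting the transparent projections via (i), the sum $\langle\nabla_{X}\omega,Y\rangle+\langle\omega,\nabla_{X}Y\rangle$ splits into the $C\Lambda^{1}$--$\overline{\mathfrak{X}}$ block
\[
\langle(\tilde{\nabla}_{\overline{X}}+L_{CX})\omega^{C},\overline{Y}\rangle+\langle\omega^{C},(\tilde{\nabla}_{\overline{X}}+L_{CX})\overline{Y}\rangle
\]
and the $\overline{\Lambda}{}^{1}$--$C\mathfrak{X}$ block
\[
\langle(\tilde{\nabla}_{CX}+L_{\overline{X}})\overline{\omega},CY\rangle+\langle\overline{\omega},(\tilde{\nabla}_{CX}+L_{\overline{X}})CY\rangle.
\]
Here it is essential that (\ref{8}) pairs the $\tilde{\nabla}_{\overline{X}}+L_{CX}$ block of $Y$ against exactly the block of $\omega$ carrying the same operator in (\ref{9}), and likewise for $\tilde{\nabla}_{CX}+L_{\overline{X}}$. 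By (ii) the $\tilde{\nabla}$-terms in the first block telescope to $\overline{X}\langle\omega^{C},\overline{Y}\rangle$ and the $L$-terms to $CX\langle\omega^{C},\overline{Y}\rangle$, so the block equals $X\langle\omega^{C},\overline{Y}\rangle$; symmetrically the second block equals $X\langle\overline{\omega},CY\rangle$. Summing and applying (i) once more recovers $X\langle\omega,Y\rangle$, establishing (\ref{8}).

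Finally, the three ``in particular'' assertions would drop out by substitution into (\ref{8}). For $Y\in\overline{\mathfrak{X}}$ one has $CY=0$, so $\nabla_{X}Y=\overline{(\tilde{\nabla}_{\overline{X}}+L_{CX})Y}\in\overline{\mathfrak{X}}$, giving (1); for $Y\in C\mathfrak{X}$ one has $\overline{Y}=0$, so $\nabla_{X}Y=C(\tilde{\nabla}_{CX}+L_{\overline{X}})Y\in C\mathfrak{X}$, giving (2). For (3), with $Y\in\overline{\mathfrak{X}}$ and $X\in C\mathfrak{X}$, differentiating $X$ along $Y$ kills $CY$ and $\overline{X}$ and leaves $\nabla_{Y}X=C\,L_{Y}X=C[Y,X]$, while differentiating $Y$ along $X$ kills $\overline{X}$ and $CY$ and leaves $\nabla_{X}Y=\overline{L_{X}Y}=\overline{[X,Y]}$ (the Bott connection). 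The main obstacle I anticipate is purely the sign-and-projection bookkeeping of the central step: each projection must be dropped only against an argument in the complementary distribution, and the two Leibniz identities must be applied with matching directions $\overline{X}$ and $CX$; beyond this, nothing deeper than duality and the block structure of the pairing is required.
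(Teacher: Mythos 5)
Your proposal is correct and follows essentially the same route as the paper: both characterize $\nabla_{X}Y$ through the duality (contraction) identity together with Formula (\ref{9}), split the pairing into its $C\Lambda^{1}$--$\overline{\mathfrak{X}}$ and $\overline{\Lambda}{}^{1}$--$C\mathfrak{X}$ blocks, and telescope via the Leibniz rules for $\tilde{\nabla}$ and the Lie derivative; the only difference is presentational, in that the paper solves directly for $\langle\nabla_{X}Y,\omega\rangle$ while you verify that the right-hand side of (\ref{8}) satisfies the defining identity. One remark on item (3): your computation gives $\nabla_{X}Y=\overline{L_{X}Y}=\overline{[X,Y]}$, whereas the proposition as printed states $\overline{[Y,X]}$; the sign you obtained is the one actually implied by (\ref{8}) (and the one compatible with the later claim that $\nabla$ extends the Bott connection), so the printed formula is an order typo in the statement rather than a gap in your argument.
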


\begin{proof}
Let $X,Y\in\mathfrak{X}(M)$, and $\omega\in\Lambda^{1}(M)$.
\begin{align*}
\langle\nabla_{X}Y,\omega\rangle &  =X\langle Y,\omega\rangle-\langle
Y,\nabla_{X}\omega\rangle\\
&  =X\langle Y,\omega\rangle-\langle\overline{Y},(\tilde{\nabla}_{\overline
{X}}+L_{CX})\omega^{C}\rangle-\langle CY,(\tilde{\nabla}_{CX}+L_{\overline{X}%
})\overline{\omega}\rangle\\
&  =X\langle Y,\omega\rangle-X\langle\overline{Y},\omega^{C}\rangle
+\langle(\tilde{\nabla}_{\overline{X}}+L_{CX})\overline{Y},\omega^{C}%
\rangle-X\langle CY,\overline{\omega}\rangle+\langle(\tilde{\nabla}%
_{CX}+L_{\overline{X}})CY,\overline{\omega}\rangle\\
&  =\langle\overline{(\tilde{\nabla}_{\overline{X}}+L_{CX})\overline{Y}%
}+C(\tilde{\nabla}_{CX}+L_{\overline{X}})CY,\omega\rangle.
\end{align*}

\end{proof}

It is easy to see that every adapted connection is of the form (\ref{9}): for
an adapted connection $\nabla$ it is enough to put $\tilde{\nabla}=\nabla$.
Indeed,
\begin{align*}
((\nabla_{\overline{X}}+L_{CX})\omega^{C})^{C}+\overline{(\nabla
_{CX}+L_{\overline{X}})\overline{\omega}}  &  =(\nabla_{\overline{X}}%
+L_{CX})\omega^{C}+(\nabla_{CX}+L_{\overline{X}})\overline{\omega}\\
&  =\nabla_{X}\omega^{C}+\nabla_{X}\overline{\omega}\\
&  =\nabla_{X}\omega.
\end{align*}

\begin{proposition}
Let $\nabla$ be an adapted connection determined by a connection
$\tilde{\nabla}$ with torsion $\tilde{T}\in\Lambda^{2}(M)\otimes_M
\mathfrak{X}(M)$. The torsion $T$ of $\nabla$ is given by
\[
T(X,Y)=\overline{\tilde{T}(\overline{X},\overline{Y})}+C(\tilde{T}%
(CX,CY))-\overline{[CX,CY]}-C[\overline{X},\overline{Y}],\quad X,Y\in
\mathfrak{X.}%
\]

\end{proposition}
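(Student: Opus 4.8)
The plan is to compute the torsion straight from its definition
\[
T(X,Y)=\nabla_{X}Y-\nabla_{Y}X-[X,Y],
\]
feeding in the explicit formula (\ref{8}) for the covariant derivative and reducing everything to ordinary Lie brackets of the $C\mathfrak{X}$- and $\overline{\mathfrak{X}}$-components of $X$ and $Y$. Writing (\ref{8}) for $\nabla_{X}Y$ and for $\nabla_{Y}X$ (the latter obtained by exchanging $X$ and $Y$) and subtracting, one gets
\[
\nabla_{X}Y-\nabla_{Y}X=\overline{\tilde{\nabla}_{\overline{X}}\overline{Y}-\tilde{\nabla}_{\overline{Y}}\overline{X}}+\overline{L_{CX}\overline{Y}-L_{CY}\overline{X}}+C(\tilde{\nabla}_{CX}CY-\tilde{\nabla}_{CY}CX)+C(L_{\overline{X}}CY-L_{\overline{Y}}CX).
\]

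First I would rewrite the two symmetric combinations of $\tilde{\nabla}$ by means of the torsion of $\tilde{\nabla}$, namely $\tilde{\nabla}_{Z}W-\tilde{\nabla}_{W}Z=\tilde{T}(Z,W)+[Z,W]$, applied to the pairs $(\overline{X},\overline{Y})$ and $(CX,CY)$; this already produces the terms $\overline{\tilde{T}(\overline{X},\overline{Y})}$ and $C(\tilde{T}(CX,CY))$. Simultaneously I would replace every Lie derivative of a vector field by a Lie bracket, $L_{Z}W=[Z,W]$, so that $L_{CX}\overline{Y}-L_{CY}\overline{X}=[CX,\overline{Y}]+[\overline{X},CY]$ and, likewise, $L_{\overline{X}}CY-L_{\overline{Y}}CX=[CX,\overline{Y}]+[\overline{X},CY]$. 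At this point $\nabla_{X}Y-\nabla_{Y}X$ is $\overline{\tilde{T}(\overline{X},\overline{Y})}+C(\tilde{T}(CX,CY))$ plus a combination of the four brackets $[\overline{X},\overline{Y}]$, $[CX,\overline{Y}]$, $[\overline{X},CY]$, $[CX,CY]$, each projected onto $C\mathfrak{X}$ or $\overline{\mathfrak{X}}$.

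Finally I would expand $[X,Y]=[CX+\overline{X},\,CY+\overline{Y}]$ by bilinearity into the same four brackets, split each into its $C\mathfrak{X}$- and $\overline{\mathfrak{X}}$-parts, and subtract. The only place care is needed is the bookkeeping of these components. The two mixed brackets $[CX,\overline{Y}]$ and $[\overline{X},CY]$ appear in full (both their $C\mathfrak{X}$- and $\overline{\mathfrak{X}}$-parts) in $\nabla_{X}Y-\nabla_{Y}X$ and also in $[X,Y]$, so they cancel completely; among the remaining contributions, $\overline{[\overline{X},\overline{Y}]}$ and $C[CX,CY]$ cancel against the corresponding pieces of $[X,Y]$, and what survives is $-\overline{[CX,CY]}-C[\overline{X},\overline{Y}]$, giving exactly the asserted formula. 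I expect the genuine subtlety to be that $C$ (and hence $V$) is not assumed involutive, so $[CX,CY]$ need not lie in $C\mathfrak{X}$ and $[\overline{X},\overline{Y}]$ need not lie in $\overline{\mathfrak{X}}$; it is precisely these non-integrability components $\overline{[CX,CY]}$ and $C[\overline{X},\overline{Y}]$ that fail to cancel and produce the last two terms. Since every object here sits in degree $0$, signs play no role.
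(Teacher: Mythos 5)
Your proposal is correct and follows essentially the same route as the paper: start from the torsion definition, substitute formula (\ref{8}) for both $\nabla_{X}Y$ and $\nabla_{Y}X$, convert the antisymmetrized $\tilde{\nabla}$-terms into $\tilde{T}$ plus brackets and the Lie derivatives into brackets, then expand $[X,Y]$ into the four component brackets and cancel. Your closing remark correctly identifies why the terms $-\overline{[CX,CY]}$ and $-C[\overline{X},\overline{Y}]$ survive — neither $C$ nor $V$ is assumed involutive in this section — which is exactly the content of the cancellation in the paper's computation.
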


\begin{proof}
Compute
\begin{align*}
T(X,Y)  &  =\nabla_{X}Y-\nabla_{Y}X-[X,Y]\\
&  =\overline{(\tilde{\nabla}_{\overline{X}}+L_{CX})\overline{Y}}%
+C(\tilde{\nabla}_{CX}+L_{\overline{X}})CY\\
&  \quad\ -\overline{(\tilde{\nabla}_{\overline{Y}}+L_{CY})\overline{X}%
}-C(\tilde{\nabla}_{CY}+L_{\overline{Y}})CX-[X,Y]\\
&  =\overline{\tilde{T}(\overline{X},\overline{Y})}+C\tilde{T}%
(CX,CY)+\overline{[\overline{X},\overline{Y}]}+C[CX,CY]\\
&  \quad\ +\overline{[CX,\overline{Y}]}+C[\overline{X},CY]+\overline
{[\overline{X},CY]}+C[CX,\overline{Y}]-[X,Y]\\
&  =\overline{\tilde{T}(\overline{X},\overline{Y})}+C\tilde{T}%
(CX,CY)-\overline{[CX,CY]}-C[\overline{X},\overline{Y}].
\end{align*}

\end{proof}

\begin{corollary}
A torsion-free adapted connection exists iff both $C$ and $V$ are involutive.
\end{corollary}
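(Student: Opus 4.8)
The plan is to read the statement off directly from the torsion formula of the preceding proposition, exploiting that $\mathfrak{X}(M)=C\mathfrak{X}\oplus\overline{\mathfrak{X}}$ splits the target of the torsion into two complementary pieces. I would use the fact, established just above, that every adapted connection $\nabla$ arises from Formula (\ref{9}) with the choice $\tilde{\nabla}=\nabla$; in that case $\tilde{T}=T$.

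For the forward implication, suppose a torsion-free adapted connection $\nabla$ exists. Taking $\tilde{\nabla}=\nabla$ in the torsion formula, so that $\tilde{T}=T=0$, I obtain
\[
0=T(X,Y)=-\overline{[CX,CY]}-C[\overline{X},\overline{Y}],\quad X,Y\in\mathfrak{X}(M).
\]
Since $\overline{[CX,CY]}\in\overline{\mathfrak{X}}$ and $C[\overline{X},\overline{Y}]\in C\mathfrak{X}$ lie in complementary summands, each must vanish separately. As $CX$ (resp.\ $\overline{X}$) ranges over all of $C\mathfrak{X}$ (resp.\ $\overline{\mathfrak{X}}$), the vanishing of $\overline{[CX,CY]}$ says $[Z,W]\in C\mathfrak{X}$ for all $Z,W\in C\mathfrak{X}$, i.e.\ $C$ is involutive, while the vanishing of $C[\overline{X},\overline{Y}]$ says $[\overline{X},\overline{Y}]\in\overline{\mathfrak{X}}=V$, i.e.\ $V$ is involutive.

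For the converse, assume both $C$ and $V$ are involutive, so that $\overline{[CX,CY]}=0$ and $C[\overline{X},\overline{Y}]=0$ identically. Then the torsion formula collapses to $T(X,Y)=\overline{\tilde{T}(\overline{X},\overline{Y})}+C\tilde{T}(CX,CY)$, which depends only on the torsion of the fiducial connection. I would then pick $\tilde{\nabla}$ to be any symmetric (torsion-free) connection --- one always exists, e.g.\ the Levi-Civita connection of an auxiliary Riemannian metric --- so that $\tilde{T}=0$ and hence $T=0$; the adapted connection associated to it via Formula (\ref{9}) is then torsion-free.

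The only point requiring a little care --- and the main (minor) obstacle --- is the bookkeeping of the two complementary components: one must check that in the forward direction $\overline{[CX,CY]}$ and $C[\overline{X},\overline{Y}]$ genuinely sit in the complementary subspaces $\overline{\mathfrak{X}}$ and $C\mathfrak{X}$, so that they vanish independently, and that letting $X,Y$ range freely recovers all of $C\mathfrak{X}$ and of $\overline{\mathfrak{X}}$. Everything else is a direct substitution into the torsion formula already proved.
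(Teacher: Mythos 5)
Your proof is correct, and its ``if'' direction is identical to the paper's (apply the torsion formula to the adapted connection determined by any torsion-free fiducial connection), but your ``only if'' direction takes a slightly different route. The paper keeps the fiducial connection $\tilde{\nabla}$ arbitrary and specializes the \emph{arguments}: for $X,Y\in\overline{\mathfrak{X}}$ the torsion formula collapses to $0=T(X,Y)=\overline{\tilde{T}(X,Y)}-C[X,Y]$, and since the two terms lie in the complementary summands $V$ and $C\mathfrak{X}$ each vanishes separately, giving involutivity of $V$; the pair $X,Y\in C\mathfrak{X}$ gives involutivity of $C$ in the same way. You instead specialize the \emph{connection}: invoking the observation, made just before the corollary, that every adapted connection is of the form (\ref{9}) with $\tilde{\nabla}=\nabla$, you get $\tilde{T}=T=0$, so the torsion formula reads $0=-\overline{[CX,CY]}-C[\overline{X},\overline{Y}]$ for all $X,Y$, and the same complementarity argument kills both bracket terms at once. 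The two arguments rest on exactly the same ingredients (the torsion formula of the preceding proposition plus the splitting $\mathfrak{X}(M)=C\mathfrak{X}\oplus V$); yours is marginally cleaner in that the fiducial torsion disappears once and for all, while the paper's works with any presentation of $\nabla$ by a fiducial connection and so never needs to invoke the self-presentation $\tilde{\nabla}=\nabla$ explicitly (though that remark is what licenses writing an arbitrary adapted connection as ``determined by a connection $\tilde{\nabla}$'' in the first place). The point you flag as delicate is indeed fine: $\overline{[CX,CY]}$ is the $V$-component of $[CX,CY]$ under the identification $\overline{\mathfrak{X}}\simeq V$, and $C[\overline{X},\overline{Y}]$ is the $C$-component of $[\overline{X},\overline{Y}]$, so they lie in complementary submodules of $\mathfrak{X}(M)$, and the maps $X\longmapsto CX$, $X\longmapsto\overline{X}$ are surjective onto $C\mathfrak{X}$ and $\overline{\mathfrak{X}}$ by the very definition of the splitting.
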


\begin{proof}
If both $C$ and $V$ are involutive, an adapted connection determined by a
torsion-free connection is torsion-free as well. Conversely, let the adapted
connection $\nabla$ determined by a connection $\tilde{\nabla}$ be
torsion-free. Then, for $X,Y\in\overline{\mathfrak{X}}$,
\[
0=T(X,Y)=\overline{\tilde{T}(X,Y)}-C[X,Y]\Longrightarrow C[X,Y]=0.
\]
Similarly, for $X,Y\in C\mathfrak{X}$.
\end{proof}

\begin{definition}
An adapted connection $\nabla$ is called\emph{ torsion-quasi-free} if
\[
T(X,Y)=-\overline{[CX,CY]}-C[\overline{X},\overline{Y}].
\]

\end{definition}

\begin{corollary}
There exist torsion-quasi-free adapted connections.
\end{corollary}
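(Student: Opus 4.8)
The plan is to read the answer directly off the torsion formula of the preceding proposition. That formula expresses the torsion $T$ of the adapted connection $\nabla$ determined by a fiducial connection $\tilde\nabla$ as a sum of two contributions: a part depending on the torsion $\tilde T$ of $\tilde\nabla$, namely $\overline{\tilde{T}(\overline{X},\overline{Y})}+C(\tilde{T}(CX,CY))$, and an intrinsic part $-\overline{[CX,CY]}-C[\overline{X},\overline{Y}]$ which does not involve $\tilde\nabla$ at all. By definition, $\nabla$ is torsion-quasi-free precisely when the first, $\tilde T$-dependent contribution vanishes. So the strategy is to arrange for the fiducial torsion to disappear.

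First I would recall that torsion-free connections on $M$ always exist: one may take, for instance, the Levi-Civita connection of an arbitrary Riemannian metric, or equivalently the symmetrization of any affine connection. Fix such a $\tilde\nabla$, so that $\tilde T=0$.

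Then I would feed this $\tilde\nabla$ into Formula (\ref{9}) to produce an adapted connection $\nabla$ (this is legitimate by the existence proposition), and substitute $\tilde T=0$ into the torsion formula. The two $\tilde T$-dependent terms vanish identically, leaving $T(X,Y)=-\overline{[CX,CY]}-C[\overline{X},\overline{Y}]$, which is exactly the torsion-quasi-free condition. This completes the argument.

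There is essentially no obstacle here: all the work has been front-loaded into the torsion formula of the previous proposition, and the existence of torsion-free fiducial connections is standard. The only point worth flagging is that one cannot do better in general, since the intrinsic terms $-\overline{[CX,CY]}-C[\overline{X},\overline{Y}]$ are precisely the obstruction measured by the previous corollary, vanishing only when both $C$ and $V$ are involutive; hence \emph{torsion-quasi-free} is the sharp statement available without integrability hypotheses.
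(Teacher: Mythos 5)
Your proof is correct and is essentially identical to the paper's: the paper's entire proof reads ``The adapted connection determined by a torsion-free connection is torsion-quasi-free,'' which is exactly your argument of choosing a torsion-free fiducial connection (e.g.\ Levi-Civita) and substituting $\tilde{T}=0$ into the torsion formula of the preceding proposition. Your closing remark on sharpness is also consistent with the paper's earlier corollary characterizing when torsion-free adapted connections exist.
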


\begin{proof}
The adapted connection determined by a torsion-free connection is torsion-quasi-free.
\end{proof}

Now, suppose that $C$ is involutive and let $\nabla$ be a torsion-quasi-free
adapted connection. Let $T$ be the torsion of $\nabla$. Then, clearly,

\begin{enumerate}
\item $\nabla$ extends the Bott connection,

\item $T$ coincides with the curvature form of $V$, up to a sign,

\item In view of (\ref{18})
\begin{equation}
T(V_{\alpha},V_{\beta})=-R_{\alpha\beta}^{i}\partial_{i}. \label{20}%
\end{equation}

\end{enumerate}

\subsection{Two PBW Isomorphisms\label{PBW}}

Now, let $C$ be again an involutive distribution on $M$, and $\nabla$ a
connection in $\Lambda^{1}(M)$ adapted to the pair $(C,V)$ and
torsion-quasi-free. The connection $\nabla$ determines two PBW type
isomorphisms (see \cite{lsx12} for a similar result)
\[
\underline{\mathsf{PBW}}:\overline{\Lambda}\otimes_M\overline{\mathcal{D}%
}\approx\overline{\Lambda}\otimes_M S^{\bullet}\overline{\mathfrak{X}}%
,\quad\mathsf{PBW}:\mathcal{D}(\overline{\Lambda})\approx\mathcal{S}%
(\overline{\Lambda})
\]
as follows. For $\omega\in\overline{\Lambda}$ and $P\in S^{\bullet}%
\overline{\mathfrak{X}}$, put
\[
\underline{\mathsf{PBW}}(\omega\otimes P):=\omega\otimes\overline{\nabla_{P}%
},
\]
where $\nabla_{P}$ is defined as in Example \ref{Examp}. To define
$\mathsf{PBW}$ notice, first of all, that every derivation $\Delta
\in\mathrm{Der}\overline{\Lambda}$ can be uniquely written in the form
\begin{equation}
\Delta=i_{W}+\nabla_{V}+\overline{L}_{Z}, \label{21}%
\end{equation}
$W,V\in\overline{\Lambda}\otimes_M C\mathfrak{X}$, $Z\in\overline{\Lambda
}\otimes_M\overline{\mathfrak{X}}$, where $\nabla_{V}$ is defined as in Example
\ref{Examp}. Indeed, let
\[
\Delta=i_{U}+\overline{L}_{V}+\overline{L}_{Z},
\]
$U,V\in\overline{\Lambda}\otimes_M C\mathfrak{X}$, $Z\in\overline{\Lambda
}\otimes_M\overline{\mathfrak{X}}$. Now, since $\nabla$ is adapted, and
torsion-quasi-free, then $\nabla_V$ preserves $\overline{\Lambda}$, and $\overline{L}_{V}=\nabla_{V}+i_{\nabla V}$, with
$\nabla V\in\overline{\Lambda}\otimes_M C\mathfrak{X}$. Thus (\ref{21}) holds
simply putting $W:=U+\nabla V$. Clearly, the correspondence
\[
\left(\overline{\Lambda}\otimes_M C\mathfrak{X}[1]\right) \oplus \left(\overline{\Lambda}\otimes_M
C\mathfrak{X}\right) \oplus \left(\overline{\Lambda}\otimes_M\overline{\mathfrak{X}}\right)%
\ni(W,V,Z)\longmapsto i_{W}+\nabla_{V}+\overline{L}_{Z}\in\mathrm{Der}%
\overline{\Lambda}%
\]
is an isomorphism of $\overline{\Lambda}$-module, so that
\[
\mathcal{S}(\overline{\Lambda})\simeq\overline{\Lambda}\otimes_M\Lambda
^{\bullet}C\mathfrak{X}\otimes_M S^{\bullet}C\mathfrak{X}\otimes_M S^{\bullet
}\overline{\mathfrak{X}}%
\]
and $p_{0}:\mathcal{S}(\overline{\Lambda})\longrightarrow\overline{\Lambda
}\otimes_M S^{\bullet}\overline{\mathfrak{X}}$ is the obvious projection.
Moreover, let
\[
\Sigma=\omega\otimes X_{1}\wedge\cdots\wedge X_{k}\otimes P\otimes
Q\in\mathcal{S}(\overline{\Lambda}),
\]
put
\[
\mathsf{PBW}(\Sigma):=\omega i_{X_{1}}\cdots i_{X_{k}}\nabla_{P\odot Q}%
\in\mathcal{D}(\overline{\Lambda}).
\]

\begin{remark}
\label{Rempj0}In general, the isomorphisms $\underline{\mathsf{PBW}}$ and
$\mathsf{PBW}$ are not compatible with projections $p,p_{0}$. However, if one
uses them to induce an injection $j_{0}^{\sim}:\overline{\Lambda}%
\otimes_M\overline{\mathcal{D}}\longrightarrow\mathcal{D}(\overline{\Lambda})$,
from the injection $j_{0}:\overline{\Lambda}\otimes_M S^{\bullet}\overline
{\mathfrak{X}}\longrightarrow\mathcal{S}(\overline{\Lambda})$, then the former
is a right inverse of $p$. Indeed, clearly
\[
\underline{\mathsf{PBW}}=p\circ\mathsf{PBW}\circ j_{0},
\]
so that, for $\Sigma\in\overline{\Lambda}\otimes_M S^{i}\overline{\mathfrak{X}%
}$
\begin{align*}
(p\circ j_{0}^{\sim}\circ\underline{\mathsf{PBW}})\Sigma &  =(p\circ
j_{0}^{\sim}\circ p\circ\mathsf{PBW}\circ j_{0})\Sigma\\
&  =(p\circ\mathsf{PBW}\circ j_{0}\circ\sigma_{i}\circ p\circ\mathsf{PBW}\circ
j_{0})\Sigma\\
&  =(p\circ\mathsf{PBW}\circ j_{0}\circ p_{0}\circ\sigma_{i}\circ
\mathsf{PBW}\circ j_{0})\Sigma\\
&  =(p\circ\mathsf{PBW}\circ j_{0}\circ p_{0}\circ j_{0})\Sigma\\
&  =(p\circ\mathsf{PBW}\circ j_{0})\Sigma\\
&  =\underline{\mathsf{PBW}}(\Sigma).
\end{align*}

\end{remark}

In the following, I will often understand isomorphisms $\underline
{\mathsf{PBW}}$ and $\mathsf{PBW}$.

\section{The $A_{\infty}$-Algebra of a Foliation\label{SecA}}

Let $M$ be a smooth manifold and let $C$ be an involutive distribution on it.
Summarizing results obtained so far, a complementary distribution $V$ and a
torsion-quasi-free adapted connection $\nabla$ determine

\begin{enumerate}
\item Contraction data $(p_{0},j_{0},h_{0})$ for $(\mathcal{S}(\overline
{\Lambda}),\delta_{\mathcal{S}})$ over $(\overline{\Lambda}\otimes_M S^{\bullet
}\overline{\mathfrak{X}},\overline{d}_{\mathcal{S}})$,

\item PBW type isomorphisms $\overline{\Lambda}\otimes_M\overline{\mathcal{D}%
}\approx\overline{\Lambda}\otimes_M S^{\bullet}\overline{\mathfrak{X}}$,
$\mathcal{D}(\overline{\Lambda})\approx\mathcal{S}(\overline{\Lambda})$,
\end{enumerate}

Notice that, actually, i) $p_{0}$ is independent of the supplementary
geometric data $V$ and $\nabla$, and 2) $j_{0},h_{0}$ do only depend on $V$.

Now, put $t=\delta_{\mathcal{S}}-\delta_{\mathcal{D}}:\mathcal{D}%
(\overline{\Lambda})\longrightarrow\mathcal{D}(\overline{\Lambda})$. The
Perturbation Lemma determines a \textquotedblleft new\textquotedblright%
\ differential $\overline{d}_{t}:\overline{\Lambda}\otimes_M\overline
{\mathcal{D}}\longrightarrow\overline{\Lambda}\otimes_M\overline{\mathcal{D}}$
and contraction data $(p_{t},j_{t},h_{t})$ for $(\mathcal{D}(\overline
{\Lambda}),\delta_{\mathcal{D}})$ over $(\overline{\Lambda}\otimes_M
\overline{\mathcal{D}},\overline{d}_{t})$, given by Formulas (\ref{p}%
), (\ref{j}), (\ref{hi}). In its turn, the Homotopy Transfer Theorem determine
an $A_{\infty}$-algebra structure on $(\overline{\Lambda}\otimes_M
\overline{\mathcal{D}},\overline{d}_{t})$. Before giving more details about
these structures, I remark that $p_{t}$ is actually independent of $V$ and
$\nabla$ and coincides with the canonical projection $p:\mathcal{D}%
(\overline{\Lambda})\longrightarrow\overline{\Lambda}\otimes_M\overline
{\mathcal{D}}$. To show this, first notice that $pj_{0}=\mathrm{id}$ and
$ph_{0}=0$ (the first identity is discussed in Remark \ref{Rempj0}, while the
second one is immediate from the definitions of $h_{0}$ and $p$). It follows
that $pj_{t}=\mathrm{id}$ and $ph_{t}=0$. Now, let $\square\in\mathcal{D}%
(\overline{\Lambda})$. Then
\[
0=p[h_{t},\delta]\square=p(\mathrm{id}-j_{t}p_{t})\square=(p-p_{t})\square.
\]
As an immediate consequence, $\overline{d}_{t}$ is also independent of $V$ and
$\nabla$, and coincides with the canonical differential $\overline
{d}_{\mathcal{D}}:\overline{\Lambda}\otimes_M\overline{\mathcal{D}%
}\longrightarrow\overline{\Lambda}\otimes_M\overline{\mathcal{D}}$. In the
following, I put $j:=j_{t}$ and $h:=h_{t}$.

I am finally in the position to furnish few details about the (strict unital)
$A_{\infty}$-algebra structure $\{\alpha_{k},\ k\in\mathbb{N}\}$ on
$\overline{\Lambda}\otimes_M\overline{\mathcal{D}}$. To this aim, notice that
the isomorphism $\mathsf{PBW}:\mathcal{D}(\overline{\Lambda})\approx
\mathcal{S}_{\bullet}(\overline{\Lambda})$ (resp., $\underline{\mathsf{PBW}%
}:\overline{\Lambda}\otimes_M\overline{\mathcal{D}}\approx\overline{\Lambda
}\otimes_M S^{\bullet}\overline{\mathfrak{X}}$) determines a new grading in
$\mathcal{D}(\overline{\Lambda})$ (resp., $\overline{\Lambda}\otimes_M
\overline{\mathcal{D}}$), which I call the \emph{order }and is given by the
decomposition $\mathcal{S}(\overline{\Lambda})=\bigoplus_{k}\mathcal{S}%
_{k}(\overline{\Lambda})$ (resp., $\overline{\Lambda}\otimes_M S^{\bullet
}\overline{\mathfrak{X}}=\bigoplus_{k}\overline{\Lambda}\otimes_M S^{k}%
\overline{\mathfrak{X}}$). Every map $\phi$ of the spaces $\mathcal{D}%
(\overline{\Lambda})$ and $\overline{\Lambda}\otimes_M\overline{\mathcal{D}}$
have its homogenous components with respect to the order. I denote by
$\phi^{\lbrack i]}$ the $i$-th one, and by $\mathcal{O}(i)$ a generic (no
better specified) object of order no higher than $i$, e.g.,
\[
\delta_{\mathcal{D}}=\delta_{\mathcal{S}}+\mathcal{O}(-1){},\quad\overline
{d}_{\mathcal{D}}=\overline{d}_{\mathcal{S}}{}+\mathcal{O}(-1),\quad
p=p_{0}+\mathcal{O}(-1),\quad h={}h_{0}+\mathcal{O}(-1).
\]
Similarly,
\[
t=t^{[-1]}+\mathcal{O}(-2),
\]
and
\begin{equation}
j=j_{0}+h_{0}t^{[-1]}j_{0}+\mathcal{O}(-2).\label{j-1}%
\end{equation}
I will not need to compute $t^{[-1]}$. Finally, the composition $\circ$ of
differential operators in $\mathcal{D}(\overline{\Lambda})$, decomposes as
\[
{}\circ{}={}\odot{}+{}\circledast{}+\mathcal{O}(-2){},
\]
where I put $\circledast{}:={}\circ^{\lbrack-1]}$.

Notice that, in view of the above decompositions of $\delta_{\mathcal{D}%
},\overline{d}_{\mathcal{D}}$ and the contraction data $(p,j,h)$, the $k$-th
Poisson bracket in $\overline{\Lambda}\otimes_M S^{\bullet}\overline
{\mathfrak{X}}$ is the skew-symmetrization $\mathsf{A}\alpha_{k}^{[1-k]}$ of
$\alpha_{k}^{[1-k]}$. In particular, the skew-symmetrization of $\alpha
_{k}^{[1-k]}$ vanishes for $k>3$ \cite{vi12}. My next aim is twofold:

\begin{enumerate}
\item proving that $\alpha_{k}$ has no component of order higher than
$\alpha_{k}^{[1-k]}$, i.e., $\alpha_{k}=\alpha_{k}^{[1-k]}+\mathcal{O}(-k)$,
for $k\neq2$,

\item \textquotedblleft computing\textquotedblright\ $\alpha_{k}^{[1-k]}$ and,
in particular, showing that it is zero for $k>3$.
\end{enumerate}

Notice that the first claim states that the order of $\alpha_{k}(\square
_{1},\ldots,\square_{k})$ is no higher than $1-k+\sum_{i}\ell_{i}$ for
$\square_{i}=$ $\mathcal{O}(\ell_{i})$, $i=1,\ldots,k$. The claim that
$\alpha_{k}^{[1-k]}=0$ for $k>3$, can be interpreted as a further motivation
why the $LR_{\infty}$-algebra structure on $\overline{\Lambda}\otimes_M
\overline{\mathfrak{X}}$ presents just one higher homotopy \cite{vi12}. In
order to reach my aim, I first prove a

\begin{lemma}
The order $-1$ component of the projection $p$ vanishes, i.e., $p^{[-1]}=0$
(so that $p=p_{0}+\mathcal{O}(-2)$).
\end{lemma}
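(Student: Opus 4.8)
The plan is to evaluate $p$ on $\mathsf{PBW}$-homogeneous elements and read off the order of the output. Since $p\square=\overline{\square|_{C^\infty(M)}}$, I first restrict attention to $\square=\mathsf{PBW}(\Sigma)$ for a homogeneous symbol $\Sigma=\omega\otimes X_{1}\wedge\cdots\wedge X_{j}\otimes P\otimes Q$, with $P\in S^{\ell}C\mathfrak{X}$ and $Q\in S^{m}\overline{\mathfrak{X}}$. As a contraction $i_{X}$ annihilates $0$-forms, $p(\mathsf{PBW}(\Sigma))=0$ whenever $j\geq1$, while for $j=0$ one gets $p(\mathsf{PBW}(\Sigma))=\omega\otimes\overline{\nabla_{P\odot Q}}$. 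When $\ell=0$ this equals $\underline{\mathsf{PBW}}(\Sigma)$ by the very definition of $\underline{\mathsf{PBW}}$, hence is order preserving and contributes nothing to $p^{[-1]}$. Thus everything reduces to the case $\ell\geq1$, where I must show that $\overline{\nabla_{P\odot Q}}$ has order $\leq k-2$ in $\overline{\mathcal{D}}$ (with $k=\ell+m$), so that its order-$(k-1)$ component vanishes.

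Writing $P=Y\odot P'$ with $Y\in C\mathfrak{X}$ and setting $S:=P'\odot Q\in S^{k-1}\mathfrak{X}(M)$, this is an instance of the following self-contained claim, which I would isolate and prove: for $Y\in C\mathfrak{X}$ and $S\in S^{s}\mathfrak{X}(M)$, the reduction $\overline{\nabla_{Y\odot S}}$ has order $\leq s-1$. The device is to compare $\nabla_{Y\odot S}$ with the composition $\nabla_{S}\circ\nabla_{Y}$. Since $\nabla_{Y}|_{C^\infty(M)}=Y\in C\mathfrak{X}$, the operator $\nabla_{S}\circ\nabla_{Y}$ lies in $\mathcal{D}(M)\circ C\mathfrak{X}$ and so $\overline{\nabla_{S}\circ\nabla_{Y}}=0$; hence $\overline{\nabla_{Y\odot S}}=\overline{D}$ with $D:=\nabla_{Y\odot S}-\nabla_{S}\circ\nabla_{Y}$. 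The principal symbols of the two terms both equal $Y\odot S$, so $D$ has order $\leq s$, and the entire point is that the order-$s$ symbol of $D$ projects to $0$ in $S^{s}\overline{\mathfrak{X}}$.

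To see this I would expand $\nabla_{S}\circ\nabla_{Y}$ on functions as $\nabla_{S}(Yf)=\langle S,\nabla^{s}(Yf)\rangle$ and apply the Leibniz rule to $\nabla^{s}(Yf)$. The term in which no derivative hits $Y$ rebuilds $\nabla_{Y\odot S}$ up to reordering of covariant derivatives, and the two defects that survive at order $s$ are exactly (i) contractions against iterated covariant derivatives $\nabla^{p}Y$, $p\geq1$, of $Y$, and (ii) torsion contractions $T(Y,\cdot)$ produced when the single $C\mathfrak{X}$-derivative is commuted into symmetric position (the curvature produced by deeper reorderings acts trivially on the innermost scalar and only contributes at order $\leq s-1$). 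Both defects die after projecting to $\overline{\mathcal{D}}$: by property 2 of an adapted connection $\nabla$ preserves $C\mathfrak{X}$, so each $\nabla^{p}Y$ is $C\mathfrak{X}$-valued and (i) carries a $C\mathfrak{X}$-factor mapping to $0$; and for the adapted, torsion-quasi-free $\nabla$ one has $T(Y,\cdot)=0$ whenever $Y\in C\mathfrak{X}$, since $T(Y,X')=-\overline{[Y,CX']}$ and $[Y,CX']\in C\mathfrak{X}$ by involutivity of $C$. Consequently every surviving order-$s$ contribution factors through either $\nabla Y$ or $T(Y,\cdot)$, $\overline{D}$ has order $\leq s-1$, and the claim—hence $p^{[-1]}=0$—follows.

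The step I expect to be the real obstacle is making precise the order-$s$ symbol of $D$ for general $s$: organizing the Leibniz expansion of $\nabla^{s}(Yf)$ together with the torsion and curvature terms generated by symmetrizing the lone $C\mathfrak{X}$-derivative into place, and verifying that no order-$s$ term escapes the two forms above. The base cases are instructive: for $s=0$ one has $\overline{\nabla_{Y}}=\overline{Y}=0$, and for $s=1$ with $S=W$ a direct computation gives $D=-\tfrac{1}{2}T(Y,W)-\nabla_{W}Y$, which reduces to $0$ precisely because $T(Y,W)=0$ and $\nabla_{W}Y\in C\mathfrak{X}$. It is exactly here that both defining properties of the connection—preservation of $C\mathfrak{X}$ and torsion-quasi-freeness—are simultaneously needed, so the bookkeeping must be arranged to keep these two mechanisms separate throughout the induction.
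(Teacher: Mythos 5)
Your proposal is correct, and although it rests on the same two geometric facts as the paper's proof, it is organized along a genuinely different route. The paper computes in adapted coordinates: it expands $\square$ in the frame basis $I_{i_{1}\cdots i_{k}}\nabla_{j_{1}}\cdots\nabla_{j_{\ell}}\nabla_{\alpha_{1}}\cdots\nabla_{\alpha_{m}}$ with all coefficients pulled out front, restricts to functions (killing the contraction terms, exactly as you do), and then, modulo the left ideal $\mathcal{D}(M)\circ C\mathfrak{X}$, trades the innermost $C$-derivative $\nabla_{j_{\ell}}$ for a sum of insertions of the commutators $[\nabla_{j_{\ell}},\nabla_{\alpha_{r}}]$; since torsion-quasi-freeness gives $T(\partial_{j},V_{\alpha})=0$, the Ricci identity makes each such commutator tensorial (order $0$), so every surviving term drops order by two in one stroke. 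Because the coefficients never sit inside the scope of the derivatives in this frame expansion, no Leibniz terms ever arise there --- the paper's computation thus sidesteps precisely the bookkeeping you flagged as the main obstacle. Your argument instead isolates the coordinate-free claim that $\overline{\nabla_{Y\odot S}}$ has order at most $s-1$ for $Y\in C\mathfrak{X}$, $S\in S^{s}\mathfrak{X}(M)$, proved by subtracting $\nabla_{S}\circ\nabla_{Y}\in\mathcal{D}(M)\circ C\mathfrak{X}$; the price is the extra family of Leibniz terms involving $\nabla^{p}Y$, which you kill with adaptedness ($\nabla$ preserves $C\mathfrak{X}$), while the reordering torsion terms die because $T(Y,{}\cdot{})=0$ for $Y\in C\mathfrak{X}$ --- note that this uses involutivity of $C$ on top of torsion-quasi-freeness, whereas the paper only ever needs the mixed vanishing $T(\partial_{j},V_{\alpha})=0$ --- and curvature insertions drop the order by two automatically. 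Your claimed structure of the order-$s$ defects is in fact exactly right: distributing $\nabla_{a_{1}}\cdots\nabla_{a_{s}}$ over $Y^{b}\nabla_{b}f$ by Leibniz, the only order-$s$ contributions are the single-derivative terms, whose symbols carry a $C\mathfrak{X}$-factor $\nabla Y$ and hence die in $S^{s}\overline{\mathfrak{X}}$, and commuting the lone $Y$-derivative into innermost position produces only torsion terms (order drop one, but identically zero) and curvature terms (order drop two); so the step you worried about is routine and your proof closes, matching your $s=0,1$ base cases. What each approach buys: the paper's is shorter and needs essentially one Ricci identity; yours is invariant, yields a reusable lemma about $\overline{\nabla_{Y\odot S}}$, and makes transparent which defining property of the adapted torsion-quasi-free connection kills which defect.
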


\begin{proof}
Let $\square\in\mathcal{D}(\overline{\Lambda})$ be of order $H$. Then,
$\square$ is locally of the form
\[
\square=\sum_{k+\ell+m=H}A^{i_{1}\cdots i_{k}|j_{1}\cdots j_{\ell}|\alpha
_{1}\cdots\alpha_{m}}I_{i_{1}\cdots i_{k}}\nabla_{j_{1}}\cdots\nabla_{j_{\ell
}}\nabla_{\alpha_{1}}\cdots\nabla_{\alpha_{m}}%
\]
where $I_{i_{1}\cdots i_{k}}:=i_{\partial_{i_{1}}}\cdots i_{\partial_{i_{k}}}%
$, and the $A$'s are components of a (contravariant) tensor. The $A$'s are
skew-symmetric in the $i$'s, symmetric in the $j$'s and symmetric in the
$\alpha$'s. Compute
\[
p\square=\sum_{\ell+m=H}A^{\varnothing|j_{1}\cdots j_{\ell}|\alpha_{1}%
\cdots\alpha_{m}}\overline{\nabla_{j_{1}}\cdots\nabla_{j_{\ell}}\nabla
_{\alpha_{1}}\cdots\nabla_{\alpha_{m}}}%
\]
Clearly,
\[
p^{[0]}\square=(p\square)^{[H]}=p_{0}\square=A^{\varnothing|\varnothing
|\alpha_{1}\cdots\alpha_{H}}V_{\alpha_{1}\cdots\alpha_{H}}.
\]
Now, let $\ell>0$,
\begin{align*}
&  A^{\varnothing|j_{1}\cdots j_{\ell}|\alpha_{1}\cdots\alpha_{m}}%
\overline{\nabla_{j_{1}}\cdots\nabla_{j_{\ell}}\nabla_{\alpha_{1}}\cdots
\nabla_{\alpha_{m}}}\\
&  =A^{\varnothing|j_{1}\cdots j_{\ell}|\alpha_{1}\cdots\alpha_{m}}%
\overline{\nabla_{j_{1}}\cdots\nabla_{j_{\ell-1}}[\nabla_{j_{\ell}}%
,\nabla_{\alpha_{1}}\cdots\nabla_{\alpha_{m}}]}\\
&  =A^{\varnothing|j_{1}\cdots j_{\ell}|\alpha_{1}\cdots\alpha_{m}}\sum_{r\leq
m}\overline{\nabla_{j_{1}}\cdots\nabla_{j_{\ell-1}}\nabla_{\alpha_{1}}%
\cdots\underset{r}{[\nabla_{j_{\ell}},\nabla_{\alpha_{r}}]}\cdots
\nabla_{\alpha_{m}}}.
\end{align*}
Since $\nabla$ is adapted and torsion-quasi-free
\begin{equation}
\lbrack\nabla_{i},\nabla_{\alpha}]\lambda_{\beta_{1}\cdots\beta_{t}}%
=\sum_{s\leq t}R_{i\alpha\beta_{s}}^{\nabla}{}^{\beta}\lambda_{\beta_{1}%
\cdots\underset{s}{\beta}\cdots\beta_{t}},\label{22}%
\end{equation}
for all covariant tensors $\lambda$ locally of the form
\[
\lambda=\lambda_{\beta_{1}\cdots\beta_{t}}du^{\beta_{1}}\otimes\cdots\otimes
du^{\beta_{t}}.
\]
In (\ref{22}) $R^{\nabla}$ is the curvature tensor of $\nabla$. It follows
from (\ref{22}), that $[\nabla_{i},\nabla_{\alpha}]=\mathcal{O}(0)$, and
\[
(\overline{\nabla_{(j_{1}}\cdots\nabla_{j_{\ell})}\nabla_{(\alpha_{1}}%
\cdots\nabla_{\alpha_{m})}})\in\mathcal{O}(\ell+m-2)
\]
for all $\ell,m$. I conclude that
\[
p\square=p_{0}\square+\mathcal{O}(H-2).
\]

\end{proof}

The following proposition is a corollary of the above lemma, and the side
conditions $ph=0$, $hj=0$, $h^{2}=0$.

\begin{proposition}%
\begin{align*}
\gamma_{k} &  =\mathcal{O}(1-k),\quad k\geq1\\
\beta_{k} &  =\mathcal{O}(2-k),\quad k\geq2\\
\alpha_{k} &  =\mathcal{O}(1-k),\quad k\geq3
\end{align*}
while
\[
\alpha_{2}=\mathcal{O}(0).
\]
Moreover, the highest order componet\ $\alpha_{k}^{[1-k]}$ of $\alpha_{k}$ can
be computed iteratively via formulas
\begin{align*}
\varepsilon_{k}(\square_{1},\ldots,\square_{k}) &  :=-\sum_{\ell
+m=k}(-)^{a(\ell,m,\boldsymbol{\square})}\gamma_{\ell}^{[1-\ell]}(\square
_{1},\ldots,\square_{\ell})\circledast\gamma_{m}^{[1-m]}(\square_{\ell
+1},\ldots,\square_{\ell+m})\\
\gamma_{k}^{[1-k]} &  =h_{0}\varepsilon_{k},\\
\alpha_{k}^{[1-k]} &  =p_{0}\varepsilon_{k},
\end{align*}
$\boldsymbol{\square}=(\square_{1},\ldots,\square_{k})$, $\square_1,\ldots,\square_k \in \overline{\Lambda
}\otimes_M\overline{\mathcal{D}}$, being a $k$-tuple of homogeneous
elements of given orders, $k\geq2$.
\end{proposition}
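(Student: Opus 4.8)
The plan is to run a single strong induction on $k$ through the order filtration, resting the whole argument on one annihilation property of $h_{0}$ and $p_{0}$.

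First I would record the orders of the elementary pieces. From the Perturbation Lemma formulas (\ref{j}), (\ref{hi}) one has $j=(\mathrm{id}+h_{0}X)j_{0}$ and $h=h_{0}(\mathrm{id}+Xh_{0})$, and since $X=t+th_{0}t+\cdots=\mathcal{O}(-1)$ this gives $j=j_{0}+\mathcal{O}(-1)$, $h=h_{0}+\mathcal{O}(-1)$ and, crucially, $\operatorname{im}(h)\subset\operatorname{im}(h_{0})$ together with $\operatorname{im}(j-j_{0})\subset\operatorname{im}(h_{0})$. Together with the preceding Lemma ($p=p_{0}+\mathcal{O}(-2)$), the decomposition ${}\circ{}={}\odot{}+{}\circledast{}+\mathcal{O}(-2)$, and the facts that $p_{0},j_{0},h_{0},\odot$ are $\mathcal{O}(0)$ while $\circledast=\mathcal{O}(-1)$, this is all the order data required.

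Second, I would isolate the key fact: \emph{both $h_{0}$ and $p_{0}$ annihilate every $\odot$-product $\xi_{1}\odot\cdots\odot\xi_{r}$ whose factors each lie in $\operatorname{im}(j_{0})\cup\operatorname{im}(h_{0})$.} For $p_{0}$ this is immediate, since $p_{0}$ is an algebra morphism and $\operatorname{im}(h_{0})\subset\ker p_{0}$ (side condition $p_{0}h_{0}=0$), so any product with a factor from $\operatorname{im}(h_{0})$ dies, while a product of factors all in $\operatorname{im}(j_{0})$ is harmless there. For $h_{0}$ I would use its construction in Subsection \ref{SecHTUE}: writing $S_{A}^{\bullet}\mathcal{K}\simeq S_{A}^{\bullet}\underline{\mathcal{K}}\otimes_{A}S_{A}^{\bullet}\mathcal{Z}$ and $h_{0}=\tfrac{1}{N}h'$ on the $S^{N}\mathcal{Z}$-component, where $h'$ is the derivation extension of $h_{0}|_{\mathcal{K}}$. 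Since $h'$ is an $\odot$-derivation that vanishes on the generators $\operatorname{im}(j_{0})=S_{A}^{\bullet}\underline{\mathcal{K}}$ (because $h_{0}j_{0}=0$) and, on each homogeneous $\mathcal{Z}$-component, vanishes on $\operatorname{im}(h_{0})$ (since $h_{0}^{2}=0$ gives $h'\xi=0$ there), the Leibniz rule forces $h'$, hence $h_{0}$, to vanish on the whole product.

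Third, the induction. Assuming $\gamma_{\ell}=\mathcal{O}(1-\ell)$ with leading parts $\gamma_{\ell}^{[1-\ell]}\in\operatorname{im}(j_{0})\cup\operatorname{im}(h_{0})$ for $\ell<k$, the recursion of Theorem \ref{HTT} yields $\beta_{k}^{[2-k]}=\sum_{\ell+m=k}\pm\,\gamma_{\ell}^{[1-\ell]}\odot\gamma_{m}^{[1-m]}$, a sum of products of the annihilated type, whence $\beta_{k}=\mathcal{O}(2-k)$. Then $\gamma_{k}=h\beta_{k}$ is a priori $\mathcal{O}(2-k)$, but its top component $h_{0}\beta_{k}^{[2-k]}$ vanishes by the annihilation fact, so $\gamma_{k}=\mathcal{O}(1-k)$; and $\operatorname{im}(h)\subset\operatorname{im}(h_{0})$ shows every component of $\gamma_{k}$ (for $k\ge2$) lies in $\operatorname{im}(h_{0})$. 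For $\alpha_{k}=p\beta_{k}$ the top component $p_{0}\beta_{k}^{[2-k]}$ vanishes when $k\ge3$, each product carrying a factor $\gamma_{\ge2}^{[\cdot]}\in\operatorname{im}(h_{0})\subset\ker p_{0}$, giving $\alpha_{k}=\mathcal{O}(1-k)$; for $k=2$ the surviving $p_{0}(j_{0}\square_{1}\odot j_{0}\square_{2})$ shows $\alpha_{2}=\mathcal{O}(0)$.

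Finally, I would read off the order-$(1-k)$ parts. One has $\gamma_{k}^{[1-k]}=h_{0}\beta_{k}^{[1-k]}+h^{[-1]}\beta_{k}^{[2-k]}$, and the second summand is $h_{0}t^{[-1]}h_{0}\beta_{k}^{[2-k]}=0$ (using $h^{[-1]}=h_{0}t^{[-1]}h_{0}$ from (\ref{hi}) and the annihilation fact). In $\beta_{k}^{[1-k]}$ the two $\odot$-type contributions $\gamma_{\ell}^{[-\ell]}\odot\gamma_{m}^{[1-m]}$ and $\gamma_{\ell}^{[1-\ell]}\odot\gamma_{m}^{[-m]}$ are again killed by $h_{0}$, their factors lying in $\operatorname{im}(j_{0})\cup\operatorname{im}(h_{0})$ (here one uses $\gamma_{\ell}^{[-\ell]}\in\operatorname{im}(h_{0})$, which follows from $\operatorname{im}(h)\subset\operatorname{im}(h_{0})$ and $\operatorname{im}(j-j_{0})\subset\operatorname{im}(h_{0})$), leaving exactly $h_{0}\varepsilon_{k}$; the same two terms are killed by $p_{0}$, and $p^{[-1]}=0$, so $\alpha_{k}^{[1-k]}=p_{0}\varepsilon_{k}$. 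The main obstacle is precisely the annihilation fact in the case where \emph{both} factors lie in $\operatorname{im}(h_{0})$, which first occurs at $k=4$: the bare side conditions $h_{0}^{2}=0$ and $h_{0}j_{0}=0$ do not suffice, and one must invoke the explicit derivation model $h_{0}=\tfrac{1}{N}h'$ of Subsection \ref{SecHTUE} to carry it through.
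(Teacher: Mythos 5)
Your proof is correct and follows essentially the same induction through the order filtration as the paper's own proof: the same base case, the same use of the Perturbation Lemma formulas for $j$, $h$, $p$, the side conditions, and the $S_{A}^{\bullet}\underline{\mathcal{K}}$-linearity of $h_{0}$. You are in fact more careful than the paper at the one delicate point: the paper's proof simply asserts the vanishing $h_{0}\bigl(h_{0}\varepsilon_{\ell}\odot h_{0}\varepsilon_{m}\bigr)=0$ when computing $\gamma_{k}^{[2-k]}$, whereas you correctly observe that when \emph{both} factors lie in $\operatorname{im}(h_{0})$ (first needed at $k=4$) this does not follow from the side conditions $h_{0}^{2}=0$, $h_{0}j_{0}=0$ alone, and you supply the missing justification via the derivation model $h_{0}=\tfrac{1}{N}h'$ of Subsection \ref{SecHTUE}, i.e., $(h')^{2}=0$ and the Leibniz rule.
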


\begin{proof}
The two parts of the proposition can be checked simultaneously by induction on
$k$. Indeed, $\gamma_{1}=-j=-j_{0}+\mathcal{O}(-1)$, $\beta_{2}=j(-)\circ
j(-)=j_{0}(-\odot-)+\mathcal{O}(-1)$, and $\alpha_{2}=(-\odot-{}%
)+\mathcal{O}(-1)$ (where I used that $p_{0}$ preserves the product $\odot$).
Moreover,
\[
\gamma_{2}=h\beta_{2}=h(j(-)\circ j(-)),
\]
so that
\[
\gamma_{2}^{[0]}=h_{0}j_{0}(-\odot-)=0.
\]
Thus, compute
\begin{align*}
\gamma_{2}^{[-1]} &  =h^{[-1]}j_{0}(-\odot-)+h_{0}(j^{[-1]}(-)\odot
j_{0}(-)+j_{0}(-)\circledast j_{0}(-)+j_{0}(-)\odot j^{[-1]}(-))\\
&  =h_{0}(j_{0}(-)\circledast j_{0}(-))
\end{align*}
where I used formulas (\ref{hi}), (\ref{j-1}). Now,
\[
\beta_{k}=\sum_{\ell+m=k}(-)^{\ell-1}\gamma_{\ell}(-)\circ\gamma_{m}(-)
\]
with $\gamma_{\ell}=\mathcal{O}(1-\ell)$ and $\gamma_{m}=\mathcal{O}(1-m)$ by
induction hypothesis. Therefore, it is immediately seen that $\beta
_{k}=\mathcal{O}(2-k)$, and
\[
\beta_{k}^{[2-k]}=\sum_{\ell+m=k}(-)^{\ell-1}\gamma_{\ell}^{[1-\ell]}%
(-)\odot\gamma_{m}^{[1-m]}(-),
\]
so that
\[
\gamma_{k}=h\beta_{k}=\mathcal{O}(2-k).
\]
But
\begin{align*}
\gamma_{k}^{[2-k]} &  =\sum_{\ell+m=k}(-)^{\ell-1}h_{0}(\gamma_{\ell}%
^{[1-\ell]}(-)\odot\gamma_{m}^{[1-m]}(-))\\
&  =\sum_{\ell+m=k}(-)^{\ell-1}h_{0}(h_{0}\varepsilon_{\ell}(-)\odot
h_{0}\varepsilon_{m}(-))\\
&  =0.
\end{align*}
Now, compute
\[
\beta_{k}^{[1-k]}=\sum_{\ell+m=k}(-)^{\ell-1}(\gamma_{\ell}^{[-\ell]}%
(-)\odot\gamma_{m}^{[1-m]}(-)+\gamma_{\ell}^{[1-\ell]}(-)\circledast\gamma
_{m}^{[1-m]}(-)+\gamma_{\ell}^{[1-\ell]}(-)\odot\gamma_{m}^{[-m]}(-)),
\]
and%
\[
\gamma_{k}^{[1-k]}=h^{[-1]}\beta_{k}^{[2-k]}+h_{0}\beta_{k}^{[1-k]}=h_{0}%
\beta_{k}^{[1-k]}=h_{0}\varepsilon_{k},
\]
where I used (\ref{hi}).

Finally, compute
\[
\alpha_{k}=p\beta_{k}=\mathcal{O}(2-k).
\]
But
\[
\alpha_{k}^{[2-k]}=p_{0}\beta_{k}^{[2-k]}=\sum_{\ell+m=k}(-)^{\ell-1}%
p_{0}\gamma_{\ell}^{[1-\ell]}(-)\odot p_{0}\gamma_{m}^{[1-m]}(-)=0,
\]
where I used the side condition $p_{0}h_{0}=0$, and
\[
\alpha_{k}^{[1-k]}=p^{[-1]}\beta_{k}^{[2-k]}+p_{0}\beta_{k}^{[1-k]}=p_{0}%
\beta_{k}^{[1-k]}=p_{0}\varepsilon_{k},
\]
where I used the above lemma and the side condition $p_{0}h_{0}=0$ again.
\end{proof}

In view of the above proposition, a formula for $\circledast$ is enough to get
inductive formulas for the $\alpha_{k}^{[1-k]}$'s. These formulas, which I
compute in the proof of the next lemma, actually show that $\alpha_{k}%
^{[1-k]}=0$ for $k>3$, as announced.

Now on put
\[
\mathcal{S}_{i,j,\ell}:=\overline{\Lambda}\otimes_M\Lambda^{i}C\mathfrak{X}%
\otimes_M S^{j}C\mathfrak{X}\otimes_M S^{\ell}\overline{\mathfrak{X}}%
\subset\mathcal{S}(\overline{\Lambda})
\]

\begin{lemma}
Let $\square_{1}\in\mathcal{S}_{r,0,\ell}$ and $\square_{2}\in\mathcal{S}%
_{s,0,m}$, then
\begin{align*}
\square_{1}\circledast\square_{2}  &  \in\mathcal{S}_{r+s,1,\ell
+m-2}+\mathcal{S}_{r+s,0,\ell+m-1}+\mathcal{S}_{r+s-1,0,\ell+m}\\
h_{0}(\square_{1}\circledast\square_{2})  &  \in\mathcal{S}_{r+s+1,0,m+\ell
-2}\\
p_{0}(\square_{1}\circledast\square_{2})  &  \in\left\{
\begin{array}
[c]{cc}%
\overline{\Lambda}\otimes_M S^{\ell+m-1}\overline{\mathfrak{X}} & \text{if
}r+s=0\\
\overline{\Lambda}\otimes_M S^{\ell+m}\overline{\mathfrak{X}} & \text{if
}r+s=1\\
0 & \text{if }r+s>1
\end{array}
\right.
\end{align*}

\end{lemma}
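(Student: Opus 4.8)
The plan is to read off $\circledast={\circ}^{[-1]}$ directly from the $\mathsf{PBW}$ normal form of a composition, and then apply the explicit descriptions of $h_0$ and $p_0$. First I would use the isomorphism $\mathsf{PBW}$ to write $\square_1=\omega_1\,i_{X_1}\cdots i_{X_r}\nabla_{Q_1}$ and $\square_2=\omega_2\,i_{Y_1}\cdots i_{Y_s}\nabla_{Q_2}$, with $X_a,Y_b\in C\mathfrak{X}$, $Q_1\in S^{\ell}\overline{\mathfrak{X}}$, $Q_2\in S^{m}\overline{\mathfrak{X}}$; the hypotheses $\square_1\in\mathcal{S}_{r,0,\ell}$, $\square_2\in\mathcal{S}_{s,0,m}$ mean the middle $S^{\bullet}C\mathfrak{X}$ slot is absent, so $\nabla_{Q_1},\nabla_{Q_2}$ involve only normal derivatives $\nabla_{V_\alpha}$. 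In the product $\square_1\circ\square_2$ the leading term $\odot$ is obtained by moving all interior products to the left and all derivatives to the right \emph{ignoring} commutators, landing in $\mathcal{S}_{r+s,0,\ell+m}$; the operation $\circledast$ collects precisely the terms in which a single reordering commutator is used once, and each such commutator lowers the order by exactly one.

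Next I would enumerate the reorderings that lower the order by one. Moving $\nabla_{Q_1}$ to the right past the coefficient $\omega_2$ and past the interior products $i_{Y_b}$ uses one of
\[
[\nabla_{V_\alpha},\omega]=\nabla_{V_\alpha}\omega,\qquad [\nabla_{V_\alpha},i_Y]=i_{\nabla_{V_\alpha}Y},\quad Y\in C\mathfrak{X},
\]
where $\nabla_{V_\alpha}\omega\in\overline{\Lambda}$ and, since $\nabla$ restricts to $C\mathfrak{X}$, $\nabla_{V_\alpha}Y\in C\mathfrak{X}$; each consumes one normal derivative while preserving the number of interior products, hence lands in $\mathcal{S}_{r+s,0,\ell+m-1}$. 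Normal-ordering $\nabla_{Q_1}$ against $\nabla_{Q_2}$ uses
\[
[\nabla_{V_\alpha},\nabla_{V_\beta}]=\nabla_{[V_\alpha,V_\beta]}+R^{\nabla}(V_\alpha,V_\beta)=\nabla_{R^{i}_{\alpha\beta}\partial_i}+R^{\nabla}(V_\alpha,V_\beta),
\]
by (\ref{18}): its first-order part replaces two normal derivatives by one $C$-direction derivative $\nabla_{R^{i}_{\alpha\beta}\partial_i}$, landing in $\mathcal{S}_{r+s,1,\ell+m-2}$, while the zeroth-order curvature $R^{\nabla}(V_\alpha,V_\beta)$ drops the order by two and so contributes only to $\mathcal{O}(-2)$. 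Finally, moving an interior product $i_{X_a}$ past $\omega_2$ uses $i_X\circ\omega=(i_X\omega)\pm\omega\circ i_X$ and contracts $X_a$ into the coefficient, consuming one interior product and landing in $\mathcal{S}_{r+s-1,0,\ell+m}$. Summing these three families yields the first claimed inclusion; iterated commutators and the curvature term are of order $\le -2$ and hence are excluded from $\circledast$.

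It then remains to apply $h_0$ and $p_0$. Recall that $h_0$ is $S^{\bullet}_{\overline{\Lambda}\otimes_M\overline{\mathfrak{X}}}$-linear, annihilates interior products and normal derivatives, and sends a $C$-direction derivation $\nabla_V$ ($V\in C\mathfrak{X}$) to $\pm i_V$ (this follows from $h_0(i_U+\overline{L}_V+\overline{L}_W)=(-)^{\Delta}i_V$ together with $\overline{L}_V=\nabla_V+i_{\nabla V}$); hence on $\mathcal{S}_{i,j,\ell}$ it converts one $C$-direction factor into an interior product, giving $\mathcal{S}_{i+1,j-1,\ell}$, and vanishes when $j=0$. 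Applying this to the three summands, only $\mathcal{S}_{r+s,1,\ell+m-2}$ survives, mapping into $\mathcal{S}_{r+s+1,0,\ell+m-2}$, which is the second formula. For $p_0$ I would use that it is the projection onto $\mathcal{S}_{0,0,\bullet}=\overline{\Lambda}\otimes_M S^{\bullet}\overline{\mathfrak{X}}$, killing any summand of positive $\Lambda^{\bullet}C\mathfrak{X}$ or $S^{\bullet}C\mathfrak{X}$ degree: the $\mathcal{S}_{r+s,1,\ell+m-2}$ term dies, the $\mathcal{S}_{r+s,0,\ell+m-1}$ term survives only when $r+s=0$, and the $\mathcal{S}_{r+s-1,0,\ell+m}$ term survives only when $r+s=1$, reproducing exactly the three cases of the third formula. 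The hard part will be the sign and combinatorial bookkeeping when $\nabla_{Q_1},\nabla_{Q_2}$ are higher symmetric powers: one must sum over which single factor enters each commutator and verify, via the graded Leibniz rule, both that no further order-$(-1)$ contribution is overlooked and that the curvature and iterated-commutator terms genuinely fall into $\mathcal{O}(-2)$.
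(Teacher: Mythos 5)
Your proof is correct and follows essentially the same route as the paper's: write both operators in PBW normal form, normal-order the composition, note that the order $-1$ part comes exactly from the three single-commutator families (a normal derivative $\nabla_{\alpha}$ hitting the coefficient or an interior product, the first-order part $R_{\alpha\beta}^{i}\nabla_{i}$ of $[\nabla_{\alpha},\nabla_{\beta}]$, and an interior product contracting the coefficient), and then apply the explicit descriptions of $h_{0}$ and $p_{0}$ on the spaces $\mathcal{S}_{i,j,\ell}$. The only difference is one of explicitness: the paper performs the computation in local components and records the precise coefficients (its formula (\ref{B}), used afterwards to compute $\alpha_{2}^{[-1]}$ and $\alpha_{3}^{[-2]}$), while you establish only the stated memberships, which is all the lemma asserts.
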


\begin{proof}
The operators $\square_{1}$ and $\square_{2}$ are locally of the form
\begin{align*}
\square_{1}  &  =\Phi^{i_{1}\cdots i_{r}|\alpha_{1}\cdots\alpha_{\ell}%
}I_{i_{1}\cdots i_{r}}\nabla_{\alpha_{1}}\cdots\nabla_{\alpha_{\ell}}, \\
\square_{2}  &  =\Psi^{j_{1}\cdots j_{s}|\beta_{1}\cdots\beta_{m}}%
I_{j_{1}\cdots j_{s}}\nabla_{\beta_{1}}\cdots\nabla_{\beta_{m}}.
\end{align*}
Then
\begin{align*}
&  \square_{1}\circ\square_{2}\\
&  =\Phi^{i_{1}\cdots i_{r}|\alpha_{1}\cdots\alpha_{\ell}}\Psi^{j_{1}\cdots
j_{s}|\beta_{1}\cdots\beta_{m}}I_{i_{1}\cdots i_{r}j_{1}\cdots j_{s}}%
\nabla_{(\alpha_{1}}\cdots\nabla_{\alpha_{\ell}}\nabla_{\beta_{1}}\cdots
\nabla_{\beta_{m})}\\
&  \quad\ +\Phi^{i_{1}\cdots i_{r}|\alpha_{1}\cdots\alpha_{\ell}}\Psi
^{j_{1}\cdots j_{s}|\beta_{1}\cdots\beta_{m}}I_{i_{1}\cdots i_{r}j_{1}\cdots
j_{s}}(\nabla_{(\alpha_{1}}\cdots\nabla_{\alpha_{\ell})}\nabla_{(\beta_{1}%
}\cdots\nabla_{\beta_{m})})^{[\ell+m-1]}\\
&  \quad\ +\ell\Phi^{i_{1}\cdots i_{r}|\alpha_{1}\cdots\alpha_{\ell}}%
\nabla_{\alpha_{1}}\Psi^{j_{1}\cdots j_{s}|\beta_{1}\cdots\beta_{m}}%
I_{i_{1}\cdots i_{r}j_{1}\cdots j_{s}}\nabla_{(\alpha_{2}}\cdots\nabla
_{\alpha_{\ell}}\nabla_{\beta_{1}}\cdots\nabla_{\beta_{m})}\\
&  \quad\ +r\Phi^{i_{1}\cdots i_{r}|\alpha_{1}\cdots\alpha_{\ell}}I_{i_{1}%
}\Psi^{j_{1}\cdots j_{s}|\beta_{1}\cdots\beta_{m}}I_{i_{2}\cdots i_{r}%
j_{1}\cdots j_{s}}\nabla_{(\alpha_{1}}\cdots\nabla_{\alpha_{\ell}}%
\nabla_{\beta_{1}}\cdots\nabla_{\beta_{m})}+O(\ell+m-2).
\end{align*}
It remains to compute
\[
(\nabla_{(\alpha_{1}}\cdots\nabla_{\alpha_{\ell})}\nabla_{(\beta_{1}}%
\cdots\nabla_{\beta_{m})})^{[\ell+m-1]}.
\]
Let $A^{\alpha_{1}\cdots\alpha_{\ell}|\beta_{1}\cdots\beta_{m}}$ be symmetric
in the $\alpha$'s and the $\beta$'s separately. Then
\begin{align*}
&  A^{\alpha_{1}\cdots\alpha_{\ell}|\beta_{1}\cdots\beta_{m}}(\nabla
_{(\alpha_{1}}\cdots\nabla_{\alpha_{\ell})}\nabla_{(\beta_{1}}\cdots
\nabla_{\beta_{m})})^{[\ell+m-1]}\\
&  =A^{\alpha_{1}\cdots\alpha_{\ell}|\beta_{1}\cdots\beta_{m}}(\nabla
_{\alpha_{1}}\cdots\nabla_{\alpha_{\ell}}\nabla_{\beta_{1}}\cdots\nabla
_{\beta_{m}})^{[\ell+m-1]}\\
&  =\dfrac{m}{m+1}A^{\alpha\alpha_{1}\cdots\alpha_{\ell-1}|\beta\beta
_{1}\cdots\beta_{m-1}}R_{\alpha\beta}^{i}\nabla_{i}\nabla_{(\alpha_{1}}%
\cdots\nabla_{\alpha_{\ell-1}}\nabla_{\beta_{1}}\cdots\nabla_{\beta_{m-1})}.
\end{align*}

I conclude that
\begin{align}
\square_{1}\circledast\square_{2}  &  =\dfrac{m}{m+1}R_{\alpha\beta}^{i}%
\Phi^{i_{1}\cdots i_{r}|\alpha\alpha_{1}\cdots\alpha_{\ell-1}}\Psi
^{j_{1}\cdots j_{s}|\beta\alpha_{\ell}\cdots\beta_{\ell+m-2}}i_{i_{1}\cdots
i_{r}j_{1}\cdots j_{s}}\nabla_{i}\nabla_{(\alpha_{1}}\cdots\nabla
_{\alpha_{\ell+m-2})}\nonumber\\
&  \quad\ +\ell\Phi^{i_{1}\cdots i_{r}|\alpha\alpha_{1}\cdots\alpha_{\ell-1}%
}\nabla_{\alpha}\Psi^{j_{1}\cdots j_{s}|\alpha_{\ell}\cdots\alpha_{\ell+m-1}%
}i_{i_{1}\cdots i_{r}j_{1}\cdots j_{s}}\nabla_{(\alpha_{1}}\cdots
\nabla_{\alpha_{\ell+m-1})}\nonumber\\
&  \quad\ +r\Phi^{ii_{1}\cdots i_{r-1}|\alpha_{1}\cdots\alpha_{\ell}}i_{i}%
\Psi^{j_{1}\cdots j_{s}|\alpha_{\ell+1}\cdots\alpha_{\ell+m}}i_{i_{1}\cdots
i_{r-1}j_{1}\cdots j_{s}}\nabla_{(\alpha_{1}}\cdots\nabla_{\alpha_{\ell+m})}
\label{B}%
\end{align}

\end{proof}

\begin{corollary}
Let $\square_{1},\ldots,\square_{k}\in\overline{\Lambda}\otimes_M\overline
{\mathcal{D}}$ with $\square_{i}$ being of order $\ell_{i}$, $i=1,\ldots,k$.
Put $\ell:=\ell_{1}+\cdots+\ell_{k}$. Then
\begin{align*}
\gamma_{k}^{[1-k]}(\square_{1},\ldots,\square_{k}) &  \in\mathcal{S}%
_{k-1,0,\ell-2k+2},\\
\varepsilon_{k}(\square_{1},\ldots,\square_{k}) &  \in\mathcal{S}%
_{k-2,1,\ell-2k+2}+\mathcal{S}_{k-2,0,\ell-2k+3}+\mathcal{S}_{k-3,0,\ell
-2k+4},\\
\alpha_{k}^{[1-k]}(\square_{1},\ldots,\square_{k}) &  \in\left\{
\begin{array}
[c]{cc}%
\overline{\Lambda}\otimes_M S^{\ell-1}\overline{\mathfrak{X}} & \text{if }k=2\\
\overline{\Lambda}\otimes_M S^{\ell-2}\overline{\mathfrak{X}} & \text{if }k=3\\
0 & \text{if }k>3
\end{array}
\right.  ,
\end{align*}
$k>1$.
\end{corollary}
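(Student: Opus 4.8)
The plan is to prove the three containments simultaneously by induction on $k$, feeding the recursive formulas of the preceding proposition for $\varepsilon_k,\gamma_k^{[1-k]},\alpha_k^{[1-k]}$ into the three estimates of the preceding lemma. The key structural observation that makes this work is that every $\gamma_p^{[1-p]}$ lands in a \emph{purely normal} stratum $\mathcal{S}_{\ast,0,\ast}$ (middle index zero); this is exactly the shape required by the hypotheses $\square_1\in\mathcal{S}_{r,0,\ell}$, $\square_2\in\mathcal{S}_{s,0,m}$ of the lemma, so the lemma applies verbatim at each step.

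For the base case, $\gamma_1=-j$ together with $j=j_0+\mathcal{O}(-1)$ gives $\gamma_1^{[0]}=-j_0$; since $j_0$ sends $\overline{\Lambda}\otimes_M S^{\ell_1}\overline{\mathfrak{X}}$ into $\mathcal{S}_{0,0,\ell_1}$, the estimate $\gamma_k^{[1-k]}\in\mathcal{S}_{k-1,0,\ell-2k+2}$ already holds at $k=1$. For the inductive step I substitute the induction hypotheses into $\varepsilon_k=-\sum_{p+q=k}\pm\,\gamma_p^{[1-p]}\circledast\gamma_q^{[1-q]}$. Writing $L_p:=\ell_1+\cdots+\ell_p$ and $L_q':=\ell_{p+1}+\cdots+\ell_{p+q}$, so that $L_p+L_q'=\ell$ and $p+q=k$, the two factors lie in $\mathcal{S}_{p-1,0,L_p-2p+2}$ and $\mathcal{S}_{q-1,0,L_q'-2q+2}$. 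Applying the $\circledast$-estimate of the lemma with $r=p-1$, $s=q-1$ and inner degrees $L_p-2p+2$, $L_q'-2q+2$ places each summand in $\mathcal{S}_{k-2,1,\ell-2k+2}+\mathcal{S}_{k-2,0,\ell-2k+3}+\mathcal{S}_{k-3,0,\ell-2k+4}$, since $r+s=k-2$ and the inner degrees add up to $\ell-2k+4$; this is the $\varepsilon_k$ claim. Applying $h_0$ summand by summand via the lemma's $h_0$-estimate lands in $\mathcal{S}_{(r+s)+1,0,\ell-2k+2}=\mathcal{S}_{k-1,0,\ell-2k+2}$, closing the induction on $\gamma_k^{[1-k]}$. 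Finally $\alpha_k^{[1-k]}=p_0\varepsilon_k$, where each summand has $r+s=k-2$: the $p_0$-trichotomy of the lemma then returns $\overline{\Lambda}\otimes_M S^{\ell-1}\overline{\mathfrak{X}}$ for $k=2$, $\overline{\Lambda}\otimes_M S^{\ell-2}\overline{\mathfrak{X}}$ for $k=3$, and $0$ as soon as $k>3$ (precisely because then $r+s=k-2>1$).

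Since the analytic content is already packaged in the lemma, the argument is essentially disciplined index bookkeeping rather than new computation. The only points demanding attention are the degenerate strata --- e.g.\ $\mathcal{S}_{-1,\ldots}=0$, which is what collapses the third term when $k=2$ --- and checking that $j_0$, and hence inductively every $\gamma_p^{[1-p]}$, produces no $S^{\bullet}C\mathfrak{X}$-factor, keeping the middle index zero so that the lemma stays applicable. This middle-index constraint is exactly what forces $\alpha_k^{[1-k]}=0$ for $k>3$. I expect this bookkeeping, together with tracking the degenerate cases, to be the main (if mild) obstacle.
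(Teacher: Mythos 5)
Your proof is correct and is precisely the argument the paper intends: its own proof of this corollary reads simply ``Immediate, by induction on $k$,'' and your write-up supplies exactly that induction, feeding the recursion $\varepsilon_k=-\sum\pm\,\gamma_p^{[1-p]}\circledast\gamma_q^{[1-q]}$, $\gamma_k^{[1-k]}=h_0\varepsilon_k$, $\alpha_k^{[1-k]}=p_0\varepsilon_k$ into the lemma's three estimates with $r+s=k-2$. Your two side observations --- that $j_0$ and $h_0$ keep the middle index zero so the lemma stays applicable, and that degenerate strata such as $\mathcal{S}_{-1,0,\bullet}=0$ handle the small-$k$ cases --- are exactly the points that make the bookkeeping close.
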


\begin{proof}
Immediate, by induction on $k.$
\end{proof}

Now, compute $\alpha_{k}^{[1-k]}$, for $k=1,2,3$. Let $\square_{1},\square
_{2},\square_{3}\in\overline{\Lambda}\otimes_M\overline{\mathcal{D}}$ be locally
given by
\[
\square_{i}=\Phi_{i}^{\alpha_{1}\cdots\alpha_{r}}V_{\alpha_{1}\cdots\alpha
_{r}},\quad i=1,2,3.
\]
First of all,
\[
\alpha_{2}^{[0]}(\square_{1},\square_{2})=\Phi_{1}^{\alpha_{1}\cdots\alpha
_{r}}\Phi_{2}^{\alpha_{r+1}\cdots\alpha_{r+s}}V_{\alpha_{1}\cdots\alpha_{r+s}%
}.
\]
Moreover, using Formula (\ref{B}), it is easy to see that
\begin{align*}
\alpha_{2}^{[-1]}(\square_{1},\square_{2})  &  =r\Phi_{1}^{\alpha\alpha
_{1}\cdots\alpha_{r-1}}\nabla_{\alpha}\Phi_{2}^{\alpha_{r}\cdots\alpha
_{r+s-1}}V_{\alpha_{1}\cdots\alpha_{r+s-1}}\\
\alpha_{3}^{[-2]}(\square_{1},\square_{2},\square_{3})  &  =\dfrac{2t}%
{t+1}R_{\alpha\beta}^{i}\Phi_{1}^{\alpha\alpha_{1}\cdots\alpha_{r-1}}\Phi
_{2}^{\beta\alpha_{r}\cdots\alpha_{r+s-2}}I_{i}\Phi_{3}^{\alpha_{r+s-1}%
\cdots\alpha_{r+s+t-2}}V_{\alpha_{1}\cdots\alpha_{r+s+t-2}}%
\end{align*}
which are duly consistent with formulas in \cite{vi12}.

\begin{remark}
Notice that the natural $\mathcal{D}(\overline{\Lambda})$-module structure on
$\overline{\Lambda}$ can be transferred along the contraction data $(p,j,h)$
as well. Indeed, $\overline{\Lambda}$ is actually a DG module over
$\mathcal{D}(\overline{\Lambda})$ with differential $\overline{d}%
:\overline{\Lambda}\longrightarrow\overline{\Lambda}$. Moreover, this DG
module structure (and the DG algebra structure on $\mathcal{D}(\overline
{\Lambda})$) can be encoded in a DG algebra structure on $\mathcal{D}%
(\overline{\Lambda})\oplus\overline{\Lambda}$ given by
\[
(\square_{1},\omega_{1})(\square_{2},\omega_{2}):=(\square_{1}\circ\square
_{2},\square_{1}\omega_{2}),\quad(\square_{i},\omega_{i})\in\mathcal{D}%
(\overline{\Lambda})\oplus\overline{\Lambda},\quad i=1,2,
\]
with differential $\delta^{\oplus}:=\delta_{\mathcal{D}}\oplus\overline{d}$.
Similarly, consider the complex $((\overline{\Lambda}\otimes_M\overline
{\mathcal{D}})\oplus\overline{\Lambda},\overline{d}{}^{\oplus})$ where
$\overline{d}{}^{\oplus}:=\overline{d}{}_{\mathcal{D}}\oplus\overline{d}$.
There are obvious contraction data $(p^{\oplus},j^{\oplus},h^{\oplus})$ of
$(\mathcal{D}(\overline{\Lambda})\oplus\overline{\Lambda},$ $\delta^{\oplus})$
over $((\overline{\Lambda}\otimes_M\overline{\mathcal{D}})\oplus\overline{\Lambda
},\overline{d}{}^{\oplus})$. Namely,
\[
p^{\oplus}:=p\oplus\mathrm{id},\quad j^{\oplus}:=j\oplus\mathrm{id},\quad
h^{\oplus}:=h\oplus0.
\]
Accordingly, there is an $A_{\infty}$-algebra structure $\{\alpha_{k}^{\oplus
},\ k\in\mathbb{N}\}$ in $\overline{\Lambda}\otimes_M\overline{\mathcal{D}%
}\oplus\overline{\Lambda}$. By construction,
\[
\alpha_{k}^{\oplus}((\square_{1},\omega_{1}),\ldots,(\square_{k},\omega
_{k}))=\alpha_{k}(\square_{1},\ldots,\square_{k})+\alpha_{k}^{\oplus}%
(\square_{1},\ldots,\square_{k-1},\omega_{k}),
\]
$(\square_{i},\omega_{i})\in\mathcal{D}(\overline{\Lambda})\oplus
\overline{\Lambda}$, $i=1,\ldots,k$. Therefore, if one puts
\[
\mu_{k}(\square_{1},\ldots,\square_{k-1}|\omega):=\alpha_{k}^{\oplus}%
(\square_{1},\ldots,\square_{k-1},\omega),
\]
then $\{\mu_{k},\ k\in\mathbb{N}\}$ is an $A_{\infty}$-module structure on
$\overline{\Lambda}$, over the $A_{\infty}$-algebra $\overline{\Lambda}%
\otimes_M\overline{\mathcal{D}}$. It is easy to see that, since $h^{\oplus}%
\rho=0$ for all $\rho\in\overline{\Lambda}$, then the $\mu$'s are simply given
by
\begin{align*}
\mu_{1} &  =\overline{d}\\
\mu_{k}(\square_{1},\ldots,\square_{k-1}|\omega) &  =(-)^{k-1}\gamma
_{k}(\square_{1},\ldots,\square_{k-1})\omega,\quad k\geq2.
\end{align*}

\end{remark}

\section*{Conclusions}

I proved that the $LR_{\infty}$-algebra $(\overline{\Lambda}\otimes_M
\overline{\mathfrak{X}},\mathscr{L})$ of a foliation \cite{vi12} can be
actually extended in a natural way to an $A_{\infty}$-algebra $(\overline
{\Lambda}\otimes_M\overline{\mathcal{D}},\mathscr{A})$ of longitudinal
form-valued normal differential operators. This can be done via purely
geometric data, namely a distribution complementary to the characteristic
distribution and a connection (of a suitable kind). Notice that $(\overline
{\Lambda}\otimes_M\overline{\mathfrak{X}},\mathscr{L})$ can be interpreted (to
some extent) as the (derived) Lie-Rinehart algebra of vector fields on the
space $\boldsymbol{P}$ of integral manifolds. Similarly, it is natural to
interpret $(\overline{\Lambda}\otimes_M\overline{\mathcal{D}},\mathscr{A})$ as
the (derived) associative algebra of differential operators on $\boldsymbol{P}$.
In this respect, it is tempting to conjecture that $(\overline{\Lambda}%
\otimes_M\overline{\mathcal{D}},\mathscr{A})$ is a universal enveloping SH
algebra of $(\overline{\Lambda}\otimes_M\overline{\mathfrak{X}},\mathscr{L})$.
However, the theory of universal enveloping of $LR_{\infty}$-algebras (or
$L_{\infty}$-algebroids) is not yet available and developing this research
line goes beyond the scopes of this paper. Here, I just notice that
$(\overline{\Lambda}\otimes_M\overline{\mathcal{D}},\mathscr{A})$ is indeed a
(possibly non universal) \emph{enveloping SH algebra} of $(\overline{\Lambda
}\otimes_M\overline{\mathfrak{X}},\mathscr{L})$ in the following sense. The
inclusion $\overline{\Lambda}\otimes_M\overline{\mathfrak{X}}\longrightarrow
\overline{\Lambda}\otimes_M\overline{\mathcal{D}}$ can be trivially extended to
a morphism $J:\overline{\Lambda}\otimes_M\overline{\mathfrak{X}}\longrightarrow
\overline{\Lambda}\otimes_M\overline{\mathcal{D}}$ of the $L_{\infty}$-algebra
$(\overline{\Lambda}\otimes_M\overline{\mathfrak{X}},\mathscr{L})$ and the
$L_{\infty}$-algebra obtained by skew-symmetrization of operations in
$\mathscr{A}$, simply putting $J_{k}=0$ for $k>1$. Then, it is easy to see,
using the explicit fomulas for brackets in $\mathscr{L}$ \cite{vi12}, that
\[
\nu_{k}(Z_{1},\ldots,Z_{k-1}|\omega)=\ (\mathsf{A}\alpha_{k})(Z_{1}%
,\ldots,Z_{k-1},\omega),\quad\omega\in\overline{\Lambda},\quad Z_{i}%
\in\overline{\Lambda}\otimes_M\overline{\mathfrak{X}},
\]
$i=1,\ldots,k-1$, which specializes (\ref{23}) to the present simple case
where $j$ is an inclusion and $J_{k}=0$ for $k>1$.

\end{document}